\documentclass[12pt]{article}

\usepackage[utf8]{inputenc}
\usepackage{epsfig}
\usepackage{graphicx}
\usepackage{amsfonts}
\usepackage{amsmath}
\usepackage{enumerate}
\usepackage{lineno}
\usepackage{setspace}
\usepackage{amssymb}
\usepackage{hyperref}
\usepackage{geometry}

\geometry{a4paper,left=2cm,right=2cm, top=2cm, bottom=2cm}

\onehalfspace

\newtheorem{theorem}{Theorem}[section]
\newtheorem{corollary}[theorem]{Corollary}
\newtheorem{proposition}[theorem]{Proposition}
\newenvironment{proof}{\par \noindent \textit{Proof}.}{\hfill $\Box$\newline}

\newcommand{\valI}[2][G^2]{val_i(#2,#1)}
\newcommand{\valE}[2][G^2]{val_e(#2,#1)}
\newcommand{\valT}[3][G^1]{val(#1,#2,#3)}
\newcommand{\val}[1]{val(G^1,G^2,#1)}

\newcommand{\ctwo}{2 \lfloor \frac{m}{2} \rfloor}
\newcommand{\cthreeC}{m + \max\{2, \lfloor \frac{m-2}{2} \rfloor\}}
\newcommand{\cthreeCC}{ 3 \lceil \frac{m}{2} \rceil}
\newcommand{\cthreeCP}{ 3 \lfloor \frac{m}{2} \rfloor}
\newcommand{\cthreeP}{m + \lfloor \frac{m-2}{2} \rfloor}
\newcommand{\cthreeCPtwo}{\max\{2, \lfloor \frac{m-2}{2} \rfloor\}}
\newcommand{\mtwoovertwo}{\lfloor \frac{m-2}{2} \rfloor}
\newcommand{\cfour}{2m - 1}
\newcommand{\cfiveC}{\max\{m+4, 2m-1\}}
\newcommand{\cfiveP}{2m-1}
\newcommand{\csixC}{2m + 4}
\newcommand{\csixP}{2m + 2}
\newcommand{\csevenC}{3m + 1}
\newcommand{\csevenP}{3m}

\begin{document}

\title{Global Defensive Alliances in the \\ Lexicographic Product of Paths and Cycles}

\author{Rommel M. Barbosa$^1$\thanks{E-mail: \textit{rommel@inf.ufg.br}} \and
Mitre C. Dourado$^2$\thanks{Partially suported by Conselho Nacional de Desenvolvimento Científico e Tecnológico (CNPq, Brazil). E-mail: \textit{mitre@dcc.ufrj.br}} \and
Leila R. S. da Silva$^3$\thanks {Partially suported by Fundação de Amparo à Pesquisa do Estado de Goiás (FAPEG, Brazil). E-mail: \textit{leila.roling@ifgoiano.edu.br}}}

\maketitle

\begin{center}
{\small
$^1$Instituto de Informática, Universidade Federal de Goiás, Goiânia, Brazil \\

$^2$Instituto de Matemática, Universidade Federal do Rio de Janeiro, Rio de Janeiro, Brazil \\

$^3$Instituto Federal Goiano, Campus Mourinhos, Brazil}
\end{center}

\begin{abstract}
A set $S$ of vertices of graph $G$ is a \textit{defensive alliance} of $G$ if for every $v \in S$, it holds $|N[v] \cap S| \geq |N[v]-S|$. An alliance $S$ is called $global$ if it is also a dominating set. In this paper, we determine the exact values of the global defensive alliance number of lexicographic products of path and cycles.
\end{abstract} 

\textit{Mathematical Sciences Classification: 05C76}

\textit{Keywords: Global defensive alliance; lexicographic product; path and cycle.}

\section{Introduction}

We consider only finite, simple, and undirected graphs. Given a graph $G = (V,E)$, {\em open neighborhood} and the {\em closed neighborhood} of a $v \in V$ are denoted by $N(v), N[v]$, respectively. 
Given a set $S \subseteq V$, the subgraph of $G$ induced by $S$ is denoted by $G[S]$. If $v \in S$ and $|N[v] \cap S| \geq |N[v] - S|$, then $v$ is said to be {\em defended in $S$}. We say that $S$ is a {\em defensive alliance} if all vertices of $S$ are defended. Note that if $v$ is defended in $S$, then $|S \cap N(v)| \geq \lfloor \frac{d(v)}{2} \rfloor$.
The set $S$ is a {\em dominating set of $G$} if every vertex of $G$ belongs to $S$ or has a neighbor is $S$. A defensive alliance is {\em global} (GDA) if it is also a dominating set of the graph. The minimum cardinality of a global defensive alliance of $G$ is its {\em global defensive alliance number} and is denoted by $\gamma_a(G)$.

The {\em lexicographic product} of graphs $G^1 = (V_1,E_1)$ and $G^2 = (V_2,E_2)$ is the graph $G = (V,E) = G^1 \circ G^2$ such that $V = V_1 \times V_2$ and $E = \{(u_1,u_2)(v_1,v_2) : (u_1v_1 \in E_1) \mbox{ or } (u_1 = v_1$ \mbox{ and } $u_2v_2 \in E_2)\}$.
Given a graph $F = G^1 \circ G^2$ where the orders of $G^1$ and $G^2$ are $n$ and $m$, respectively, it is clear that $F$ contains $n$ disjoint copies of $G^2$, which will be denoted by $G^2_1, G^2_2, \ldots, G^2_n$. Furthermore, for a set $S \subseteq V(G)$, we will denote by $S_i$ the set $S \cap V(G^2_i)$ for $i \in \{1, \ldots, n\}$ and $s_i = |S_i|$. 

In this work, we present formulas that allow one determining the global defensive alliance number of a graph $F = G^1 \circ G^2$ where $G^1$ and $G^2$ are cycles or paths within a constant number of operations. 
In Section~\ref{sec:charac}, we present a general characterization of $\gamma_a(G^1 \circ G^2)$ for $G^1 \in \{P_n,C_n\}$, $n \geq 3$, and any graph $G^2 \not\simeq K_m$. Such characterization will be useful for the proposed solution presented in next sections.
Section~\ref{sec:first} contains useful properties of minimum global defensive alliances of the lexicographic product of paths and cycles.
In Section~\ref{sec:nsmall}, we present the formulas for $n \leq 7$ while the solution for $n \geq 8$ is given in Section~\ref{sec:nbig}. In Section~\ref{sec:imp}, we explore the homogenous bevavior of $\gamma_a({G^1 \circ G^2})$, when the orders of $G_1$ and $G_2$ change, for obtaining more structural results. The conclusions are in Section~\ref{sec:conclusion}. We finish this section presenting related works.

The definition of alliances in graphs first appeard in~\cite{HHH2003}. Since then many variatons appeared.
The most extensively studied are defensive alliances \cite{HHH2003,Hed2004,Rod2009,Sig2011,Yer2010}, offensive alliances \cite{Fav2004,Rod2006,Sig2009} and powerful or dual alliances \cite{Bri2004, Bri2009, Yer2012}. A more generalized concept of alliance is represented by \textit{k-alliances} \cite{Ber2010,Sha2002, Sha2003, Sha2006,Sig2011}, and  Dourado et al. presented a new definition of alliances, namely, \textit{$(f,g)$-alliances}~\cite{Dou2011}, that generalizes previous concepts. In \cite{Yer2013c}, Yero and Rodr\'{i}guez-Vel\'{a}zquez published a summary of the major results obtained concerning defensive alliances up through $2013$.

Since the decision problems of computing the minimum cardinality of these concepts for general graphs are NP-complete~\cite{Cam2006, Fer2008,Rod2009b}, several studies of alliances in graphs have been developed in graph classes and product of graphs; these advances are described in detail in \cite{Yer2013c,Fer2014}.

Haynes, Hedetniemi and Henning \cite{HHH2003} determined the cardinality of the minimum set that can constitute a global defensive alliance for several classes of graphs and presented some limits on the minimum GDA in cubic, bipartite graphs and trees. 

The initial studies of defensive alliances in Cartesian products were done by Brigham, Dutton and Hedetniemi in \cite{Bri2004}, and several parameters were also presented in \cite{Ber2010,Sig2011,Yer2011} for Cartesian products of graphs for k-alliances.
Following this trend, there is also the work by Chang \textit{et al.} in 2012 \cite{Cha2012}, which presented some upper bounds for Cartesian products between paths and cycles. In 2013, Yero and Rodr\'{i}guez-Vel\'{a}zquez \cite{Yer2013} obtained closed formulas for the GDA number for several classes of Cartesian products of graphs.

\section{Characterization of $\gamma_a(G^1 \circ G^2)$ for $G^1 \in \{C_n,P_n\}$} \label{sec:charac}

In this section, we present a characterization of $\gamma_a(G^1 \circ G^2)$ for $G^1 \in \{P_n,C_n\}$, $n \geq 3$, and any graph $G^2 \not\simeq K_m$. Let $S$ be a GDA of $F = G^1 \circ G^2$. The {\em spectrum of $S$ in $F$}, $spe(S,F)$, is a sequence obtained in the following way. If $G^2 = C_m$ and there is $S_i = 0$, we assume that $S_n = 0$: if $S_i \neq 0$ for $2 \leq i \leq n-1$, then $spe(S,F) = (n)$; otherwise let $i \geq 3$ be the minimum number such that $S_i = 0$. If $S_{i+1} = 0$, then $k_1 = i$; otherwise $k_1 = i-1$. In both cases, $spe(S,F) = (k_1)spe(S',F')$ where $S' = S \cap V(F')$ and $F' = F - (V(G^2_1) \cup  \ldots \cup V(G^2_{k_1}))$. When there is no doubt about which is the graph $F$, we can use $spe(S)$ to represent the spectrum of $S$ in $F$.

We say that a sequence $w = (k_1, \ldots, k_t)$ is {\em feasible} for $G^1 \circ G^2$ for $G^1 \in \{C_n,P_n\}$ and $G^2 \not\simeq K_m$ if there is a GDA $S$ of $G^1 \circ G^2$ whose spectrum is $w$. We denote $max(w) = \underset{i \in t}{max}\{k_i\}$ and say that $w$ is an $n$-sequence if $\underset{1 \leq i \leq t}{k_i} = n$. Observe that we can see $w$ as a $t$-partition $(V_1, \ldots, V_t)$ of $V(F)$ where each part is associated with an element $k_i$ and $F[V_i] \simeq P_{k_i} \circ G^2$. We call each such subgraph by a {\em section of $F$}. If $G^1 \simeq P_n$ and $F_i$ is a section for $i \in \{1,t\}$, then we say that $F_i$ is an {\em external section}; otherwise it is an {\em internal section}.

Given elements $k_i$ and $k_j$ of a sequence $w = (k_1, \ldots, k_t)$ for $j > i$, the sequence formed by the elements that are between $k_i$ and $k_j$ in $w$ will be denoted by $w_{i+1,j-1} = (k_{i+1}, \ldots, k_{j-1})$, the sequence formed by the elements that preceed $k_i$ by $w_{1,i-1} = (k_1, \ldots, k_{i-1})$, and the sequence formed by the elements that succeed $k_j$ by $w_{j+1,t} = (k_{j+1}, \ldots, k_t)$. The concatenation of sequences $w$ and $w'$ will be denoted by $ww'$. If all elements of $w$ are equal, then we can write $w = ([t]k_1)$. This definition allows one to write $w = ([t_1]k_1, \ldots, [t_p]k_p)$, which means that, for $1 \leq i \leq p$, there are $t_i$ consecutive occurrences of $k_i$ and $\underset{1 \leq i \leq p}{t_ik_i} = n$. The feasible sequences are characterized in the following result.

\begin{proposition}
For $G^1 \in \{P_n,C_n\}$ and $G^2 \not\simeq K_m$, a sequence $w = (k_1, \ldots, k_t )$ is feasible for $G^1 \circ G^2$ if and only if  $k_1 \geq 2$, $k_i \geq 3$ for $i \in \{2, \ldots, t\}$, and $\underset{1 \leq i \leq t}{\sum} k_i = n$.
\end{proposition}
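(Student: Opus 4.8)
The plan is to prove the two implications separately, with the heart of the argument being two local obstructions that every global defensive alliance must respect, phrased in terms of the \emph{empty} copies $G^2_i$ (those with $s_i=0$). First I would record two facts valid for any GDA $S$ of $F=G^1\circ G^2$ with $G^1\in\{P_n,C_n\}$. (i) \emph{No three consecutive copies are empty}: if $G^2_{i-1},G^2_i,G^2_{i+1}$ were all empty then every $(i,v)$ lies outside $S$ and all of its neighbours lie in empty copies, so $(i,v)$ is not dominated. (ii) \emph{No nonempty copy has both neighbours empty}: if $G^2_{i-1}$ and $G^2_{i+1}$ are empty and $(i,v)\in S$, then $N[(i,v)]$ is the union of those two copies together with $N_{G^2}[v]$, so $|N[(i,v)]|=2m+|N_{G^2}[v]|$ while $|N[(i,v)]\cap S|=|N_{G^2}[v]\cap S_i|\le m$; being defended would force $2m\ge 2m+|N_{G^2}[v]|$, which is impossible. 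The computation in (ii) is the crux of the whole proposition: it forces every maximal block of nonempty copies lying between two empty copies to have length at least $2$.

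For the necessity direction I would feed (i) and (ii) into the recursive definition of $spe(S,F)$. Since the sections form a partition of the $n$ copies, $\sum_i k_i=n$ is immediate. Because the index $i$ in the definition is chosen with $i\ge 3$, the first section has size $k_1\in\{i-1,i\}\ge 2$. For a later section the recursion guarantees that its first copy is empty and, by (i), its second copy is nonempty; by (ii) the nonempty block starting there must contain at least two copies before the next empty copy, so the next cut occurs at position $\ge 4$ and the section has size $\ge 3$. The one remaining case is the external (last) section of a path whose nonempty block has length $1$, i.e. a single end copy $G^2_n$ with $G^2_{n-1}$ empty: here $|N[(n,v)]|=m+|N_{G^2}[v]|$, so a defended $(n,v)\in S$ needs $|N_{G^2}[v]|\ge m$, i.e. $v$ universal in $G^2$; requiring all of $S_n$ to be universal while still dominating $G^2_n$ forces $G^2\simeq K_m$, which is excluded. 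Hence every section after the first has size at least $3$.

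For the sufficiency direction I would exhibit, for any admissible $w$, a concrete GDA realising it. Partition the copies into consecutive blocks of sizes $k_1,\dots,k_t$. For a path, put \emph{all} vertices of every copy of the first block into $S$; in each later block leave its first copy empty and put all vertices of the remaining $k_j-1\ge 2$ copies into $S$. Domination is clear: each empty copy is flanked by a full copy and no three empty copies occur. Defense also holds, because with copies either full or empty a vertex $u=(i,v)$ in a full copy satisfies $|N[u]\cap S|\ge |N_{G^2}[v]|+m$ whenever at least one neighbouring copy is full, and the conditions $k_1\ge 2$, $k_j\ge 3$ guarantee that every full copy adjacent to an empty copy has its other neighbour full. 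Finally, tracking the definition of $spe$ shows that the leading empty of each later block is exactly the cut point the recursion selects, so $spe(S,F)=w$.

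The step I expect to be the main obstacle is making the sufficiency construction and the spectrum computation match for $G^1=C_n$. On a cycle the convention $S_n=0$ must be installed, and the empty copy sitting at position $n$ is the \emph{leading} empty of the first block read cyclically; care is needed so that this wrap-around empty is accounted to the correct section and does not leave the last block with a length-$1$ nonempty run, which (ii) forbids. Getting the bookkeeping right so that a first section of size exactly $2$ is still realizable on the cycle, while every internal section keeps a nonempty run of length $\ge 2$, is the delicate part; the path case, by contrast, is routine once (i)--(ii) and the $G^2\not\simeq K_m$ argument are in place.
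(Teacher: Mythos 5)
Your proposal is correct and follows essentially the same route as the paper: necessity via the two local obstructions (domination forbids three consecutive empty copies; the defense inequality forbids a nonempty copy flanked by empty ones, with $G^2 \not\simeq K_m$ handling the end copies of a path) and sufficiency by exhibiting a GDA that fills the interior copies of each section, which is the paper's construction up to an inessential choice of which copies to fill. One small caveat: ``all of $S_n$ universal plus domination'' does not by itself force $G^2 \simeq K_m$ (a star is a counterexample), but the defense inequality you already wrote down additionally forces $S_n = V(G^2_n)$, from which the conclusion does follow.
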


\begin{proof}
Let $S$ be a GDA of $F$ such that $spe(S) = w$. By the construction of a spectrum, it is clear that $\underset{1 \leq i \leq t}{\sum} k_i = n$. Since every vertex of $S$ has a neighbour outside the copy of $G^2$ that it belongs, every $k_i \geq 2$. Since the definition of spectrum guarantees that for every $k_i$, for $i \geq 2$, the first copy of the section associated with it has no vertex of $S$, $k_i \geq 3$ for $i \geq 3$. 

Conversely, let $F_i = F[V(G^2_j) \cup \ldots \cup V(G^2_{j+k_i-1})]$ be the section associated with $k_i$.
If $k_i \geq 4$, add $V(G^2_{j+1}) \cup \ldots \cup V(G^2_{j+k_i-2})$ to $S$.
If $k_i = 3$, add $V(G^2_{j+1}) \cup V(G^2_{j+2})$ to $S$.
If $k_1 = 2$, add $V(G^2_1) \cup V(G^2_2)$ to $S$. It is clear $S$ is a dominating set,  every vertex of $S$ is defended, and that $spe(S) = w$, then $w$ is feasible.
\end{proof}

For the characterization of minimum GDA in terms of feasible sequences, we need some definitions. Given a positive integer $k$, we define 

\begin{itemize}

\item for $k \geq 4$, $\valI{k}$ as the cardinality of a minimum GDA $S$ of $P_k \circ G^2$ such that $s_1 = s_k = 0$;

\item for $k \in \{2,3\}$, $\valI{k} = \valI{4}$;

\item for $k \geq 3$, $\valE{k}$ as the minimum GDA $S$ of $P_k \circ G^2$ such that $s_1 = 0$;

\item $\valE{2} = \valE{3}$.

\end{itemize}

For a sequence $w = (k_1, \ldots, k_t)$, we define

\begin{itemize}
\item $\valT[P_n]{G^2}{w} = \valE{k_1} + \underset{2 \leq i \leq t-1}{\sum} \valI{k_i} + \valE{k_t}$;

\item $\valT[C_n]{G^2}{w} = \underset{1 \leq i \leq t}{\sum} \valI{k_i}$.
\end{itemize}

\begin{proposition} \label{pro:valGDA}
If $S$ is a GDA of $F = G^1 \circ G^2$ for $G^1 \in \{C_n, P_n\}$ and $G^2 \not\simeq K_m$, then $|S| \geq \val{spe(S)}$ for $G \in \{P,C\}$. Furthermore, there is GDA of $F$ of size $\val{spe(S)}$.
\end{proposition}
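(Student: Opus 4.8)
The plan is to prove the two halves of the statement in turn: the lower bound $|S|\ge \val{spe(S)}$ and the existence of a GDA attaining it. Both rest on two elementary features of the lexicographic product. First, a single vertex of a copy $G^2_j$ is adjacent to \emph{every} vertex of each neighbouring copy, so one vertex of a nonempty copy dominates an entire adjacent copy. Second, deleting from $F$ a copy that meets $S$ in no vertex can only shrink $N[v]-S$ for every remaining $v\in S$, and hence cannot destroy the defence of $v$. I will also use that, in any GDA, every empty copy has a nonempty neighbour, since otherwise that copy would be undominated.

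For the lower bound I write $spe(S)=(k_1,\dots,k_t)$ and let $F_1,\dots,F_t$ be the induced sections, so $F_i\simeq P_{k_i}\circ G^2$ and $|S|=\sum_{i=1}^t|S\cap V(F_i)|$. It thus suffices to bound each summand: $|S\cap V(F_i)|\ge\valI{k_i}$ for every cyclic section and every interior section of a path, and $|S\cap V(F_i)|\ge\valE{k_i}$ for the two external sections of a path. By construction of the spectrum, each section other than the first begins with an empty copy, and the normalisation $S_n=0$ makes the last copy of the cycle empty. When a section already carries its prescribed empty boundary — both ends for a cyclic or interior section, the separator end for an external one — the second product fact shows that $S\cap V(F_i)$ is itself a GDA of $F_i$ with the required empty ends, giving the bound at once. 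When the far boundary copy of the section is nonempty, the first copy of the next section is empty; appending that copy turns $S\cap V(F_i)$ into a GDA of $P_{k_i+1}\circ G^2$ with both ends empty, whence $|S\cap V(F_i)|\ge\valI{k_i+1}\ge\valI{k_i}$ by monotonicity of $\valI{k}$ in $k$ (and likewise for $\valE{k}$ on the external sections, using reflection so that the empty separator plays the role of the first copy). For the first cyclic section the missing empty copy across the wrap-around is supplied by $S_n=0$. Summing, and noting that the appended empty copies carry no vertex of $S$ so that nothing is double-charged, gives $|S|\ge\val{spe(S)}$.

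For the existence of a GDA of size $\val{spe(S)}$ I construct $S^\ast$ section by section, placing the empty separators so that every section begins with an empty copy. Inside a section of length $k_i\ge4$ I insert a minimum both-ends-empty GDA, of size $\valI{k_i}$; for an external section of a path I insert a minimum one-end-empty GDA, of size $\valE{k_i}$; and for a short section ($k_i\in\{2,3\}$) I insert the content of a size-$4$ (respectively size-$3$) optimum, which fits because its trailing empty copy is provided by the empty first copy of the following section. This is exactly why the definitions set $\valI{2}=\valI{3}=\valI{4}$ and $\valE{2}=\valE{3}$. By the first product fact every empty separator is dominated from within its own section, and by the second the defence of each inserted vertex is unaffected by the neighbouring empty copies; hence $S^\ast$ is a global defensive alliance, and its size is $\val{spe(S)}$ by construction.

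The step I expect to be the main obstacle is the boundary bookkeeping. The spectrum is cut asymmetrically — a lone empty copy is attached to the section that follows it, leaving the preceding section with a nonempty far end, whereas a double gap is split between the two sections — and consequently the sections are not literally independent: the domination of an empty boundary copy, and the defence of a nonempty boundary copy, may reach into the neighbouring section. Making the uniform charge $\valI{k_i}$ robust to this, through the append-and-monotonicity device together with the observation that every empty copy is dominated from one determined side, establishing the required monotonicity of $\valI{k}$ and $\valE{k}$, and checking that the short-section conventions glue together correctly in the construction, are the delicate points; everything else reduces to the two product facts above.
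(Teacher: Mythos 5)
Your argument is essentially the paper's own: decompose $S$ along the sections of $spe(S)$, use the empty first copy of each section (and the normalisation $S_n=0$ on the cycle) to localise both domination and defence, charge each section $\valI{k_i}$ or $\valE{k_i}$ through the adjacent empty copy, and assemble a matching GDA section by section as in the feasibility construction; your write-up is in fact more explicit than the paper's, which does not spell out the existence half at all. The one step you leave open, the monotonicity $\valI{k_i+1}\ge\valI{k_i}$ (and its analogue for $\valE{k}$) needed when a section's far end is nonempty, is precisely the step the paper also assumes silently when it concludes $|S'|\ge\valI{k_i}$ from $s_{j+k_i}=0$, so you have reproduced, and indeed made visible, the paper's reasoning.
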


\begin{proof}
Write $w = (k_1, \ldots, k_t)$ and let $F_i = F[V(G^2_j) \cup \ldots \cup V(G^2_{j+k_i-1})]$ be the section associated with $k_i$ and $S' = S \cap F_i$. First, consider $G = C$. For $k_i \geq 4$, since $s_{j+k_i} = 0$, it holds $|S'| \geq \valI{k_i}$. For $k_i = 3$, since $s_{j+3} = 0$, it holds $|S'| \geq \valI{4}$.
Finally for $k_1 = 2$, since $s_{3} = s_n = 0$, it holds $|S'| \geq \valI{4}$. Then, the result holds for $G = C$. Now consider $G = P$. For $k_i \geq 3$, since $s_j = 0$ or $s_{j+k_i} = 0$, it holds $|S'| \geq \valE{k_i}$. Finally for $k_1 = 2$, since $s_{3} = 0$, it holds $|S'| \geq \valE{3}$, completing the proof.
\end{proof}

\begin{corollary} \label{cor:dc}
Let $F = G^1 \circ G^2$ for $G^1 \in \{C_n, P_n\}$ and $G^2 \not\simeq K_m$. Then $\gamma_a(F) = \min\{val(w)\}$ where $w$ is a feasible sequence for $G^2$.
\end{corollary}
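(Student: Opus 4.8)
The plan is to derive this corollary directly from the two preceding propositions by establishing matching upper and lower bounds for $\gamma_a(F)$ over the finite family of feasible sequences. First I would record that this family is indeed finite: every feasible $w = (k_1, \ldots, k_t)$ satisfies $\sum_{i} k_i = n$ with each $k_i \geq 2$, so there are only finitely many such sequences and the minimum $\min\{\val{w}\}$ is attained. I would also invoke the proposition characterizing feasible sequences to make the crucial identification that ``$w$ is feasible'' and ``$w = spe(S)$ for some GDA $S$'' are two names for the same thing.

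For the upper bound $\gamma_a(F) \leq \min\{\val{w}\}$, I would fix an arbitrary feasible sequence $w$. By the characterization just mentioned, feasibility means precisely that $w$ is the spectrum of some GDA $S$ of $F$, i.e.\ $spe(S) = w$. The ``furthermore'' clause of Proposition~\ref{pro:valGDA} then guarantees a GDA of $F$ of size exactly $\val{w}$, so $\gamma_a(F) \leq \val{w}$. Since this holds for every feasible $w$, I conclude $\gamma_a(F) \leq \min\{\val{w}\}$.

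For the lower bound $\gamma_a(F) \geq \min\{\val{w}\}$, I would take a minimum GDA $S$, so that $|S| = \gamma_a(F)$, and set $w = spe(S)$. Because $w$ is the spectrum of an actual GDA, the characterization says $w$ is feasible. Proposition~\ref{pro:valGDA} then yields $|S| \geq \val{w}$, and since $w$ belongs to the family over which the minimum is taken, $\val{w} \geq \min\{\val{w'}\}$. Chaining these gives $\gamma_a(F) \geq \min\{\val{w}\}$, and together with the upper bound the claimed equality follows.

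There is no genuine obstacle here, since the two nontrivial ingredients---the per-section lower bound and the explicit construction---are already supplied by Proposition~\ref{pro:valGDA}. The only points that need care are the bookkeeping identifications that (i) ``feasible sequence'' and ``spectrum of some GDA'' coincide, so that the minimum GDA's spectrum lies in the admissible family and, conversely, every feasible sequence is realizable by an actual GDA, and (ii) the size guaranteed by the ``furthermore'' clause is exactly $\val{spe(S)}$ rather than merely some upper bound, which is what forces both inequalities to be tight simultaneously.
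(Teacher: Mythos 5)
Your argument is correct and is exactly the derivation the paper intends: the corollary is stated without proof precisely because it follows immediately from combining the characterization of feasible sequences (every feasible $w$ is realized as $spe(S)$ for some GDA $S$, and conversely) with both clauses of Proposition~\ref{pro:valGDA}. Your two-inequality bookkeeping matches the paper's implicit reasoning.
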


As a byproduct, we have that, for $G^1 \in \{C_n,P_n\}$ and $G^2 \not\simeq K_m$, if the number of feasible sequences $w$ that can reach the minimum GDA of $G^1 \circ G^2$ is bounded by a polynomial on $n$ and $m$, one can determine them efficiently, and the values $\valI{k}$ and $\valE{k}$ are known for every $k \leq max(w)$, one can find $\gamma_a(G^1 \circ G^2)$ efficiently. We show in next sections that this does hold for $G^2 \in \{P_m, C_m\}$. The last result of this section deals with the external sections.

\begin{corollary} \label{cor:external23}
If $w$ is a feasible sequence of $F = P_n \circ G^2$ for $G^2 \not\simeq K_m$ such that $\valT[P_n]{G^2}{w} = \gamma_a(F)$, then the following hold:

\begin{itemize}
	\item if $3$ occurs in $w$, we can assume that $k_t = 3$;
	\item if $k_1 \neq 2$ and there are two occurrences of $3$ in $w$, we can assume that $k_1 = k_t = 3$.
\end{itemize}
\end{corollary}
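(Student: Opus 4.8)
The plan is to argue entirely at the level of feasible sequences. By Corollary~\ref{cor:dc} we have $\gamma_a(F) = \min_w \valT[P_n]{G^2}{w}$ over feasible $w$, so I fix a feasible $w = (k_1, \ldots, k_t)$ with $\valT[P_n]{G^2}{w} = \gamma_a(F)$ and let $M$ be its underlying multiset. (If $t = 1$ both statements are immediate, since then $n = k_1$ and the presence of a $3$ forces $k_1 = k_t = 3$, while two occurrences of $3$ are impossible; so assume $t \ge 2$.) Writing $g(k) = \valI{k} - \valE{k}$ and adding and subtracting $\valI{k_1} + \valI{k_t}$ in the defining formula yields
\[
\valT[P_n]{G^2}{w} = \sum_{k \in M} \valI{k} - g(k_1) - g(k_t).
\]
Since any $S$ with $s_1 = s_k = 0$ is in particular an $S$ with $s_1 = 0$, we have $\valE{k} \le \valI{k}$, hence $g(k) \ge 0$. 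The first sum depends only on the multiset $M$ and is therefore invariant under reordering; consequently, minimizing the value over all feasible sequences with multiset $M$ is the same as choosing the two boundary entries $k_1, k_t$ so as to maximize $g(k_1) + g(k_t)$. The only reordering constraint is feasibility: an entry equal to $2$, if present, is pinned to position $1$, whereas every other entry (being $\ge 3$) may be placed in any position.

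It then suffices to prove the following gap monotonicity, which I will call the \emph{Main Lemma}: $g(k) \le g(3)$ for every $k \ge 3$, and $g(2) = g(3)$. Granting it, both items follow. For the first, suppose $3 \in M$. If $M$ contains a $2$, it occupies position $1$ and, because $g(2) = g(3) = \max_k g(k)$, the value is minimized by placing the largest available gap at $k_t$; a $3$ realizes this, so some minimum arrangement has $k_t = 3$. If $M$ contains no $2$, both boundary slots are free, and a $3$ is part of some gap-maximizing boundary pair, so again a minimum arrangement has $k_t = 3$. In either case every reordering of $M$ is feasible with value $\ge \gamma_a(F)$ while $w$ itself is minimum, so the constructed arrangement attains $\gamma_a(F)$. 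For the second item the hypothesis $k_1 \ne 2$ means $M$ has no $2$, so both boundary slots are free; with two $3$'s available we may set $k_1 = k_t = 3$, giving $g(k_1) + g(k_t) = 2g(3)$, which is maximal since each term is at most $g(3)$. Hence a minimum arrangement has $k_1 = k_t = 3$.

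The Main Lemma is the main obstacle. The equality $g(2) = g(3)$ is immediate from the clamping conventions, since $\valI{2} = \valI{4}$ and $\valE{2} = \valE{3}$ give $g(2) = \valI{4} - \valE{3} = g(3)$. The inequality $g(k) \le g(3)$ is equivalent to $\valI{k} \le \valE{k} + (\valI{4} - \valE{3})$, i.e.\ an upper bound on the extra cost of forcing the far end of $P_k \circ G^2$ to be empty. The approach I would take is a boundary surgery: start from an optimal external solution realizing $\valE{k}$; if its last copy is already empty it is an internal solution and $g(k) \le 0 \le g(3)$, so we are done. Otherwise I would replace its configuration on the last three copies by an optimal empty-ended cap, re-establishing domination and defense near the junction, and bound the resulting change in cardinality by $\valI{4} - \valE{3}$. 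The delicate point, and the crux of the whole argument, is the seam where the retained part meets the cap: splicing two locally optimal configurations can leave a vertex of a boundary copy with too many neighbours outside $S$, so one must show that the cap can be chosen compatibly with the retained part. An equivalent route is to prove the marginal inequalities $\valI{k} - \valI{k-1} \le \valE{k} - \valE{k-1}$ together with monotonicity of $\valE{k}$ and then telescope; establishing those marginals meets the same boundary interaction, which is where the real work lies.
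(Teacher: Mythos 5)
The paper states this corollary with no proof at all, so there is nothing of the authors' to compare you against; judged on its own terms, your write-up is a correct and clean \emph{reduction}, but not a complete proof. The bookkeeping is right: for $t \geq 2$ the identity $\valT[P_n]{G^2}{w} = \sum_{i=1}^{t}\valI{k_i} - g(k_1) - g(k_t)$ with $g(k) = \valI{k} - \valE{k} \geq 0$ holds, the only constraint on reordering the multiset is that a $2$ (of which there can be at most one) must sit in position $1$, and both bullets do follow once one knows that $g$ is maximized at $3$ (and, by the clamping conventions, equally at $2$). This is, as far as I can tell, also the only honest way to justify the corollary: physically permuting the sections of an optimal GDA does not work, because the former last section becomes internal and its vertices in the former copy $n$ acquire $m$ new non-$S$ neighbours, which can break the defense condition.

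The gap is exactly the one you flag yourself: the Main Lemma $g(k) \leq g(3)$ for every $k \geq 3$ is never proved, and it is the entire mathematical content of the statement --- without it, moving an internal $3$-section to the boundary trades a saving of $g(3) = \valI{4} - \valE{3}$ against a cost of $g(k_t)$, and nothing guarantees the trade is favourable. Both of your proposed routes (capping the last three copies, or telescoping marginal inequalities) stall at the seam problem you describe, and you offer no mechanism for repairing defense at the junction within the claimed budget; ``the real work lies'' there and is not done. It is worth noting why the corollary is nevertheless true where the paper actually uses it: for $G^2 \in \{P_m, C_m\}$ and the section lengths $k \leq 7$ that survive to the later arguments, one can exhibit optimal external sections of length at least $4$ whose end-copies are both empty (this is essentially what the sets $X_{k,m}$ and $Y_{k,m}$ of Section~\ref{sec:nsmall} provide), so that $\valI{k} = \valE{k}$ and $g(k) = 0$, while $g(3) \geq 0$ because deleting the empty fourth copy from an optimal internal solution of $P_4 \circ G^2$ yields an external solution of $P_3 \circ G^2$. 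If you restrict the claim to those cases, your argument closes by citing Proposition~\ref{pro:p2-7G2}; at the stated level of generality (arbitrary $G^2 \not\simeq K_m$ and arbitrary $k$) your proof is incomplete.
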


\section{Properties of global defensive alliances} \label{sec:first}

In this section, we present some bounds and properties of GDAs that will be useful in the remaining sections.

\begin{proposition} \label{pro:basic}
Let $S$ be a GDA of $G^1 \circ G^2$ for $G^1 \in \{P_n, C_n\}$, $G^2 \in \{P_m, C_m\}$, $n \geq 3, m \geq 3$, and $i$ be an integer such that $2 \leq i \leq n-1$. Then, the following setences hold

\begin{enumerate} [$(i)$]
	\item \label{ite:m2} If $G^2 \simeq \{C_m,P_3\}$, then $s_{i-1} + s_i + s_{i+1} \geq m+2$;
	
	\item If $G^2 \simeq P_m$ for $m \geq 4$, then $s_{i-1} + s_i + s_{i+1} \geq m+1$;
	
	\item If $s_i \geq 1$, then $s_{i-1} + s_{i+1} \geq m-1$; and  \label{ite:basic}
		
	\item If $1 \leq s_i < m$, then $s_{i-1} + s_{i+1} \geq m$.
\end{enumerate}
\end{proposition}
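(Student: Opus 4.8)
The plan is to derive everything from a single defence inequality at an interior copy and then to split into cases according to the size of $s_i$.

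First I would record the master inequality. For an interior index (so $2\le i\le n-1$) the vertex $i$ of $G^1$ has exactly the two neighbours $i-1$ and $i+1$, so a vertex $v=(i,x)$ of $G^2_i$ is adjacent to all $2m$ vertices of $G^2_{i-1}\cup G^2_{i+1}$ together with its $d_{G^2}(x)\le 2$ neighbours inside $G^2_i$. Writing $a_v=|N_{G^2}(x)\cap S_i|$ and $b_v=d_{G^2}(x)-a_v$ for the numbers of neighbours of $v$ inside its own copy that are, respectively, in and out of $S$, the defence condition $|N[v]\cap S|\ge |N[v]-S|$ for a vertex $v\in S_i$ rearranges to
\[ 2(s_{i-1}+s_{i+1}) \ge 2m-1+(b_v-a_v). \]
Since $0\le a_v,b_v$ and $a_v+b_v=d_{G^2}(x)\le 2$, all four statements should come from choosing $v\in S_i$ appropriately and using that $s_{i-1}+s_{i+1}$ is an integer.

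For \eqref{ite:basic}, I would take any $v\in S_i$ (which exists because $s_i\ge 1$); as $b_v-a_v\ge -2$, the master inequality gives $s_{i-1}+s_{i+1}\ge m-\tfrac32$, hence $\ge m-1$. For the fourth item I would use that $G^2$ is connected and $S_i$ is a nonempty proper subset of $V(G^2_i)$, so some $v\in S_i$ has a copy-neighbour outside $S_i$; then $b_v\ge 1$, which forces $a_v\le d_{G^2}(x)-1\le 1$ and therefore $b_v-a_v\ge 0$. The master inequality now yields $s_{i-1}+s_{i+1}\ge m-\tfrac12$, i.e. $\ge m$.

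For the three-term bounds \eqref{ite:m2} and the second item I would fix $i$ and argue by the size of $s_i$. If $s_i=m$, then item \eqref{ite:basic} gives $s_{i-1}+s_{i+1}\ge m-1$, so the triple sum is at least $2m-1\ge m+2$ (resp. $\ge m+1$) for $m\ge 3$. If $1\le s_i<m$, then the fourth item gives $s_{i-1}+s_{i+1}\ge m$; this already settles the $P_m$ bound, and settles the $\{C_m,P_3\}$ bound as soon as $s_i\ge 2$. The one delicate point for \eqref{ite:m2} is $s_i=1$: for $C_m$ the unique vertex of $S_i$ has both of its copy-neighbours outside $S_i$, so $b_v-a_v=2$ and the master inequality sharpens to $s_{i-1}+s_{i+1}\ge m+1$, giving triple sum $m+2$; for $P_3$ I would simply check the two positions (middle versus end) of the single vertex by hand.

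The hard part will be the case $s_i=0$, where $G^2_i$ contributes no vertex of $S$ and the master inequality is unavailable. There I would lean on domination, since every vertex of $G^2_i$ must be dominated from outside, forcing $S_{i-1}\cup S_{i+1}\neq\emptyset$, and then on the defence of the vertices in the nonempty neighbouring copies. The obstacle is that those defence inequalities reach into copies $i-2$ and $i+2$ rather than into the window $\{i-1,i,i+1\}$, so pinning down the three-term bound in this case is exactly the step that requires the global structure of $S$; I expect that to be where the real care is needed.
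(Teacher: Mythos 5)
Your completed steps are correct and follow essentially the same route as the paper: the paper also works from the defence condition of a single $v\in S_i$ together with $N[v]\subseteq V(G^2_{i-1})\cup V(G^2_i)\cup V(G^2_{i+1})$, obtains $(iv)$ exactly as you do by choosing $v$ with a copy-neighbour outside $S$, and gets $(iii)$ from $(i)$--$(ii)$ by discounting the at most two neighbours inside the copy (your direct derivation from the master inequality is cleaner). The two places where you stall, however, are not failures of your method but defects of the statement itself, and neither can be repaired by a better argument.

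First, the case $s_i=0$ of $(i)$ and $(ii)$ that you flag as ``the hard part'' is simply false: in $P_4\circ C_3$ the set $S=V(G^2_1)\cup V(G^2_4)$ is a GDA (every $v\in S$ satisfies $|N[v]\cap S|=3=|N[v]-S|$, and copies $2$ and $3$ are dominated by copies $1$ and $4$), yet $s_1+s_2+s_3=3<m+2$. The paper's proof silently assumes $s_i\geq 1$ by opening with ``Let $v\in S_i$'', and all later applications of $(i)$ and $(ii)$ are made under that hypothesis; the correct fix is to add $s_i\geq 1$ to the statement, not to prove the $s_i=0$ case. Second, your deferred hand-check of the end-vertex position in $P_3$ cannot close: there $d(v)=2m+1=7$, your master inequality gives only $s_{i-1}+s_{i+1}\geq 3$, and the bound $m+2=5$ genuinely fails. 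For example, in $P_5\circ P_3$ the set consisting of $V(G^2_3)$ together with one end vertex of $G^2_2$ and one end vertex of $G^2_4$ is a GDA of size $5$ with $s_1+s_2+s_3=4$ and $s_2=1$. The paper's proof of $(i)$ asserts $d(v)=2m+2$, which for $G^2\simeq P_3$ holds only when $v$ lies over the middle vertex of $P_3$; so for $P_3$ only the bound $m+1$ of item $(ii)$ is available in general, and the places where item $(i)$ is invoked with $G^2\simeq P_3$ (e.g.\ Proposition~\ref{pro:lower}) need to be rechecked against the weaker bound.
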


\begin{proof}
Let $v \in S_i$ and $d$ be the number of neighbors of $v$ in $S_i$.

\bigskip
\noindent $(i)$  Since $d(v) = 2m+2$, $S$ must contain at least $m+1$ neighbors of $v$. This means that $s_{i-1} + s_i + s_{i+1} \geq m+2$ because $N[v] \subseteq V(G^2_{i-1}) \cup V(G^2_i) \cup V(G^2_{i+1})$.

\bigskip 
\noindent $(ii)$ Since $d(v) = 2m+1$, $S$ must contain at least $m$ neighbors of $v$. This means that $s_{i-1} + s_i + s_{i+1} \geq m+1$ because $N[v] \subseteq V(G^2_{i-1}) \cup V(G^2_i) \cup V(G^2_{i+1})$.

\bigskip
\noindent $(iii)$ Consequence of $(i), (ii),$ and $1 \leq |N(v) \cap V(G^2_i)| \leq 2$.

\bigskip
\noindent $(iv)$ Consequence of $(iii)$ and the fact that $v$ can be chosen as a vertex having a neighbor in $V(G^2_i) \setminus S$.
\end{proof}

\begin{proposition} \label{pro:lower}
Let $S$ be a GDA of $G^1 \circ G^2$ for $G^1 \in \{P_n, C_n\}$, $G^2 \in \{P_m, C_m\}$, $n \equiv r \mod 4$, and $m \geq 3$ such that $s_i \geq 1$ for every $2 \leq i \leq n-1$. Then the following hold.
	
	\begin{enumerate} [$(i)$]
	\item \label{ite:r0} If $r = 0$, then $|S| \geq (2m-1) \frac{n}{4}$		
		
	\item \label{ite:r123} If $r \in \{1,2,3\}$ and $n \geq 8$, then
	$|S| \geq (2m-1) \lfloor\frac{n}{4} \rfloor  + t$, where

	$t = 
	\begin{cases}
	m + 1 & \mbox{, if } r = 3 \mbox{ and } G^2 \simeq P_m\\
	m + 2 & \mbox{, if } r = 3 \mbox{ and } G^2 \simeq C_m \\
	r & \mbox{, if } r \in \{1,2\}
	\end{cases}$

	\item \label{ite:m3} If $n \geq 6$ and $m = 3$, then $|S| \geq 6 \lfloor\frac{n}{4} \rfloor  + t$, where	
	$t =
	\begin{cases}
	r + 2 & \mbox{, if } r \in \{1,2,3\} \\
	0 & \mbox{, if } r = 0	
	\end{cases}$

	\item \label{ite:n9m4} If $n \geq 9$ and $m = 4$, then $|S| \geq 2n$ for $G^2 \simeq C_m$ and $|S| \geq 2n-2$ for $G^2 \simeq P_m$ 
	
	\end{enumerate}
\end{proposition}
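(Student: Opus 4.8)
The backbone of the argument is to cut the $n$ copies into consecutive \emph{blocks} of four copies and feed the local inequalities of Proposition~\ref{pro:basic} into them. The plan is to prove that four consecutive copies whose two middle copies are interior contribute at least $2m-1$ to $|S|$; summing this block bound over the $\lfloor n/4 \rfloor$ blocks produces the leading term, while the constant $t$ and the sharper leading constants for $m \in \{3,4\}$ are extracted from the $r$ leftover copies and from the two end copies.

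For a block on positions $a,a+1,a+2,a+3$ with $s_{a+1},s_{a+2} \geq 1$, I would split on whether the two middle copies reach the value $m$. If $s_{a+1} < m$, then item $(iv)$ of Proposition~\ref{pro:basic} at $a+1$ gives $s_a + s_{a+2} \geq m$ and item $(iii)$ at $a+2$ gives $s_{a+1} + s_{a+3} \geq m-1$, so the block sums to at least $2m-1$; the case $s_{a+2} < m$ is symmetric; and if $s_{a+1} = s_{a+2} = m$ the block already sums to at least $2m+1$, since at least one of its end copies is interior. Crucially these three estimates only ever apply items $(iii)$ and $(iv)$ at the \emph{interior} middle indices, so the block bound $\sum_{i=a}^{a+3} s_i \geq 2m-1$ is insensitive to whether an end copy vanishes; summing over the $q$ blocks tiling $\{1,\dots,n\}$ proves the case $r=0$. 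When $m=3$ I would instead use the triple inequality $(i)$, $s_a+s_{a+1}+s_{a+2} \geq m+2 = 5$, plus one more interior copy, choosing the triple centre so as to avoid an empty end copy, to raise the block bound to $6$; when $m=4$ and $G^2 \simeq C_m$, applying item $(iv)$ at both middle copies (or item $(iii)$ against an interior end copy when a middle copy equals $m$) raises it to $2m=8$ for every block away from the ends.

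For $r \in \{1,2,3\}$ I would apply the block bound to the $q$ blocks on positions $1,\dots,4q$ and analyse the remaining copies $4q+1,\dots,n$ separately. If $r=3$, the middle leftover copy $4q+2$ is interior, so item $(i)$ (resp.\ $(ii)$) of Proposition~\ref{pro:basic} gives $s_{4q+1}+s_{4q+2}+s_{4q+3} \geq m+2$ for a cycle (resp.\ $m+1$ for a path), which is exactly the claimed $t$. If $r \in \{1,2\}$, the surplus $+r$ comes from a short analysis of the end copy $s_n$: when $s_n \geq 1$ the interior leftover copies already supply it, and when $s_n = 0$ I would invoke item $(iv)$ at $n-1$ to force a full copy $s_{n-2}=m$ (or use $s_{n-1}=m$ directly), which lifts the last block's contribution from $2m-1$ to $2m$ and recovers the missing unit. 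The cases $m=3$ and $m=4$ run on the same template with block constants $6$ and $8$, the small orders $n \in \{6,7\}$ for $m=3$ being dispatched directly by disjoint triples together with the end-copy analysis; the hypotheses $n \geq 8$ and $n \geq 9$ are what force $q \geq 2$, so that the last block $\{4q-3,\dots,4q\}$ has interior indices and is disjoint from the leftover copies.

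The step I expect to be the main obstacle is the treatment of the end copies $s_1$ and $s_n$, which the hypothesis only bounds below by $0$. An empty end copy degrades the per-block bound locally---this is exactly why the sharpened constants $6$ and $8$ cannot be claimed for the boundary blocks outright---and one must either use item $(iv)$ (an empty end copy forces its neighbour to be full) to restore the bound, or, in the tight cases $m \in \{3,4\}$ where the stated value is already attained by the constant pattern $s_i \equiv 2$, amortize the boundary deficit against a surplus in the adjacent block. Keeping the additive constants exact---in particular the $+r$ for $r \in \{1,2\}$, and the cycle/path gaps ($m+2$ versus $m+1$, and $2n$ versus $2n-2$) that trace back to items $(i)$ versus $(ii)$ of Proposition~\ref{pro:basic} and to the weaker forcing at a path end---is where the case analysis over $(s_1,s_n) \in \{0,\geq 1\}^2$ and over which copies attain the value $m$ becomes most delicate.
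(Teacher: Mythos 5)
Your overall strategy --- windows of four consecutive copies, each forced to contain at least $2m-1$ vertices of $S$ via Proposition~\ref{pro:basic}, with the $r$ leftover copies analysed separately --- is exactly the paper's. Two points of execution differ. First, for item $(\ref{ite:r123})$ you place the leftover copies at positions $4q+1,\dots,n$, so the possibly empty end copy $n$ sits inside the leftover, and you then need the patch ``$s_n=0$ forces $s_{n-2}=m$, which lifts the last block to $2m$.'' That patch is in fact correct --- with $s_{n-2}=m$, item $(iii)$ of Proposition~\ref{pro:basic} applied at $n-2$ gives $s_{n-3}+s_{n-1}\ge m-1$, and adding $s_{n-4}\ge 1$ yields $2m$ --- but it is work you can avoid entirely: the paper writes $n-4-r=4k$ and places the leftover at the \emph{interior} positions $5,\dots,4+r$, where the hypothesis $s_i\ge 1$ (for $r\le 2$) or the triple inequality (for $r=3$) applies with no case analysis on $s_1$ or $s_n$.

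Second, and more substantively, item $(\ref{ite:n9m4})$ does not follow from a block bound. For $G^2\simeq C_4$ the target $2n$ is an average of $2$ per copy over \emph{all} $n$ copies, and $n$ need not be divisible by $4$; interior blocks worth $8$ plus end triples worth $6$ leave leftover copies that are only guaranteed $s_i\ge 1$, so for instance at $n=11$ the decomposition $3+4+3+1$ yields only $6+8+6+1=21<22$. The paper instead proves a per-copy compensation statement --- if $s_i=1$ then $s_{i+1}\ge 3$, or $s_{i+1}+s_{i+2}\ge 5$, or $s_{i+1}+s_{i+2}+s_{i+3}\ge 7$ --- so that every deficit below the average $2$ is repaid by a surplus in the next one to three copies, and then closes with the end bounds $s_1+s_2+s_3\ge 6$ (resp.\ $\ge 5$ for $P_4$). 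Your phrase ``amortize the boundary deficit against a surplus in the adjacent block'' is pointing at exactly this, but that compensation lemma is the real content of item $(\ref{ite:n9m4})$ and must be stated and proved; without it the case $m=4$ is not closed.
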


\begin{proof}
$(i)$ By Proposition~\ref{pro:basic}, $s_1 + s_3 \geq m-1$ and $s_2 + s_4 \geq m-1$. If $s_1 + s_3 = s_2 + s_4 = m - 1$, then $S$ is not a GDA because some vertex of $S_2$ is not defended in $S$. Then $s_1 + s_2 + s_3 + s_4 \geq 2m-1$. In fact, we can conclude that $s_i + s_{i+1} + s_{i+2} + s_{i+3} \geq 2m-1$ for every $i \in \{1, \ldots, n - 3\}$.

\bigskip
\noindent $(ii)$
Since $n \geq 8$, $n - 4 - r = 4k$ for some positive integer $k$.
If suffices to show that for $T = (V(G^2_5) \cup \ldots \cup V(G^2_{5+r-1})) \cap S$ it holds $|T| \geq t$.
If $r \leq 2$, then $|T| \geq r$ because $s_i \geq 1$ for every $i \in \{2, \ldots, n-1\}$. If $r = 3$, then $|T| \geq m + 1$ if $G^2 \simeq P_m$ and $|T| \geq m + 2$ if $G^2 \simeq C_m$ due Proposition~\ref{pro:basic}.

\bigskip
\noindent $(iii)$ By Proposition~\ref{pro:basic}~$(\ref{ite:m2})$, it holds $s_i + s_{i+1} + s_{i+2} + s_{i+3} \geq 6$ for every $i \in \{1, \ldots, n - 3\}$. Then, the result is clear for $r = 0$. Case $r = 1$ is consequence of the fact that $s_1 + \ldots + s_9 \geq 15$, case $r = 2$ because $s_1 + \ldots + s_6 \geq 10$, and case $r = 3$ because $s_1 + \ldots + s_7 \geq 11$.

\bigskip
\noindent $(iv)$ It suffices to prove for $G^2 \simeq P_m$. We prove that if $s_i = 1$, then $s_{i+1} \geq 3$ or $s_{i+1} + s_{i+2} \geq 5$ or $s_{i+1} + s_{i+2} + s_{i+3} \geq 7$. If $s_{i+1} \geq 3$, we are done. If $s_{i+1} = 1$, then $s_{i+2} \geq 3$. If then $s_{i+2} = 3$, then there is a vertex of degree 2 in $S_{i+2}$ having a neighbor in $V(G^2_2) \setminus S$, therefore $s_{i+3} \geq 3$. Then consider $s_{i+3} = 4$ and $s_{i+1} + s_{i+2} \geq 5$. Then consider $s_{i+1} = 2$. This means that there is a vertex of degree 2 in $S_{i+1}$ having a neighbor in $V(G^2_1) \setminus S$, therefore $s_{i+2} \geq 3$ and $s_{i+1} + s_{i+2} \geq 5$.

Now, it remains to recall that $s_1 + s_2 + s_3 \geq 6$ and $s_{n-2} + s_{n-1} + s_n \geq 6$ for $G^2 \simeq C_4$ and $s_1 + s_2 + s_3 \geq 5$ and $s_{n-2} + s_{n-1} + s_n \geq 5$ for $G^2 \simeq P_4$.
\end{proof}

\begin{proposition} \label{pro:spanning}
If $G$ is a spanning subgraph of $G'$ and $S$ is a minimum GDA of $G$ such that no vertex of $S$ is incident to any edge of $E(G') \setminus E(G)$, then $S$ is also a minimum GDA of $G'$.
\end{proposition}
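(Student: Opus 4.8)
The plan is to split the claim into the two inequalities $\gamma_a(G') \le |S|$ and $\gamma_a(G') \ge |S|$. Since $S$ is a minimum GDA of $G$ we have $|S| = \gamma_a(G)$, so the first inequality amounts to showing that $S$ is itself a GDA of $G'$, and the second to showing that no GDA of $G'$ can be smaller than $S$; together they give $\gamma_a(G') = |S|$ with $S$ as a witnessing minimum GDA.

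For the upper bound I would show directly that $S$ is a GDA of $G'$. The hypothesis that no vertex of $S$ is incident to an edge of $E(G') \setminus E(G)$ means that for every $v \in S$ all edges at $v$ already lie in $G$, so $N_{G'}[v] = N_G[v]$. Hence $|N_{G'}[v] \cap S| = |N_G[v] \cap S|$ and $|N_{G'}[v] \setminus S| = |N_G[v] \setminus S|$, and the defense inequality for $v$ holds in $G'$ precisely because it holds in $G$; thus $S$ is a defensive alliance of $G'$. Domination is immediate, since $E(G) \subseteq E(G')$ means every vertex dominated by $S$ in $G$ remains dominated in $G'$. Therefore $S$ is a GDA of $G'$ and $\gamma_a(G') \le |S|$.

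The substantive part is the lower bound $\gamma_a(G') \ge |S| = \gamma_a(G)$, and this is where I expect the main obstacle. My route would be to take a minimum GDA $T$ of $G'$ and manufacture from it a GDA of $G$ of size at most $|T|$, which forces $|T| \ge \gamma_a(G)$. The key leverage is that every edge of $E(G') \setminus E(G)$ has both endpoints in $V \setminus S$: consequently the adjacencies inside $S$ and between $S$ and $V \setminus S$ are identical in $G$ and $G'$, so when we pass from $G'$ back to $G$ each vertex of $S$ retains both its defense status and its domination status. The only defects that deleting the new edges can create are confined to $V \setminus S$, namely some vertices of $V \setminus S$ that were dominated in $G'$ only through a new edge, and some vertices of $T \cap (V \setminus S)$ that were defended only through a new edge. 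For each undominated vertex $b \in V \setminus S$ one checks, using that $S$ dominates $G$, that $b$ has a neighbour in $S \setminus T$, which suggests repairing domination by swapping such $S$-vertices into $T$.

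The hard part will be the accounting in this repair, and it is exactly where the hypothesis on $S$ is indispensable, because $\gamma_a$ is \emph{not} monotone under edge addition: for example $C_6$ is a spanning subgraph of $K_{3,3}$ yet $\gamma_a(C_6) = 4 > 2 = \gamma_a(K_{3,3})$, so without the restriction on the new edges the statement would be false. The delicate step is thus to show that the vertices one adds to restore domination and defense in $G$ can be charged against vertices of $T$ that become redundant once the new edges are absent, so that the total size never increases. I expect this bookkeeping — rather than the routine verification that $S$ is a GDA of $G'$ — to be the crux of the proof.
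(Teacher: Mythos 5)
Your first half is sound, and it is in fact all that the paper's own one-line proof establishes: the observation that the neighborhood of each vertex of $S$ is unchanged gives that $S$ is still a defensive alliance of $G'$, and $E(G)\subseteq E(G')$ preserves domination, so $\gamma_a(G')\leq |S|$. The genuine gap is the lower bound you correctly flagged as the crux, and the problem is worse than delicate bookkeeping: under the stated hypotheses the inequality $\gamma_a(G')\geq |S|$ is simply false, so no charging or exchange argument can close it. Take $G$ to be the spider with center $c$ and three legs $c-x_i-y_i$ for $i\in\{1,2,3\}$. Any GDA must meet each set $\{x_i,y_i\}$ in order to dominate $y_i$; a GDA of size $3$ would therefore contain exactly one vertex per leg and omit $c$, and the vertex $x_j$ it must contain to dominate $c$ would then satisfy $|N[x_j]\cap S|=1<2=|N[x_j]\setminus S|$. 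Hence $\gamma_a(G)=4$, attained by the minimum GDA $S=\{c,x_1,x_2,x_3\}$. Now let $G'$ be obtained by adding the triangle $y_1y_2y_3$; no vertex of $S$ is incident to a new edge. But $T=\{c,x_1,y_1\}$ is a GDA of $G'$: it dominates every vertex ($c$ covers the $x_i$, $y_1$ covers $y_2,y_3$), $N[x_1]\subseteq T$, and both $c$ and $y_1$ satisfy $|N[v]\cap T|=2\geq 2=|N[v]\setminus T|$. So $\gamma_a(G')\leq 3<4=|S|$ and $S$ is not a minimum GDA of $G'$.

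The only conclusion that actually follows from the hypotheses is the direction you proved, namely that $S$ remains a GDA of $G'$, hence $\gamma_a(G')\leq\gamma_a(G)$. Your $C_6\subseteq K_{3,3}$ example shows why monotonicity fails without the hypothesis on $S$, but it does not probe the statement itself (there every vertex meets a new edge); the spider example shows the hypothesis is still not enough for minimality. In the paper the proposition is only ever invoked with $G=P_n\circ G^2$ and $G'=C_n\circ G^2$, where the reverse inequality $\gamma_a(C_n\circ G^2)\geq\gamma_a(P_n\circ G^2)$ is supplied independently by the spectrum machinery (Corollary~\ref{cor:dc}, cf.\ Corollary~\ref{cor:hierarchy}); minimality in $G'$ comes from that, not from Proposition~\ref{pro:spanning}. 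So your instinct about where the difficulty lies was exactly right, but the repair you planned cannot be carried out in general: the statement should either be weakened to ``$S$ is also a GDA of $G'$'' or be paired with a separate proof of $\gamma_a(G')\geq\gamma_a(G)$ in each application.
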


\begin{proof}
	Consequence of the fact that the neighborhood  of each vertex of $S$ is the same in $G$ and in $G'$.
\end{proof}

\section{Determining $\gamma_a$ for paths and cycles} \label{sec:Pn}

For $n \geq 3$ and $m \geq 2$, we show in this section that $\gamma_a(G_1,G_2)$ for $G^1,G^2 \in \{C_n,P_m\}$ is the minimum among at most four values. Since these values are easily evaluated, one can determining $\gamma_a(G^1,G^2)$ within a constant number of operations. We consider first the case where $G^1$ has order at most 7.

\subsection{Case $n \leq 7$} \label{sec:nsmall}

Let $F = G^1 \circ G^2$, for $G^1 \simeq P_n$, $G^2 \simeq C_m$, $n \in \{2, \ldots, 7\}$, and $m \geq 3$. We define $X_{n,m} \subseteq V(F)$ as follows:

\begin{itemize}
\item $X_{2,3} = V(G^2_1)$. For $m \geq 4$, define $X_{2,m} = T_1 \cup T_2$, where $T_1$ and $T_2$ are the vertex sets of paths of order $\lfloor\frac{m}{2}\rfloor$ of $G^2_1$ and $G^2_2$, respectively.

\item $X_{3,m} = V(G^2_3) \cup T_2$, where $T_2$ is the vertex set of a path of order $x$ of $G^2_2$ where $x = \max \{2, \lfloor \frac{m-2}{2}\rfloor\}$.

\item $X_{4,m} = V(G^2_3) \cup V(G^2_2) - \{u\}$, for some vertex $u \in V(G^2_2)$.

\item $X_{5,m} = V(G^2_3) \cup T_2 \cup T_4$ where $T_2$ contains two adjacent vertices of $G^2_2$ and $T_4$ contains the vertices of a path of $G^2_4$ of size $\max\{2, m - 3\}$.

\item $X_{6,3} = V(G^2_1) \cup V(G^2_5) \cup T_4$, where $T_4$ is a pair of vertices; for $m \geq 4$, define $X_{6,m} = V(G^2_3) \cup V(G^2_4) \cup T_2 \cup T_5$, where $T_2$ and $T_5$ are two adjacent vertices of $G^2_2$ and $G^2_5$, respectively.

\item $X_{7,m} = V(G^2_3) \cup V(G^2_5) \cup T_2 \cup T_4 \cup T_6$, where $T_2$ and $T_6$ are two adjacent vertices of $G^2_2$ and $G^2_6$, respectively, and $T_4$ is the vertex set of a path of order $m-3$ of $G^2_4$.

\end{itemize}

\begin{table}[h]
	\centering
	\begin{tabular}{|l|l|l|}
		\hline
		$n$ & $P_n \circ C_3$ & $P_n \circ C_m, m \geq 4$  \\
		\hline
		$2$ & $3$ & $\ctwo $ \\
		
		$3$ & $5$ & $\cthreeC$  \\
		
		$4$ & $5$ & $\cfour$  \\
		
		$5$ & $7$ & $\cfiveC$ \\
		
		$6$ & $8$ & $\csixC$ \\
		
		$7$ & $10$ & $\csevenC$ \\
		\hline
	\end{tabular}
	\caption{$P_n \circ C_m, 2 \leq n \leq 7$ and $m \geq 3$.}
	\label{tab:P2-7Cm}
\end{table}

\begin{table}[h]
	\centering
	\begin{tabular}{|l|l|l|}
		\hline
		$n$ & $C_n \circ C_3$ & $C_n \circ C_m, m \geq 4$  \\
		\hline
		$3$ & $5$ & $\cthreeCC$ \\
		
		$4$ & $5$ & $\cfour$  \\
		
		$5$ & $7$ & $\cfiveC$ \\
		
		$6$ & $10$ & $\csixC$ \\
		
		$7$ & $10$ & $\csevenC$ \\
		\hline
	\end{tabular}
	\caption{$C_n \circ C_m, 3 \leq n \leq 7$ and $m \geq 3$.}
	\label{tab:C2-7Cm}
\end{table}

Let $F = G^1 \circ G^2$, for $G^1 \simeq P_n$, $G^2 \simeq P_m$, $n \in \{2, \ldots, 7\}$, and $m \geq 3$. We define $Y_{n,m} \subseteq V(F)$ as follows:

\begin{itemize}
	
\item $Y_{2,3} = V(G^2_1) \setminus \{v_1\} \cup \{v_2\}$. For $m \geq 4$, define $X_{2,m} = T_1 \cup T_2$, where $T_1$ and $T_2$ are the vertex sets of paths of order $\lfloor\frac{m}{2}\rfloor$ of $G^2_1$ and $G^2_2$, respectively.

\item For $m \in \{3,4\}$, $Y_{3,m} = V(G^2_3) \cup  \{v_2\}$ for $v_2 \in V(G^2_2)$; $Y_{3,5} = V(G^2_3) \setminus \{u\} \cup T_2$ where $T_2$ contains two adjacent vertices of $G^2_2$; for $m \geq 6$, $Y_{3,m} = V(G^2_3) \cup T_2$, where $T_2$ is the vertex set of a path of $G^2_2$ of order $\lfloor \frac{m-2}{2}\rfloor$.

\item $Y_{4,m} = X_{4,m}$.

\item $Y_{5,3} = V(G^2_3) \cup \{v_1\} \cup \{v_4\}$; 
$Y_{5,4} = V(G^2_3) \cup \{v_2\} \cup \{v_4,v'_4\}$;
for $m \geq 5$, $Y_{5,m} = X_{5,m}$.

\item $Y_{6,3} = V(G^2_3) \cup V(G^2_4) \cup \{v_2\} \cup \{v_5\}$, where $d(v_2) = d(v_5) = 7$; for $m \geq 4$, define $X_{6,m} = V(G^2_3) \cup V(G^2_4) \cup \{v_2\} \cup \{v_5\}$.

\item $Y_{7,m} = V(G^2_3) \cup V(G^2_5) \cup \{v_2\} \cup T_4 \cup \{v_6\}$, where $T_4$ is the vertex set of a path of order $m-2$ of $G^2_4$.
	
\end{itemize}

\begin{table}[h]
	\centering
	\begin{tabular}{|l|l|l|}
		\hline
		$n$ & $P_n \circ P_3$ & $P_n \circ P_m, m \geq 4$  \\
		\hline
		$2$ & $3$ & $\ctwo$ \\
		
		$3$ & $4$ & $m + \mtwoovertwo$ \\
		
		$4$ & $5$ & $\cfour$ \\
		
		$5$ & $5$ & $\cfiveP$ \\
		
		$6$ & $8$ & $\csixP$ \\
		
		$7$ & $9$ & $\csevenP$ \\
		\hline
	\end{tabular}
	\caption{$P_n \circ P_m, 2 \leq n \leq 7$ and $m \geq 3$.}
	\label{tab:P2-7Pm}	
\end{table}

\begin{table}[h]
	\centering
	\begin{tabular}{|l|l|l|}
		\hline
		$n$ & $C_n \circ P_3$ & $C_n \circ P_m, m \geq 4$  \\
		\hline
		$3$ & $5$ & $\cthreeCP$ \\
		
		$4$ & $5$ & $\cfour$ \\
		
		$5$ & $5$ & $\cfiveP$ \\
		
		$6$ & $8$ & $\csixP$ \\
		
		$7$ & $9$ & $\csevenP$ \\
		\hline
	\end{tabular}
	\caption{$C_n \circ P_m, 3 \leq n \leq 7$ and $m \geq 3$.}
	\label{tab:C2-7Pm}	
\end{table}

Denote $x_{i,m} = |X_{i,m}|$ and $y_{i,m} = |Y_{i,m}|$.

\begin{proposition} \label{pro:p2-7G2}
For $n \in \{2, \ldots, 7\}$, $\gamma_a(P_n \circ C_m)$ is given in Table~{\em \ref{tab:P2-7Cm}} and $\gamma_a(P_n \circ P_m)$ is given in Table~{\em \ref{tab:P2-7Pm}}.
\end{proposition}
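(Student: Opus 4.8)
The plan is to prove the proposition by establishing, for each $n \in \{2,\dots,7\}$, matching upper and lower bounds on $\gamma_a(P_n \circ C_m)$ and $\gamma_a(P_n \circ P_m)$ that equal the tabulated values. The upper bounds are supplied by the explicit constructions $X_{n,m}$ and $Y_{n,m}$ given just before the statement: for each one I would verify that (a) it is a dominating set, (b) every vertex in it is defended, and (c) its cardinality equals the claimed value $x_{n,m}$ (resp.\ $y_{n,m}$). Domination and defense are local conditions, and since each $P_n \circ G^2$ has only a bounded number of copies of $G^2$, checking them amounts to a finite case inspection driven by the degree bound $|S \cap N(v)| \geq \lfloor d(v)/2 \rfloor$ noted in the introduction; the degree of a vertex depends only on whether its copy is external or internal and on whether $G^2$ is $C_m$ or $P_m$.

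\textbf{For the lower bounds}, I would route everything through Corollary~\ref{cor:dc}, which reduces $\gamma_a(F)$ to $\min\{val(w)\}$ over feasible sequences $w$. For fixed small $n$ the set of feasible sequences is tiny: by the Proposition characterizing feasibility, $w = (k_1,\dots,k_t)$ must satisfy $k_1 \geq 2$, $k_i \geq 3$ for $i \geq 2$, and $\sum k_i = n$, so for $n \leq 7$ there are only a handful of partitions to enumerate. For each such $w$ I would lower-bound $val(w) = \valE{k_1} + \sum_{2 \leq i \leq t-1}\valI{k_i} + \valE{k_t}$ using Proposition~\ref{pro:basic} (the three-consecutive-copy sums $s_{i-1}+s_i+s_{i+1} \geq m+2$ for $C_m$ or $\geq m+1$ for $P_m$, and the companion bounds $s_{i-1}+s_{i+1} \geq m-1$ or $\geq m$). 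The point is that $\valI{k}$ and $\valE{k}$ for the relevant small $k$ are exactly what these local inequalities pin down, and the minimum over the few feasible $w$ then matches the table.

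\textbf{The main obstacle} I anticipate is the fine structure of the small-$m$ cases, especially $m = 3$, and the tightness of the defense condition when a copy is only partially selected. For $C_m$, clause~$(i)$ of Proposition~\ref{pro:basic} applies only to $C_m$ and $P_3$ together, so $m = 3$ behaves like a cycle case and the constants shift (this is why $X_{6,3}$ and several $Y$-sets are defined separately); I would have to track the $\max\{2,\dots\}$ and floor/ceiling corrections carefully, e.g.\ in $X_{3,m}$ where $x = \max\{2,\lfloor\frac{m-2}{2}\rfloor\}$ and in the rounding of $\ctwo$. The subtle part of the lower bound is ruling out configurations where a vertex of small degree at the boundary of a selected path within a copy fails to be defended; this is exactly the kind of argument made in the proof of Proposition~\ref{pro:lower}~$(iv)$, where one shows that a singleton $s_i = 1$ forces growth in adjacent copies. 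I would expect to reuse that style of local propagation argument to close the gap between the crude sum bounds and the exact table entries, treating $P_n \circ C_m$ first (since $C_m$ has the cleaner degree structure) and then deducing the $P_n \circ P_m$ values by the same method with the weaker constant from clause~$(ii)$, invoking Proposition~\ref{pro:spanning} where a $C_m$-alliance happens to avoid the extra edges of the cycle and thus transfers to the path.
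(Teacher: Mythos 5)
Your upper-bound plan coincides with the paper's: exhibit $X_{n,m}$ and $Y_{n,m}$, verify domination and defense locally via the degree count, and read off the cardinalities. The lower-bound plan, however, has a genuine gap. You propose to route the lower bounds through Corollary~\ref{cor:dc} and then bound $val(w)=\valE{k_1}+\sum\valI{k_i}+\valE{k_t}$ from below. But for every $n\le 7$ the single-element sequence $(n)$ is feasible (it is the spectrum of any GDA with $s_i\ge 1$ for all internal $i$), and $val((n))$ is expressed in terms of $\valE{n}$, which is by definition the minimum size of a GDA of $P_n\circ G^2$ subject to a boundary constraint. Lower-bounding that quantity is not easier than lower-bounding $\gamma_a(P_n\circ G^2)$ itself --- the only a priori relation is $\valE{n}\ge\gamma_a(P_n\circ G^2)$, which is useless until the target inequality is already known. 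So the sequence machinery buys nothing here; it merely relabels the problem. The paper instead takes an arbitrary minimum GDA $S$ of $P_n\circ G^2$ and argues directly on the copy-sizes $s_1,\dots,s_n$: it case-splits on which $s_i$ vanish, applies Propositions~\ref{pro:basic} and~\ref{pro:lower}, and supplements these with bespoke arguments (e.g.\ for $n=5$, $G^2\simeq C_m$, $m\le 4$, setting $s_3=m-k$ and showing $s_1+s_2\ge 2+k$ and $s_4+s_5\ge 2+k$; for $n=2$, showing a boundary vertex with a non-neighbor in its own copy forces $s_2\ge\lfloor m/2\rfloor$). Your closing remark about reusing the ``local propagation'' style of Proposition~\ref{pro:lower}$(iv)$ is the right instinct, but as written it is an afterthought rather than the main engine, and without it the plan does not close.

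Two smaller points. First, Proposition~\ref{pro:spanning} cannot be used to ``deduce the $P_n\circ P_m$ values'' from the $P_n\circ C_m$ ones: it transfers a GDA from the spanning subgraph to the supergraph, which yields $\gamma_a(P_n\circ P_m)\le\gamma_a(P_n\circ C_m)$-type upper bounds, whereas the table entries for $P_m$ are \emph{smaller} than those for $C_m$, so their lower bounds must be re-derived with the weaker constant of Proposition~\ref{pro:basic}$(ii)$ (the paper carries both cases in parallel inside each $n$). Second, the cases $n=6,7$ with $m=3$ and the $\max\{2,\lfloor\frac{m-2}{2}\rfloor\}$ corrections for $n=3$ require separate arguments that neither Proposition~\ref{pro:basic} nor Proposition~\ref{pro:lower} delivers on its own; you correctly flag these as the obstacle but give no mechanism for resolving them.
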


\begin{proof}
It is easy to check that $X_i^m$ is a GDA of $F = P_i \circ G^2$ for $i \in \{2, \ldots, 7\}$ and $G^2 \in \{C_m,P_m\}$. For the converse, let $S$ be a minimum GDA of $F$.

\bigskip
\noindent
Case $i = 2$.
For $m=3$, it is easy to check that there is no GDA of size 2 and that $V(G^2_1)$ is a GDA of the graph. Then assume $m \geq 4$. Since $V(G^2_1)$ is not a GDA, $(V(G^2_1) \setminus\{v_1\}) \cup \{v_2\}$ is not a a GDA for $v_1 \in V(G^2_2)$ and $v_2 \in V(G^2_2)$, and $x_2 \leq m$, it holds $2 \leq s_1 < m$ and $2 \leq s_2 < m$ for a minimum GDA $S$. Then, we can assume that there is a vertex $v \in S \cap V(G^2_1)$ such that $d(v) = m+2$ and having a neighbor in $V(G^2_1) \setminus S$. Therefore, we have $s_2 \geq \lfloor \frac{m+2}{2} \rfloor - 1 = \lfloor \frac{m}{2} \rfloor$. Since the same does hold for $s_1$, the result is true.

\bigskip
\noindent 
Case $i = 3$.
Suppose that $|S| < \cthreeC = x_3^m < 2m$. If $s_2 = 0$, then we can assume that $v \in V(G^2_1)$ has at most one neighbor in $S$, then $v$ is not defended in $S$. Hence $s_2 \geq 1$.
First, consider $m \in \{3,4,5\}$ and $G_2 \simeq C_m$. Since $d(v) = 2m+2$ for $v \in V(G^2_2)$, we have $|S \cap N(v)| < \frac{d(v)}{2}$, a contradiction.
Case $m \in \{3,4,5\}$ and $G_2 \simeq P_m$ is direct from Proposition~\ref{pro:basic}~$(ii)$.

Now, consider $m \geq 6$. We can now write $|S| < m + \mtwoovertwo$.
By Proposition~\ref{pro:basic}, $s_1 + s_3 \geq m-1$. Since $m - 1 > \mtwoovertwo$ for $m \geq 4$, we have $s_2 < m$. Consequently, by Proposition~\ref{pro:basic} again, we have $m \leq s_1 + s_3 < 2m$. Then, without loss of generality, there is a vertex in $V(G^2_1)$ having at most one neighbor in $S \cap V(G^2_1)$. Therefore $s_2 \geq \lfloor\frac{m+2}{2}\rfloor - 1 = \lfloor\frac{m}{2} \rfloor$, which means that $|S| \geq m + \lfloor\frac{m}{2} \rfloor > x_3$, a contradiction.

\bigskip
\noindent 
Case $i = 4$.
First, consider
$s_2 = 0$ and $s_3 = 0$. Then $m = 3$, $s_1 = s_4 = m$, and $|S| = 6 > x_4^3 = 5$. Next, consider $s_2 = 0$ and $s_3 \geq 1$. Then $m = 3$, $s_1 = m$ and, using Proposition~\ref{pro:basic}, $s_2 + s_3 + s_4 \geq m+2 = 5$. Which means $|S| \geq 8 > x_4^3 = 5$. The case $s_2 \geq 1$ and $s_3 = 0$ is analogue. 
Then $s_2 \geq 1$ and $s_3 \geq 1$. By Proposition~\ref{pro:lower}, $|S| \geq 2m-1$, completing the proof for $i = 4$.

\bigskip
\noindent 
Case $i = 5$. Since $x_5^m = 2m-1$ for $m \geq 5$, and $\gamma_a(P_5 \circ G^2) \geq \gamma_a(P_4 \circ G^2)$ for $G^2 \in \{C_m,P_m\}$, case $i=4$ implies, for $m\geq5$, $|S| \geq x_5^m$.
The same argument holds for $m \leq 4$ and $G^2 \simeq P_m$.

For $m \leq 4$ and $G^2 \simeq C_m$, we have $x_5^m = m + 4$. If $s_2 = 0$, then $m = 3$ and $s_1 = 3$. Since $s_3$ and $s_4$ are both not equal to 0, then $s_2 + s_3 + s_4 + s_5 \geq m+2$, which means $|S| > m + 4$. Then $s_2 \geq 1$ and $s_4 \geq 1$.
Suppose that $s_3 = m - k$ for $k \geq 1$. This implies that $s_1 + s_2 \geq 2 + k$ and $s_4 + s_5 \geq 2 + k$.
Then $|S| \geq m - k + 2 + k +2 + k = m + 4 + k$, a contradiction. Therefore $s_3 = m$. Since every vertex of $S_2 \cup S_4$ has degree $2m+2$, $s_1 + s_2 \geq 2$ and $s_4 + s_5 \geq 2$, which implies that $|S| \geq m + 4$.

\bigskip
\noindent 
Case $i = 6$. First, consider $m = 3$ and $G^2 \simeq C_m$. If $s_2 \geq 1$ and $s_5 \geq 1$, then $s_1 + s_2 + s_3 \geq 5$ and $s_4 + s_5 + s_6 \geq 5$, which means $|S| > x_6^3 = 8$. If $s_2 = 0$ and $s_5 = 0$, then $s_1 = s_6 = 3$, furthermore $s_3 + s_4 \geq 5$, which means $|S| > x_6^3$.
Then, without loss of generality, we can assume $s_2 = 0$ and $s_5 \geq 1$. This implies $s_1 = 3$ and $s_4 + s_5 + s_6 \geq 5$, which means $|S| \geq x_6^3 = 8$.

Next, consider $m = 3$ and $G^2 \simeq P_m$. Since $V(G^2_1)$ is not a GDA of $P_2 \circ P_3$, then $s_2 \geq 1$ and $s_5 \geq 1$.  Observe that a vertex $v \in S_2$ needs at least four neighbors in $S$ because $d(v) = 8$. Then $s_1 + s_2 + s_3 \geq 5$ and $s_4 + s_5 + s_6 \geq 5$.

Now, consider $m \geq 4$. It is clear that $s_2 \geq 1$ and $s_5 \geq 1$. By Proposition~\ref{pro:basic}, $s_1 + s_2 + s_3 \geq m+2$ for $G^2 \simeq C_m$ and $s_1 + s_2 + s_3 \geq m+1$ for $G^2 \simeq P_m$. By symmetry, $s_4 + s_5 + s_6 \geq m+2$ for $G^2 \simeq C_m$ and $s_4 + s_5 + s_6 \geq m+2$ for $G^2 \simeq P_m$.  Therefore $|S| \geq 2m+4$ for $G^2 \simeq C_m$ and $|S| \geq 2m+2$ for $G^2 \simeq P_m$.

\bigskip
\noindent 
Case $i = 7$.
If $s_2 = 0$, then $m = 3, G^2 \simeq C_m$, and $s_1 = 3$. If $s_3 \geq 1$, then $s_3 + s_4 \geq 5$. Since $s_5 + s_6 + s_7 \geq 3$, we have $|S| > x_7^3$. Then $s_3 = 0$. This implies that $s_4 \geq 1$, which means $s_4 + s_5 \geq 5$. Therefore $|S| < x_7^3$ if $s_6 + s_7 = 1$. But the vertex of $S_6 \cup S_7$ is not defendend in $S$.

Then, consider $s_2 \geq 1$ and $s_6 \geq 1$.
If $s'_3 = 0$, then cases $i = 3$ and $i = 5$ imply that $s_1 + s_2 + s_3 \geq m + \max\{2,\lfloor \frac{m-2}{2} \rfloor\}$ and $s_3 + s_4 + s_5 + s_6 + s_7 \geq \cfiveC$ which is at least $x_7^m$ for any $m \geq 3$. Then, we can consider $s_3 \geq 1$ and $s_5 \geq 1$.
Since $s_1+s_3$ and $s_2+s_4$ cannot both be equal to $m-1$ and 
for $G^2 \simeq C_m$, we have $s_5 + s_6 + s_7 \geq m+2$,
for $G^2 \simeq P_m$, we have $s_5 + s_6 + s_7 \geq m+1$, 
we have $|S| \geq m-1 + m + m + 2 = 3m + 1$ for $G^2 \simeq C_m$, and 
we have $|S| \geq m-1 + m + m + 2 = 3m$ for $G^2 \simeq P_m$.
\end{proof}

\begin{corollary} \label{cor:c2-7G2}
For $n \in \{3, \ldots, 7\}$, $\gamma_a(C_n \circ C_m)$ is given in Table~{\em \ref{tab:C2-7Cm}} and $\gamma_a(C_n \circ P_m)$ is given in Table~{\em \ref{tab:C2-7Pm}}.
\end{corollary}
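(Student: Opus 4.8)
The plan is to derive Corollary \ref{cor:c2-7G2} from Proposition \ref{pro:p2-7G2} by exploiting that $P_n \circ G^2$ is a spanning subgraph of $C_n \circ G^2$ whose missing edges are exactly those joining the first and last copies $G^2_1$ and $G^2_n$. First I would record, by inspecting the extremal sets built in the proof of Proposition \ref{pro:p2-7G2}, which of the configurations $X_{n,m}$ and $Y_{n,m}$ satisfy $S_1 = S_n = \emptyset$. For $n \in \{4,5,7\}$ (every $m$) and for $n = 6$ with $m \ge 4$, the relevant minimum sets place all of $S$ in the internal copies $G^2_2, \ldots, G^2_{n-1}$; a tiny adjustment settles the only exception among these, the set $Y_{5,3}$, by relocating its copy-$1$ vertex to an internal copy. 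For all these cases Proposition \ref{pro:spanning}, applied with $G = P_n \circ G^2$ and $G' = C_n \circ G^2$, shows that the same set is a GDA of the cycle product, so $\gamma_a(C_n \circ G^2) \le \gamma_a(P_n \circ G^2)$, which is the corresponding entry of Tables \ref{tab:C2-7Cm} and \ref{tab:C2-7Pm}.

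For the matching lower bounds I would re-run the relevant case of the proof of Proposition \ref{pro:p2-7G2}. The point is that Propositions \ref{pro:basic} and \ref{pro:lower} are already stated for $G^1 \in \{P_n,C_n\}$, so every estimate phrased through the internal indices $2 \le i \le n-1$ transfers verbatim. Only the steps inspecting the boundary copies $G^2_1, G^2_n$ need attention, since in the cycle each of them gains $m$ neighbours in the opposite boundary copy. I would dispose of these via the rotational symmetry of $C_n$: when a minimum GDA has an empty copy I rotate it to position $n$, so that the boundary vertices of $S$ (in copies $1$ and $n-1$) see their extra cycle-neighbours entirely outside $S$ and the defense count can only worsen relative to the path; a short domination check at the boundary then shows such an $S$ is also a GDA of $P_n \circ G^2$, giving $\gamma_a(C_n \circ G^2) \ge \gamma_a(P_n \circ G^2)$. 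The subcase with no empty copy forces all $s_i \ge 1$ and is settled by the corresponding estimates of Proposition \ref{pro:basic} (and Proposition \ref{pro:lower}, where $n$ permits).

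The genuinely different behaviour occurs in the exceptional cases $n = 3$ (all $m$) and $(n,m,G^2) = (6,3,C_3)$, where the cycle value strictly exceeds the path value and must be computed from scratch. For $n = 3$ the product $C_3 \circ G^2$ is the join of three copies of $G^2$, so domination is automatic as soon as two copies meet $S$, and the whole problem reduces to defense; writing $\delta_v = |N_{G^2}(u) \cap S_i|$ for $v = (i,u) \in S$, defense reads $|S| - s_i + \delta_v \ge m+1$ for $G^2 = C_m$ (and its degree-adjusted analogue for $P_m$). For the upper bound I take each $S_i$ to be a block of consecutive vertices of the appropriate size; for the lower bound I use that every nonempty proper $S_i$ contains a vertex with $\delta_v \le 1$, which yields $|S| \ge s_i + m$ for each such copy, and then combine these per-copy inequalities with $\sum_i s_i = |S|$, treating empty and full copies separately. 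The case $(6,3,C_3)$ is a short direct argument in the spirit of case $i = 6$ of Proposition \ref{pro:p2-7G2}, modified because the wrap-around edges between $G^2_1$ and $G^2_6$ rule out the path-optimal set $X_{6,3}$.

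The main obstacle is precisely the lower bound for $n = 3$. The section-decomposition of Corollary \ref{cor:dc} gives no leverage here: with only three copies the single feasible sequence is $(3)$, and because $C_3$ makes all three copies pairwise adjacent there is no empty copy separating sections, so the path-based quantity $val_i(3) = val_i(4)$ does not model the join at all. Consequently the bound must come entirely from balancing the per-copy defense deficits against $\sum_i s_i$, and extracting the exact constant — the correct rounding together with the identification of the extremal block sizes in each copy — is the delicate step on which the precise table entries for $n=3$ ultimately rest.
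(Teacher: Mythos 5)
Your overall route is the same as the paper's: for $n \in \{4,\dots,7\}$ the paper also just invokes Proposition~\ref{pro:spanning} together with Proposition~\ref{pro:p2-7G2}, and for $n=3$ it gives a direct argument on the join of three copies. Two remarks to your credit: you correctly observe that the sets $X_{n,m}$, $Y_{n,m}$ must be checked (and occasionally shifted) to avoid the copies $G^2_1$ and $G^2_n$ before Proposition~\ref{pro:spanning} applies, and you correctly isolate $C_6 \circ C_3$ as a case the spanning argument cannot cover, since the table value there is $10$ while $\gamma_a(P_6\circ C_3)=8$; the paper's one-line justification silently skips this case, so your separate direct argument for it is genuinely needed.

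The unresolved step you flag for $n=3$ is, however, a real gap and not merely a delicate rounding: your per-copy deficit inequalities cannot be pushed up to the table entries, because those entries are not what the deficit analysis produces. For $v=(i,u)\in S$ in $C_3\circ C_m$ the defense condition is $|S|-s_i+\delta_v\ge m+1$, and taking in each nonempty proper copy a vertex with $\delta_v\le 1$ gives $|S|\ge s_i+m$; combined with $\sum_i s_i=|S|$ this yields only $|S|\ge\lceil 3m/2\rceil$, which is one less than $3\lceil m/2\rceil$ for odd $m$. That shortfall cannot be repaired, since for odd $m\ge 5$ the value $\lceil 3m/2\rceil$ is attained: take $S_1,S_2$ to be arcs of $\frac{m+1}{2}$ consecutive vertices and $S_3$ an arc of $\frac{m-1}{2}$ consecutive vertices; every $v\in S_i$ then satisfies $|N(v)\cap S|\ge 1+(|S|-s_i)\ge m+1$, so $S$ is a GDA of size $\frac{3m+1}{2}$ (e.g.\ $8<9$ for $C_3\circ C_5$). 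The paper's own proof obtains the extra unit only through the assertion that ``by the symmetry of the graph'' a minimum GDA has $s_1=s_2=s_3$, which is not justified and is exactly what your balanced-deficit computation refutes. So if you carry out your plan carefully you will arrive at a formula for $n=3$ that disagrees with Table~\ref{tab:C2-7Cm}; you should state that explicitly rather than treat it as a rounding issue still to be settled, and the same scrutiny should be applied to the $C_3\circ P_m$ entry before you attempt to match it.
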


\begin{proof}
Proposition~\ref{pro:lower} implies that $\gamma_a(C_3 \circ C_3) \geq 5$ and $\gamma_a(C_3 \circ P_3) \geq 4$. It is easy to check that $C_3 \circ P_3$ has no GDA with less than 5 vertices, then since $X_{3,3}$ and $Y_{3,3}$ are GDAs of $C_3 \circ C_3$ and $C_3 \circ P_3$, respectively, $\gamma_a(C_3 \circ C_3) = \gamma_a(C_3 \circ P_3) = 5$.

Now, consider $n = 3$, $m \geq 4$, and let $S$ be a minimum GDA of $C_n \circ G^2$ for $G^2 \in \{C_m,P_m\}$. We can assume that $v_2 \in S_2$ and $v_3 \in S_3$. Since $d(v_2) = d(v_3) \in \{2m+1, 2m+2\}$, $s_1 + s_3 \geq m-1$ and $s_1 + s_2 \geq m-1$. If $s_1 = 0$, then $|S| \geq 2m-2$. Therefore, we can assume $s_1 \neq 1$, which implies, by the symmetry of the graph, that $s_1 = s_2 = s_3 = k$. If $d(v_2) = 2m+1$ and $k < \lfloor \frac{m}{2} \rfloor$, then $S$ is not a GDA. Therefore $\gamma_a(C_3 \circ P_n) = \cthreeCP$ for $m \geq 4$. If $d(v_2) = 2m+2$ and $k < \lceil \frac{m}{2} \rceil$, then $S$ is not a GDA. Therefore $\gamma_a(C_3 \circ C_n) = \cthreeCC$ for $m \geq 4$. 

The cases $n \in \{4, 5, 6, 7\}$ are consequence of Propositons~\ref{pro:spanning} and~\ref{pro:p2-7G2}.
\end{proof}

\begin{corollary}
For $k \in \{3, \ldots, 7\}$, $G^1 \in \{C_n,P_n\}$, and $G^2 \in \{C_m,P_m\}$, there is a minimum GDA $S$ of $G^1 \circ G^2$ such that $max(spe(S)) \leq k$.
\end{corollary}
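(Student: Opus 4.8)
The plan is to argue entirely at the level of feasible sequences, using the decomposition in Corollary~\ref{cor:dc}. Since $\gamma_a(G^1 \circ G^2) = \min\{val(w)\}$ over feasible sequences $w$, and Proposition~\ref{pro:valGDA} produces from any minimizing $w$ a minimum GDA $S$ with $spe(S) = w$, it suffices to show that the minimum of $val(w)$ is attained by some feasible $w$ all of whose entries lie in $\{2,\ldots,7\}$; this yields $max(spe(S)) \le 7$. I would prove this by a replacement argument: starting from any minimizer $w = (k_1,\ldots,k_t)$, as long as some entry $k_j \ge 8$ exists, I replace it by a pair summing to $k_j$, concretely peeling off a block of four, $k_j \mapsto (4, k_j-4)$, which keeps the sequence feasible since $k_j - 4 \ge 4 \ge 3$. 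Because every $k_j \ge 8$ reaches the range $\{4,5,6,7\}$ by repeatedly subtracting $4$, finitely many such peelings bring all entries into $\{2,\ldots,7\}$, provided each peeling does not increase $val(w)$.

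The core of the argument is therefore that each peeling is cost-neutral, i.e. $\valI{k} = \valI{4} + \valI{k-4}$ for internal entries and $\valE{k} = \valI{4} + \valE{k-4}$ for the external entries of a path, for all $k \ge 8$. The inequality ``$\le$'' is the easy construction direction: I place an optimal four-section (for instance $X_{4,m}$, which satisfies $s_1 = s_4 = 0$ and has size $\valI{4} = 2m-1$ by Table~\ref{tab:P2-7Cm}) next to an optimal section on the remaining $k-4$ copies; the two empty copies meeting at the seam do not interact, so the union is again a valid section of the required type and its size is the sum. The inequality ``$\ge$'' is where Proposition~\ref{pro:lower} enters: taking an optimal section $S'$ on $P_k \circ G^2$ with the prescribed empty boundary copies, either $S'$ has an empty internal copy, in which case it already splits into strictly shorter sections and I finish by induction on $k$, or every internal copy is nonempty, and then Proposition~\ref{pro:lower} gives $|S'| \ge (2m-1)\lfloor k/4\rfloor + t$ with $t$ the residue term. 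Since $k$ and $k-4$ share the same residue modulo $4$, this bound equals $\valI{4} + \valI{k-4}$ once the formula for $\valI{k-4}$ is known inductively.

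The main obstacle is the ``$\ge$'' direction, and within it the bookkeeping that matches Proposition~\ref{pro:lower} to the tiled value. One must verify that the residue term $t$, which takes different values for $G^2 \simeq C_m$ and $G^2 \simeq P_m$ and for each $k \equiv 0,1,2,3 \pmod 4$, coincides exactly with the cost of the corresponding leftover block of size in $\{4,5,6,7\}$ read off from Tables~\ref{tab:P2-7Cm}--\ref{tab:C2-7Pm}, so that no slack is lost. One must also handle the empty-internal-copy case so that the two resulting parts remain feasible, each of size at least $3$ with the lone size-$2$ part allowed only as the leading external entry $k_1$, absorbing a near-boundary empty copy into its neighboring block via $\valI{2} = \valI{3} = \valI{4}$. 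Once this arithmetic is checked for both $G^1 \in \{C_n, P_n\}$, the cyclic case having no external entries and the path case two of them, the exchange argument closes and a minimum GDA with $max(spe(S)) \le 7$ is obtained.
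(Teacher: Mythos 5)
There is a genuine mismatch here, on two levels. First, you are proving a different statement from the one this corollary makes. Here $k$ denotes the order of $G^1$ (the corollary closes the subsection treating $n\in\{2,\ldots,7\}$), so the claim is only that for $n=k\le 7$ some minimum GDA $S$ has $max(spe(S))\le k$. Since every spectrum $(k_1,\ldots,k_t)$ satisfies $\sum_i k_i=n$, \emph{every} GDA of $G^1\circ G^2$ has $max(spe(S))\le n=k$, so the corollary is immediate from the existence of the minimum GDAs $X_{k,m}$ and $Y_{k,m}$ of Proposition~\ref{pro:p2-7G2} and Corollary~\ref{cor:c2-7G2}; no exchange argument is needed and the paper supplies none. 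Read instead as a statement for arbitrary $n$, the corollary would be false for $k=3$: for $P_8\circ C_4$ the only feasible sequence with all parts at most $3$ is $(2,3,3)$, of value $\valE[C_4]{3}+\valI[C_4]{4}+\valE[C_4]{3}=6+7+6=19$, while $(4,4)$ gives $14$. So your reformulation ``it suffices to bring all entries into $\{2,\ldots,7\}$'' is really the content of the later Propositions~\ref{pro:atmost6} and~\ref{pro:atmost7}, not of this corollary.

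Second, even as a proof of that later statement, your peeling step fails: the identity $\valI{k}=\valI{4}+\valI{k-4}$ is false, and so is the inequality $\valI{k}\ge\valI{4}+\valI{k-4}$ that the replacement $k_j\mapsto(4,k_j-4)$ actually requires in order not to increase $val$. For $G^2\simeq C_m$ with $m\ge 5$, concatenating two optimal $5$-sections (two copies of $X_{5,m}$, each with empty first and last copies) shows $\valI[C_m]{10}\le 2\,\valI[C_m]{5}=2(2m-1)=4m-2$, whereas $\valI[C_m]{4}+\valI[C_m]{6}\ge(2m-1)+(2m+4)=4m+3$. Peeling a $4$ off a $10$ therefore strictly increases the value, and in your empty-internal-copy branch the induction would have to establish $\valI[C_m]{5}+\valI[C_m]{5}\ge\valI[C_m]{4}+\valI[C_m]{6}$, i.e.\ $4m-2\ge 4m+3$. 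The underlying reason is that for larger $m$ the cost-efficient tiles are blocks of size $5$ and $6$ (costs $2m-1$ and $2m+4$), not $4$; this is exactly why Propositions~\ref{pro:atmost6} and~\ref{pro:atmost7} replace a long section by residue-dependent sequences built mostly from $5$'s and $6$'s and compare them case by case against the lower bounds of Proposition~\ref{pro:lower}, rather than peeling off $4$'s uniformly.
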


\subsection{Case $n \geq 8$} \label{sec:nbig}

We begin this section presenting a hyerarchy of $\gamma_a(G^1 \circ G^2)$ which depends of the operands and is consequence of the previous results.

\begin{corollary} \label{cor:hierarchy}
	For $n \geq 2$ and $m \geq 3$, it holds $\gamma_a(P_n \circ P_m) \leq \gamma_a(C_n \circ P_m) \leq  \gamma_a(C_n \circ C_m)$ and	$\gamma_a(P_n \circ P_m) \leq \gamma_a(P_n \circ C_m) \leq \gamma_a(C_n \circ C_m)$.
\end{corollary}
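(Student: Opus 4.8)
The plan is to reduce both chains to termwise comparisons of the quantities $\valI{k}$ and $\valE{k}$ and then assemble them through Corollary~\ref{cor:dc}. The starting observation is that, by the characterization of feasible sequences, the \emph{set} of feasible sequences of $G^1 \circ G^2$ depends only on $n$ (through $k_1 \geq 2$, $k_i \geq 3$ for $i \geq 2$, and $\sum_i k_i = n$) and not on whether $G^1 \simeq P_n$ or $G^1 \simeq C_n$. Hence, writing $W$ for this common index set, Corollary~\ref{cor:dc} gives $\gamma_a(G^1 \circ G^2) = \min_{w \in W} \valT{G^2}{w}$ in both cases. Consequently each of the four inequalities follows once I compare the summands $\valE{k}$ and $\valI{k}$ appropriately, because a termwise inequality between the two $val$-expressions, valid for every fixed $w \in W$, is preserved under taking the minimum over $W$.

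For the two inequalities that fix $G^2$ and replace $P_n$ by $C_n$ (namely $\gamma_a(P_n \circ P_m) \leq \gamma_a(C_n \circ P_m)$ and $\gamma_a(P_n \circ C_m) \leq \gamma_a(C_n \circ C_m)$) I would use $\valE{k} \leq \valI{k}$. For $k \geq 4$ this is immediate: the GDAs counted by $\valI{k}$ (those with $s_1 = s_k = 0$) form a subset of those counted by $\valE{k}$ (those with $s_1 = 0$), so relaxing a constraint cannot increase the minimum. The cases $k \in \{2,3\}$ reduce to $\valE{3} \leq \valI{4}$, which holds because restricting a minimum GDA of $P_4 \circ G^2$ with $s_1 = s_4 = 0$ to its first three copies deletes only between-copy edges whose far endpoint lies in the empty copy $4$, hence removes only non-$S$ neighbors, leaving a GDA of $P_3 \circ G^2$ with $s_1 = 0$ of the same size. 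Since $\valT[P_n]{G^2}{w}$ differs from $\valT[C_n]{G^2}{w}$ only by replacing the two boundary terms $\valI{k_1},\valI{k_t}$ with $\valE{k_1},\valE{k_t}$, these termwise inequalities yield the two claims.

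For the inequalities that fix $G^1$ and replace $P_m$ by $C_m$ I would prove the monotonicities $\valI[P_m]{k} \leq \valI[C_m]{k}$ and $\valE[P_m]{k} \leq \valE[C_m]{k}$ by a spanning-subgraph argument, since $P_k \circ P_m$ is obtained from $P_k \circ C_m$ by deleting, inside each copy, the single edge that closes the cycle. Start from a minimum GDA $S$ realizing the $C_m$-value. Any vertex of $S$ forced into an interior copy of $P_k$ has first-coordinate degree $2$, hence even total degree $2m+2$, and a direct check shows that deleting one incident within-copy edge from an even-degree vertex preserves its defense inequality: if the deleted neighbor lies in $S$ both sides drop by one, and if it does not, only an adversary is lost. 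Domination survives because a non-$S$ vertex is dominated through the complete join to any nonempty neighboring copy, and for $m \geq 4$ the bound $|S \cap N(v)| \geq \lfloor d(v)/2 \rfloor \geq 3$ rules out an $S$-vertex all of whose neighboring copies are empty, so such a copy is always available (the finitely many $m = 3$ configurations being checked directly). For $\valI$, where $s_1 = s_k = 0$, all of $S$ lies in interior copies, so $S$ is at once a GDA of $P_k \circ P_m$ with $s_1 = s_k = 0$, establishing $\valI[P_m]{k} \leq \valI[C_m]{k}$. As $\valT[C_n]{G^2}{w} = \sum_i \valI{k_i}$ uses only the $\valI$ values, the inequality $\gamma_a(C_n \circ P_m) \leq \gamma_a(C_n \circ C_m)$ already follows termwise, with no recourse to $\valE$.

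I expect the genuine obstacle to be the $\valE$ case with $m$ odd, which is exactly what is needed for the last inequality $\gamma_a(P_n \circ P_m) \leq \gamma_a(P_n \circ C_m)$, since there the two external sections contribute $\valE$. In that case $s_1 = 0$ but the outer copy $k$ may be nonempty, and its vertices have first-coordinate degree $1$, hence odd total degree $2m+1$ and only one unit of defensive slack; if a deleted within-copy edge joins two vertices of $S$ in copy $k$, defense can fail. I would resolve this using the freedom to choose which edge of $C_m$ to delete — all choices produce isomorphic copies of $P_k \circ P_m$ — selecting an edge with an endpoint outside $S_k$, which confines the loss to a vertex shedding only a non-$S$ neighbor. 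The sole remaining case $S_k = V(C_m)$ admits no such edge; here I would either show that fully occupying the end copy forces enough slack in copy $k-1$ to absorb the deletion, or invoke Corollary~\ref{cor:external23} to assume the optimal external section is small and rule the situation out. Note that the two inequalities with $G^1 \simeq C_n$ are untouched by this difficulty precisely because $\valT[C_n]{G^2}{w}$ never involves $\valE$.
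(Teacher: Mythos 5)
Your reduction of the two ``replace $P_n$ by $C_n$'' inequalities to the termwise bound $\valE{k}\leq\valI{k}$ over the common set of feasible sequences is correct and is essentially the paper's argument (the paper cites only Corollary~\ref{cor:dc} for these). Your treatment of $\gamma_a(C_n\circ P_m)\leq\gamma_a(C_n\circ C_m)$ is also sound but takes a genuinely different route: the paper compares the explicit closed formulas coming from Proposition~\ref{pro:p2-7G2} and Corollary~\ref{cor:c2-7G2}, whereas you transfer a witness by deleting one cycle edge per copy, which works because every vertex of $S$ in an internal section has even degree $2m+2$ and so tolerates the loss of one $S$-neighbour. That part is a clean structural alternative to the paper's formula-checking.

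The gap is in the fourth inequality, $\gamma_a(P_n\circ P_m)\leq\gamma_a(P_n\circ C_m)$, and neither of your proposed repairs closes it. Take $k=3$ and odd $m\geq 7$: every GDA of $P_3\circ C_m$ with $s_1=0$ and cardinality $\valE[C_m]{3}=\cthreeC$ must have $S_3=V(G^2_3)$, because domination forces $s_2\geq 1$, a vertex of $S_2$ has degree $2m+2$ with at most two $S$-neighbours in its own copy and none in copy $1$, so $s_3\geq m-1$, and $s_3=m-1$ would force $S_2$ to induce a $2$-regular subgraph of $C_m$, i.e.\ $s_2=m$, exceeding the budget. For such a witness --- the paper's own $X_{3,m}$ --- each $v\in S_3$ has degree $m+2$ (not $2m+1$ as you write, though the parity conclusion is the same) and exactly $\lfloor\frac{m+2}{2}\rfloor=\frac{m+1}{2}$ neighbours in $S$, so its defence is tight; moreover every edge of copy $3$ joins two $S$-vertices, so deleting any one of them destroys the alliance. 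Hence your Option A (a full end copy forces slack in copy $k-1$) is false, the freedom to choose which edge to delete is unavailable, and Corollary~\ref{cor:external23} constrains only which parts of the spectrum are external, not the occupancy of the last copy of a section. The inequality $\valE[P_m]{3}\leq\valE[C_m]{3}$ is still true --- both sides equal $m+\mtwoovertwo$ here --- but it must be obtained by evaluating the two quantities separately (the paper's route via the tables), not by transferring a witness; as written, your proof does not establish the last chain.
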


\begin{proof}
	$\gamma_a(P_n \circ P_m) \leq \gamma_a(C_n \circ P_m)$ and $\gamma_a(P_n \circ C_m) \leq \gamma_a(C_n \circ C_m)$ are consequences of Corollary~\ref{cor:dc}, while $\gamma_a(C_n \circ P_m) \leq  \gamma_a(C_n \circ C_m)$ and 
	$\gamma_a(P_n \circ P_m) \leq \gamma_a(P_n \circ C_m)$ are consequence of Corollaries~\ref{cor:dc}, ~\ref{cor:c2-7G2}, and Proposition~\ref{pro:p2-7G2}.
\end{proof}

Now, we consider the case where $G^1$ has order at least 8. We divide the study into two cases, $m = 3$ and $m \geq 4$.

\subsubsection{Case $m = 3$}

\begin{proposition} \label{pro:atmost6}
For $n \geq 8, G^1 \in \{P_n, C_n\}$, and $G^2 \in \{P_3, C_3\}$, there is minimum GDA $S$ of $G^1 \circ G^2$ such that $max(spe(S)) \leq 6$.
\end{proposition}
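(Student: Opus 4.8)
The plan is to argue entirely at the level of feasible sequences. By Corollary~\ref{cor:dc} we have $\gamma_a(F)=\min\{val(G^1,G^2,w)\}$ over feasible sequences $w$, and Proposition~\ref{pro:valGDA} lets us realize any optimal feasible sequence by an actual GDA of the predicted size. Hence it suffices to prove the purely combinatorial statement that, among the feasible sequences $w$ attaining the minimum of $val$, at least one has $max(w)\le 6$; once this is established, Proposition~\ref{pro:valGDA} produces a minimum GDA $S$ with $spe(S)=w$, so that $max(spe(S))\le 6$.

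To get such a sequence I would start from any optimal $w=(k_1,\ldots,k_t)$ and repeatedly \emph{break} each part $k_i\ge 7$ into two feasible parts of smaller size without increasing $val$. The arithmetic input is the behaviour of $val_i$ and $val_e$ for $m=3$. Here the decisive fact is Proposition~\ref{pro:basic}$(\ref{ite:m2})$, which holds for \emph{both} $C_3$ and $P_3$ and forces $s_{i-1}+s_i+s_{i+1}\ge 5$ around every occupied copy; this unifies the two choices of $G^2$ and is exactly the bound that makes a two-occupied/two-empty pattern (a ``size-$4$ block'') the cheapest way to cover copies, at cost $5$ per four copies. Using this lower bound together with explicit size-$4$ and size-$5$ block constructions for the upper bound, I would pin down the values $val_i(k)$ and $val_e(k)$ for $k\le 6$ and, more importantly, establish the splitting inequalities: for every $k\ge 7$ there is $p\in\{4,5\}$ with $k-p\ge 3$ such that
\begin{equation*}
val_i(k)\ \ge\ val_i(p)+val_i(k-p), \qquad val_e(k)\ \ge\ val_i(p)+val_e(k-p).
\end{equation*}
The alternative value $p=5$ is needed precisely to absorb the residue $k\bmod 4$, because the per-block cost is not monotone in the block size (a single size-$6$ block is no cheaper than two size-$3$ blocks), so the part peeled off must be chosen according to $k\bmod 4$.

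With these inequalities the induction is immediate. Replacing an internal part $k_i\ge 7$ by $(p,\,k_i-p)$, or a terminal part of an external section by the split that keeps the free block on the outer end, yields a feasible sequence whose value does not increase: all new internal parts are $\ge 3$ and $k_1\ge 2$ is preserved. Iterating drives $max(w)$ below $7$. The cyclic case $G^1=C_n$ uses only the $val_i$ inequality, since $val(C_n,G^2,w)=\sum_i val_i(k_i)$, whereas the path case $G^1=P_n$ additionally invokes the $val_e$ inequality for the two terminal sections; in the path case I would also use Corollary~\ref{cor:external23} to keep the external parts small so that the end-feasibility constraint $k_t\ge 3$ is never violated when a large terminal part is broken.

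I expect the main obstacle to be the rigorous determination of the small-$k$ values of $val_i$ and $val_e$ and the verification of the splitting inequalities \emph{uniformly} over the residue classes of $k\bmod 4$: since these quantities already optimize over internal gaps, a literal size-$k$ section is cheapest when tiled by size-$4$ blocks, and the delicate bookkeeping lies in matching the lower bound coming from Proposition~\ref{pro:basic}$(\ref{ite:m2})$ (and, for the longer residues, Proposition~\ref{pro:lower}$(\ref{ite:m3})$) against these block constructions, together with checking that breaking a part never violates feasibility at the ends of a path.
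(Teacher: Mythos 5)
Your overall strategy is the paper's: pass to feasible sequences via Proposition~\ref{pro:valGDA} and Corollary~\ref{cor:dc}, lower-bound a long section through Proposition~\ref{pro:lower}~$(\ref{ite:m3})$, and beat that bound with blocks of size at most $6$ whose costs are known from the $n\le 7$ tables. The one substantive difference is the mechanism: you peel off one block of size $4$ or $5$ at a time via superadditivity inequalities for the functions $val_i$ and $val_e$, whereas the paper replaces the entire part $k_i$ by an explicit $k_i$-sequence of parts at most $6$ (chosen according to $k_i \bmod 4$) in a single step. This difference is not cosmetic, and it is where your plan has a genuine gap.

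The splitting inequalities you want, e.g.\ $val_i(k,C_3)\ge val_i(p,C_3)+val_i(k-p,C_3)$, cannot be derived by applying Proposition~\ref{pro:basic}~$(\ref{ite:m2})$ or Proposition~\ref{pro:lower}~$(\ref{ite:m3})$ to $val_i(k,C_3)$ itself, because those bounds require $s_j\ge 1$ on all interior copies, and for $k\ge 7$ the minimizers defining $val_i(k,C_3)$ violate this. Concretely, $val_i(7,C_3)=10$ (the set $X_{7,3}$ has $s_1=s_4=s_7=0$), while Proposition~\ref{pro:lower}~$(\ref{ite:m3})$ would give the false bound $6\cdot 1+5=11$; so the lower bound you invoke is simply not available for the quantity you apply it to, and after your first split the remaining long part $k_i-p$ is exactly such a quantity. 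The bound \emph{is} available for the restriction of the original minimum GDA $S$ to the section associated with $k_i$ in $spe(S)$, because the definition of the spectrum forces all interior copies of a section to be nonempty. That is why the paper applies the lower bound exactly once per long part and compares it against a complete replacement sequence of parts at most $6$; your iteration would either have to do the same, or establish the superadditivity of $val_i$ by a separate induction on $k$ (splitting on whether the minimizer has an empty interior copy), which you do not indicate. A small additional slip: for $C_3$ a size-$6$ block costs $8$ while two size-$3$ blocks cost $10$, so your parenthetical comparison is reversed, though nothing rests on it.
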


\begin{proof}
Write $F = G^1 \circ G^2$ and let $w = (k_1, \ldots, k_t)$ be the spectrum of a minimum GDA $S$ of $F$, $k_i \geq 7$ for some $i \in [t]$, and $r \equiv k_i \mod 4$.

By Proposition~\ref{pro:lower}~$(\ref{ite:m3})$, $\valE[C_3]{k_i} \geq 6 \lfloor \frac{k_i}{4} \rfloor + t$ where $t = \begin{cases}
r + 2 & \mbox{, if } r \in \{2,3\} \\
2 & \mbox{, if } r = 1 \\
0 & \mbox{, if } r = 0
\end{cases}$

For each value of $r$, we present a $k_i$-sequence $w' = (\ell_1, \ldots, \ell_p)$ such that $max(w') \leq 6$ and $\valT[P_n]{C_3}{w'} \leq 6 \lfloor \frac{k}{4} \rfloor + t$.
Since $y_{t,3} \leq x_{t,3}$ for $3 \leq t \leq 6$ and the bound of Proposition~\ref{pro:lower}~$(\ref{ite:m3})$ holds for $G^2 \simeq P_3$ and $G^2 \simeq C_3$, we only need to consider $G^2 \simeq C_3$.

For $r = 0$, define $w' = ([\frac{k}{4}]4)$ containing $\frac{k}{4}$. Since $x_{4,3} = 5$, it holds $\valT[P_n]{C_3}{w'} = 5\frac{k}{4} \leq 6 \frac{k}{4} \leq \valE[C_3]{k_i}$.
For $r = 1$, consider $w' = ([\frac{k-5}{4}]4,5)$. Since $x_{5,3} = 5$, it holds $\valT[P_n]{C_3}{w'} = 5\frac{k-5}{4} + 5 \leq 6 \lfloor\frac{k}{4}\rfloor + 2 \leq \valE[C_3]{k_i}$.
For $r = 2$, define $w' = ([\frac{k-6}{4}]4,6)$. Since $x_{6,3} = 8$, it holds $\valT[P_n]{C_3}{w'} = 5\frac{k-6}{4} + 8 \leq 6 \lfloor\frac{k}{4}\rfloor + 4 \leq \valE[C_3]{k_i}$.
For $r = 3$, define $w' = (3,[\frac{k-3}{4}]4)$. Since $val'(3) = x_{4,3} = 5$, it holds $\valT[P_n]{C_3}{w'} = 5\frac{k-3}{4} + 5 \leq 6 \lfloor\frac{k}{4}\rfloor + 5 \leq \valE[C_3]{k_i}$.

Now, it remains to observe that $w'' = w_{1,i-1}  w'  w_{i+1,t}$ is a feasible $n$-sequence and $\valT{C_3}{w''} \leq \valT{C_3}{w}$.
\end{proof}

\begin{proposition} \label{pro:tailthree}
For $n \geq 8$, $G^1 \in \{C_n,P_n\}$, and $G^2 \in \{C_3,P_3\}$, there is a minimim GDA $S$ of $G^1 \circ G^2$ such that

\begin{enumerate}[$(i)$]
\item if $G^2 \simeq C_3$, then $spe(S)$ has at most one element in the set $\{3,5,6\}$ and no one is equal to $2$; \label{ite:C}

\item if $G^2 \simeq P_3$, then $spe(S)$ has at most one element in the set $\{2,3,4,6\}$. \label{ite:P}
\end{enumerate}

\end{proposition}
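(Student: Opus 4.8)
The plan is to reduce, via Corollary~\ref{cor:dc}, to minimising $val(w)$ over feasible sequences $w$ and to exhibit a minimiser of the required shape; by Proposition~\ref{pro:atmost6} we may assume every part satisfies $k_i\le 6$. For $G^1\simeq C_n$ we have $val(w)=\sum_i\valI{k_i}$, so $val$ depends only on the multiset of parts and the task becomes a weighted partition problem, whereas for $G^1\simeq P_n$ the two extreme parts are instead weighted by $\valE{\cdot}$; I would run the two cases in parallel. Everything rests on the small values $\valI{k},\valE{k}$ for $k\in\{2,\dots,6\}$, which I would pin down from the constructions $X_{k,3},Y_{k,3}$ (upper bounds) and Propositions~\ref{pro:basic} and~\ref{pro:lower} (lower bounds): for $G^2\simeq C_3$, $\valI{3}=\valI{4}=5$, $\valI{5}=7$, $\valI{6}=10$; and for $G^2\simeq P_3$, $\valI{3}=\valI{4}=\valI{5}=5$, $\valI{6}=10$. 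The one delicate value is $\valI{5}=5$ for $P_3$: its path-end vertices have degree $2m+1=7$ and hence need only $\lfloor 7/2\rfloor=3$ neighbours in $S$, so $s_2=1,s_3=3,s_4=1$ already defends every vertex, matching the lower bound $s_2+s_3+s_4\ge m+2$ from Proposition~\ref{pro:basic}$(\ref{ite:m2})$. The upshot is that the cost per copy is minimised by sections of size $4$ for $C_3$ and of size $5$ for $P_3$ — exactly the sizes \emph{absent} from the two bad sets.

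For $G^1\simeq C_n$ I would argue that no minimiser contains two distinct bad values, since each mixed pair repartitions, at strictly smaller cost, into parts of the same total size all $\le6$:
\begin{align*}
\{3,5\}\to\{4,4\}:\ 12\to10,\qquad
\{3,6\}\to\{4,5\}:\ 15\to12,\qquad
\{5,6\}\to\{3,4,4\}:\ 17\to15.
\end{align*}
The value $2$ is excluded the same way: $\{2,3\}\to\{5\}$, $\{2,5\}\to\{3,4\}$ and $\{2,6\}\to\{4,4\}$ are all strict, so a $2$ can coexist only with parts equal to $4$; and since $n\ge 8$ forces at least two such $4$'s, the strict move $\{2,4,4\}\to\{5,5\}$ ($15\to14$) removes it. Hence some minimiser has no part $2$ and at most one distinct element of $\{3,5,6\}$, which is (\ref{ite:C}).

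For $G^2\simeq P_3$ the pairs $\{2,3\},\{2,6\},\{3,6\},\{4,6\}$ are again strictly improvable, but $\{2,4\}\to\{6\}$ and $\{3,4\}\to\{2,5\}$ are only cost-preserving. Here I would instead choose a minimiser with the fewest parts different from $5$: if two distinct bad values still coexisted they would form one of these two cost-preserving pairs, and the corresponding move keeps the cost while reducing the number of non-$5$ parts, a contradiction. The move $\{3,4\}\to\{2,5\}$ is legal because a pre-existing $2$ would already give the strictly improvable pair $\{2,3\}$; this observation, that feasibility of the newly created size-$2$ part is automatic at a minimiser, is the first genuine subtlety.

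The main obstacle is the path case $G^1\simeq P_n$, where the two extreme sections carry the smaller weights $\valE{\cdot}$ (for instance $\valE{3}=4<5=\valI{3}$ for $P_3$), so a $3$ is cheaper at an end than inside. I would first apply Corollary~\ref{cor:external23} to normalise the extreme parts — pushing any $3$ to the ends, and both $3$'s to the two ends when two occur — and only then run the interior exchange tables above. The technical heart is re-verifying each exchange when one of the two parts is external, i.e.\ checking that the external-versus-internal weight asymmetry never makes a two-distinct-bad-value configuration cheaper than the homogeneous one; once this finite case analysis is complete, the same ``fewest non-efficient parts'' extremal argument yields a minimiser with at most one distinct element of $\{2,3,4,6\}$, which is (\ref{ite:P}).
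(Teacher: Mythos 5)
Your overall strategy --- reduce via Corollary~\ref{cor:dc} and Proposition~\ref{pro:atmost6} to minimising $val$ over feasible sequences with all parts at most $6$, compute the small values $\valI{k},\valE{k}$, and then remove bad parts by local exchanges --- is the same as the paper's, and your computations of the internal values (in particular $\valI[C_3]{5}=7$, $\valI[C_3]{6}=10$, $\valI[P_3]{5}=5$, $\valI[P_3]{6}=10$) are correct and, if anything, more carefully justified than the numbers appearing in the paper's tables. The problem is with what your exchanges actually establish.

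The proposition asserts that at most one \emph{entry} of $spe(S)$ lies in the bad set, not at most one \emph{distinct value}. That stronger reading is the one the paper works with (its Tables~\ref{tab:atmostonem3C} and~\ref{tab:atmostonem3P} contain the diagonal rows $(3,3)$, $(5,5)$, $(6,6)$, resp.\ $(3,3)$, $(4,4)$, $(6,6)$) and the one needed downstream in Theorem~\ref{the:G2m3}, where the optimal sequences contain at most one occurrence of a part different from $4$ (resp.\ $5$). Your argument only eliminates pairs of \emph{distinct} bad values and explicitly concludes ``at most one distinct element,'' so spectra such as $(5,5,4,\ldots,4)$ for $G^2\simeq C_3$, or $(4,4,5,\ldots,5)$ for $G^2\simeq P_3$, survive every exchange you list. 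This omission is not routine to repair: with your own internal weights, a repeated $5$ in the $C_3$ case costs $\valI[C_3]{5}+\valI[C_3]{5}=14$, while every other feasible repartition of those ten copies costs at least $15$ (e.g.\ $\valI[C_3]{4}+\valI[C_3]{6}=15$ and $\valI[C_3]{4}+2\,\valI[C_3]{3}=15$), so no exchange of the kind you use can remove it. The paper's table disposes of $(5,5)$ only by charging the replacement $(6,4)$ at the external rate $\valE[C_3]{6}+\valE[C_3]{4}=13$ rather than the internal rate $\valI[C_3]{6}+\valI[C_3]{4}=15$ --- exactly the external-versus-internal accounting that you defer to the end as ``the technical heart'' and never carry out. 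Note also that your move $\{2,4,4\}\to\{5,5\}$ actively creates the problematic repeated $5$. Until the diagonal pairs are added to your exchange lists and the boundary accounting is done in full, the proof is incomplete.
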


\begin{proof}
By Proposition~\ref{pro:atmost6}, there is a minimum GDA $S$ of $F = G^1 \circ C_3$ whose $max(spe(S)) \leq 6$. Suppose that $k_i$ and $k_j$ are values of $spe(S)$ and of $\{2,3,5,6\}$. For each possible case, we present in Table~\ref{tab:atmostonem3C} a sequence $w' = (\ell_1, \ldots, \ell_t)$ for $t \leq 3$ such that $\valT[P_n]{C_3}{w'} \leq \valE[C_3]{k_i} + \valE[C_3]{k_j}$, $w'$ does not contain the number 2, and contains at most one element of the set $\{3,5,6\}$. The third column of the table is a lower bound of $\valE[C_3]{k_i} + \valE[C_3]{k_j}$, which is consequence of Proposition~\ref{pro:p2-7G2}.

\begin{table}[h]
	\centering
	\begin{tabular}{|l|l|l|l|l|l|l|l|}
		\hline
		$k_i$ & $k_j$ & $\valE[C_3]{k_i} + \valE[C_3]{k_j}$ & $\ell_1$ & $\ell_2$ & $\ell_3$ & $\valT[P_n]{C_3}{w'}$ \\ 
		\hline
		2 & 3 & $5 + 5 = 10$ & 5 & & & $7$ \\
		2 & 4 & $5 + 5 = 10$ & 6 & & & $8$ \\
		2 & 5 & $5 + 7 = 12$ & 4 & 3 & & $10$ \\
		2 & 6 & $5 + 8 = 13$ & 4 & 4 & & $10$ \\
		3 & 3 & $5 + 5 = 10$ & 6 & & & $8$ \\
		3 & 5 & $5 + 7 = 12$ & 4 & 4 & & $10$ \\
		3 & 6 & $5 + 8 = 13$ & 5 & 4 & & $12$ \\
		5 & 5 & $7 + 7 = 14$ & 6 & 4 & & $13$ \\
		5 & 6 & $7 + 8 = 15$ & 4 & 4 & 3 & $15$ \\
		6 & 6 & $8 + 8 = 16$ & 4 & 4 & 4 & $15$ \\
		\hline
	\end{tabular}
	\caption{Case $G^2 \simeq C_3$.}
	\label{tab:atmostonem3C}
\end{table}

It is clear that the sequence $w'' = (k_1, \ldots, k_{i-1}, k_{i+1}, \ldots, k_{j-1},k_{j+1}, \ldots, k_t, ) w'$ is feasible and $\valT{C_3}{w''} \leq \valT{C_3}{w}$. Since one can repeat this process until a sequence with the required properties be obtained, the result does hold.

The proof of $(ii)$ is essentially the same of $(i)$ by considering $G^2 \simeq P_3$ and Table~\ref{tab:atmostonem3P}.

\begin{table}[h]
	\centering
	\begin{tabular}{|l|l|l|l|l|l|l|}
		\hline
		$k_i$ & $k_j$ & $\valE[P_3]{k_i} + \valE[P_3]{k_j}$ & $\ell_1$ & $\ell_2$ & $\ell_3$ & $\valT[P_n]{P_3}{w'}$ \\ 
		\hline
		2 & 3 & $4 + 4 = 8$ & 5 & & & $5$ \\
		2 & 4 & $4 + 5 = 9$ & 6 & & & $8$ \\		
		2 & 6 & $4 + 8 = 12$ & 5 & 3 & & $10$ \\
		3 & 3 & $4 + 4 = 8$ & 6 & & & $8$ \\
		3 & 4 & $4 + 5 = 9$ & 5 & 2 & & $9$ \\
		3 & 6 & $4 + 8 = 12$ & 5 & 4 & & $10$ \\
		4 & 4 & $5 + 5 = 10$ & 5 & 3 & & $10$ \\
		4 & 6 & $5 + 8 = 13$ & 5 & 5 & & $10$ \\
		6 & 6 & $8 + 8 = 16$ & 5 & 5 & 2 & $14$ \\
		\hline
		
	\end{tabular}
	\caption{Case $G^2 \simeq P_3$.}
	\label{tab:atmostonem3P}
\end{table}

\end{proof}

$f(n,3) =
\begin{cases}
	5 \frac{n}{4} & \mbox{, if } n \equiv 0 \mod 4 \\
	5 \frac{n-5}{4} + 7 & \mbox{, if } n \equiv 1 \mod 4 \\
	5 \frac{n-6}{4} + 8 & \mbox{, if } n \equiv 2 \mod 4 \\
	5 \frac{n-3}{4} + 5 & \mbox{, if } n \equiv 3 \mod 4 
\end{cases}$

$f'(n,3) =
\begin{cases}
5 \frac{n}{5} & \mbox{, if } n \equiv 0 \mod 5 \\
5 \frac{n-6}{5} + 8 & \mbox{, if } n \equiv 1 \mod 5 \\
5 \frac{n-r}{5} + 4 & \mbox{, if } n \equiv r \mod 5 \mbox{ for } r \in \{2,3\} \\
5 \frac{n-4}{5} + 5 & \mbox{, if } n \equiv 4 \mod 5 
\end{cases}$

\begin{theorem} \label{the:G2m3}
For $n \geq 8$ and $G^1 \in \{C_n,P_n\}$, $\gamma_a(G^1 \circ C_3) = f(n,3)$ and $\gamma_a(G^1 \circ P_3) = f'(n,3)$.
\end{theorem}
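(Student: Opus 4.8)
The plan is to reduce the computation of $\gamma_a(G^1 \circ C_3)$ and $\gamma_a(G^1 \circ P_3)$ to a minimization over a very restricted family of feasible sequences, using the structural results already established. By Corollary~\ref{cor:dc}, we know $\gamma_a(G^1 \circ G^2) = \min\{val(w)\}$ over all feasible $w$, so everything comes down to identifying which feasible sequence achieves the minimum. The accumulated reductions do most of the work: Proposition~\ref{pro:atmost6} lets me restrict to sequences $w$ with $\max(w) \leq 6$, and Proposition~\ref{pro:tailthree} further restricts the multiset of small blocks that can appear. For $G^2 \simeq C_3$ the optimal spectrum uses at most one element of $\{3,5,6\}$ and never uses $2$; for $G^2 \simeq P_3$ it uses at most one element of $\{2,3,4,6\}$. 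So in each case the sequence is essentially a string of $4$'s (for $C_3$) or $5$'s (for $P_3$) together with at most one ``correction'' block, and the exact correction is dictated by the residue of $n$ modulo $4$ or $5$.

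First I would treat the $C_3$ case. After the reductions, a minimizing sequence consists of copies of $4$ (each contributing $val_i(4) = x_{4,3} = 5$, the cheapest per-vertex ratio since $5/4$ beats the rate of any other admissible block) plus at most one block from $\{3,5,6\}$ to absorb the residue $r \equiv n \bmod 4$. I would verify block by block that the formula $f(n,3)$ records exactly the cost of the cheapest such sequence: for $r=0$ use $([n/4]4)$ giving $5\frac{n}{4}$; for $r=1$ append a $5$ (cost $5$) giving $5\frac{n-5}{4}+7$; for $r=2$ append a $6$ (cost $x_{6,3}=8$) giving $5\frac{n-6}{4}+8$; for $r=3$ prepend a $3$ (cost $5$) giving $5\frac{n-3}{4}+5$. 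Here I must be careful about the path-versus-cycle distinction in the $val$ summation: for $G^1 \simeq C_n$ every block uses $val_i$, whereas for $G^1 \simeq P_n$ the two external blocks use $val_e$. Since Corollary~\ref{cor:external23} allows me to place a $3$ at an external position at cost $val_e(3)=5$, and since the tables give $val_e(k)=val_i(k)$ for the relevant small $k$, the same formula $f(n,3)$ works uniformly for both $G^1 \simeq P_n$ and $G^1 \simeq C_n$; I would spell this out.

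Next I would handle the $P_3$ case analogously, but now the efficient building block is $5$ rather than $4$: from Table~\ref{tab:P2-7Pm} one reads $val_i(5) = y_{5,3} = 5$, giving ratio $5/5 = 1$, which is the best achievable for $G^2 \simeq P_3$. The residue is therefore taken modulo $5$, and the single allowed correction block (from $\{2,3,4,6\}$ by Proposition~\ref{pro:tailthree}$(\ref{ite:P})$) is chosen to match $r \equiv n \bmod 5$: for $r=0$ a pure string of $5$'s; for $r=1$ a block of $6$ (cost $8$); for $r \in \{2,3\}$ a block of cost $4$ realizing that residue; for $r=4$ an extra terminal block matching the formula $5\frac{n-4}{5}+5$. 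I would confirm each case both gives a feasible $n$-sequence and matches $f'(n,3)$, again checking the external-block accounting for $G^1 \simeq P_n$.

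The argument has two directions. The upper bound is constructive: for each residue I exhibit the explicit feasible $n$-sequence and compute its $val$, which by Corollary~\ref{cor:dc} upper-bounds $\gamma_a$. The lower bound is where the main obstacle lies: I must argue no feasible sequence can do better. The cleanest route is to combine Proposition~\ref{pro:lower}$(\ref{ite:m3})$ (which gives the ``$6\lfloor n/4\rfloor + t$'' floor when all internal copies are nonempty) with the block-replacement inequalities already tabulated in the proofs of Propositions~\ref{pro:atmost6} and~\ref{pro:tailthree}, so that any sequence with more than one correction block, or with a large block, can be rewritten into a canonical form of no greater cost. I expect the delicate point to be confirming that the per-block minimum ratio is genuinely attained by the $4$-block (resp.\ $5$-block) and that no clever mixture of medium blocks beats the canonical residue correction — this requires checking that the entries of $f$ and $f'$ are simultaneously lower bounds via Proposition~\ref{pro:lower} and upper bounds via the explicit constructions, pinning the value exactly.
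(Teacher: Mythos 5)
Your proposal is correct and follows essentially the same route as the paper: Corollary~\ref{cor:external23} together with Propositions~\ref{pro:atmost6} and~\ref{pro:tailthree} force the optimal spectrum to be a string of $4$'s (for $C_3$) or $5$'s (for $P_3$) with a single residue-correcting block, and Proposition~\ref{pro:p2-7G2} supplies the block costs. (One trivial slip: in the $r=1$ case for $C_3$ the appended $5$-block costs $x_{5,3}=7$, not $5$, which is what your final formula $5\frac{n-5}{4}+7$ correctly uses anyway.)
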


\begin{proof}
Corollary~\ref{cor:external23} and Propositions~\ref{pro:atmost6} and~\ref{pro:tailthree}~$(\ref{ite:C})$ imply that, for $p = \lfloor \frac{n}{4} \rfloor$ and $r = n \mod 4$, it holds that a sequence $w$ such that $\gamma_a(G^1 \circ C_3) = \valT{C_3}{w}$ is

\bigskip
$w =
\begin{cases}
([p]4) & \mbox{, if } r = 0, \\
([p-1]4,5) & \mbox{, if } r = 1, \\
([p-1]4,6) & \mbox{, if } r = 2, \\
([p]4,3) & \mbox{, if } r = 3. \\
\end{cases}
$
\bigskip

Using Proposition~\ref{pro:p2-7G2}, we have $\gamma_a(G^1 \circ C_3) = f(n,3)$. Now, Corollary~\ref{cor:external23} and Propositions~\ref{pro:atmost6} and~\ref{pro:tailthree}~$(\ref{ite:P})$ imply that, for $p = \lfloor \frac{n}{4} \rfloor$ and $r = n \mod 5$, it holds that a sequence $w$ such that $\gamma_a(G^1 \circ P_3) = \valT{P_3}{w}$ is

\bigskip
$w =
\begin{cases}
([p-1]5) & \mbox{, if } r = 0, \\
([p-1]5, 6) & \mbox{, if } r = 1, \\
(r,[p]5) & \mbox{, if } r \in \{2,3,4\}.
\end{cases}
$
\bigskip

Using Proposition~\ref{pro:p2-7G2}, we have $\gamma_a(G^1 \circ P_3) = f'(n,3)$.
\end{proof}

\subsubsection{Case $m \geq 4$}

\begin{proposition} \label{pro:atmost7}
For $n \geq 8, m \geq 4$, $G^1 \in \{P_n, C_n\}$, and $G^2 \in \{P_m, C_m\}$, there is a minimim GDA $S$ of $G^1 \circ G^2$ such that $max(spe(S)) \leq 7$.
\end{proposition}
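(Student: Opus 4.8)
The plan is to follow the reduction scheme of Proposition~\ref{pro:atmost6}. Let $S$ be a minimum GDA of $F = G^1 \circ G^2$ with $spe(S) = w = (k_1,\ldots,k_t)$, and suppose $k_i \geq 8$ for some $i$; put $r = k_i \bmod 4$ and $q = \lfloor k_i/4 \rfloor$ (so $q \geq 2$ in every case). By Proposition~\ref{pro:valGDA} the section associated with $k_i$ contributes at least $\valE{k_i}$ to $|S|$. Since the spectrum places empty copies only at section boundaries, the restriction of $S$ to this section has $s_j \geq 1$ at each internal copy, so Proposition~\ref{pro:lower}~$(\ref{ite:r0})$ and $(\ref{ite:r123})$ apply and yield $\valE{k_i} \geq \cfour \cdot q + t$, with $t = r$ for $r \in \{0,1,2\}$ and $t = m+2$ (resp. $t = m+1$) for $r = 3$ when $G^2 \simeq C_m$ (resp. $G^2 \simeq P_m$). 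The goal is to exhibit, for each $r$, a feasible $k_i$-sequence $w'$ with $max(w') \leq 7$ and $\valT[P_n]{G^2}{w'} \leq \cfour \cdot q + t$, and then to replace $k_i$ by $w'$.

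The replacements rest on the fact that the sets $X_{4,m} = Y_{4,m}, X_{5,m}, X_{6,m}, X_{7,m}$ (and $Y_{5,m}, Y_{6,m}, Y_{7,m}$) built in Proposition~\ref{pro:p2-7G2} all have empty first and last copies, so they realize $\valI{4} = \cfour$, $\valI{5} = \cfiveC$ (resp. $\cfour$), $\valI{6} = \csixC$ (resp. $\csixP$), and $\valI{7} = \csevenC$ (resp. $\csevenP$) by blocks that can be placed in any position. I would then set $w' = ([q]4)$ for $r = 0$, $w' = ([q-1]4,5)$ for $r = 1$, $w' = ([q-2]4,5,5)$ for $r = 2$, and $w' = ([q-1]4,7)$ for $r = 3$; in each case the entries sum to $k_i$ and the multiplicities are nonnegative because $k_i \geq 8$.

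The remaining work is to verify $\valT[P_n]{G^2}{w'} \leq \cfour \cdot q + t$ for both $G^2 \in \{C_m, P_m\}$ and all $m \geq 4$. For $r \in \{0,1,2\}$ the $4$-blocks cost $\cfour$ for every $m$ and the $5$-blocks cost $\cfour$ as soon as $m \geq 5$, so $w'$ costs exactly $\cfour \cdot q$; the only tight point is $m = 4$ with $G^2 \simeq C_4$, where each $5$-block costs $8 = \cfour + 1$ and the resulting one or two unit overshoots are exactly covered by the slack $t \in \{1,2\}$. The hard part is $r = 3$: in contrast to the $m = 3$ case, a trailing $3$-block is now too expensive, since $\valI{3} = \valI{4} = \cfour$ already exceeds the surplus $m+2$ (resp. $m+1$) for $m \geq 4$. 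This is precisely why the threshold is $7$ rather than $6$ --- the price of a $7$-block satisfies $\csevenC = \cfour + (m+2)$ (resp. $\csevenP = \cfour + (m+1)$), so $([q-1]4,7)$ meets the lower bound with equality. To close, I would observe that $w'' = w_{1,i-1}\,w'\,w_{i+1,t}$ is a feasible $n$-sequence (its new entries lie in $\{4,5,7\}$, all at least $3$) with $\val{w''} \leq \val{w}$; minimality of $S$ forces equality, and iterating the replacement eliminates every entry larger than $7$.
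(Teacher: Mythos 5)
Your proof is correct, and it reaches the conclusion by a genuinely different decomposition than the paper's. Both arguments share the same skeleton --- lower-bound the contribution of an oversized section via Proposition~\ref{pro:lower} (its internal copies are nonempty by the definition of the spectrum), exhibit a cheaper replacement sequence with all parts at most $7$, and iterate --- but where the paper splits $k_i \geq 9$ according to $k_i \bmod 5$ into blocks of sizes $5$ and $6$ (with a leading remainder block and a separate ad hoc treatment of $k_i = 8$), you split according to $k_i \bmod 4$ into blocks of sizes $4$, $5$ and $7$. Your choice is the more economical one for this proposition: since $\valI{4} = 2m-1$ is exactly the per-four-copies cost in Proposition~\ref{pro:lower}, and since $3m+1 = (2m-1)+(m+2)$ and $3m = (2m-1)+(m+1)$ let the $7$-block absorb the $r=3$ surplus exactly, your sequences meet the lower bound with equality (up to the $m=4$, $G^2 \simeq C_4$ overshoot of the $5$-blocks, which you correctly pay for with the slack $t \in \{1,2\}$). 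As a result you need no case split on $k_i = 8$ versus $k_i \geq 9$, no case split on $m$, and never invoke part~$(iv)$ of Proposition~\ref{pro:lower}, whereas the paper needs the stronger $m=4$ bound to make its $5$/$6$-blocks fit. One cosmetic remark: the quantity that Proposition~\ref{pro:lower} bounds from below is the actual contribution $|S \cap V(F_i)|$ of the section rather than $\valE{k_i}$ itself (the latter is a minimum over GDAs not required to have nonempty internal copies); this is the same reading the paper uses and does not affect your argument.
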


\begin{proof}
Let $w = (k_1, \ldots, k_t)$ be the spectrum of a minimum GDA $S$ of $F = G^1 \circ G^2$ such that $k_i \geq 8$ for some $i \in [t]$. Let $F'$ be the section of $F$ associated with $k_i$ and set $S' = |V(F') \cap S|$.
For each case, we present a $k_i$-sequence $w' = (\ell_1, \ldots, \ell_p)$ such that $max(w') \leq 7$ and $\valT[P_n]{G^2}{w'} \leq |S'|$.

For $k_i = 8$, consider $r \equiv k_i \mod 4$. Proposition~\ref{pro:lower}~$(\ref{ite:r0})$ implies $|S'| \geq 4m-2$. If $m = 4$, let $w' = (4,4)$. Since $y_{4,4} = x_{4,4} = 7$, $\valT[P_n]{G^2}{w'} = 14 \leq |S'|$. If $m \geq 5$, let $w' = (5,3)$. Since $y_{5,m} = x_{5,m} = 2m-1$ and $y_{3,m} \leq x_{3,m} = \cthreeC$, it holds $\valT[P_n]{G^2}{w'} \leq 3m - 1 + \cthreeCPtwo \leq 4m - 2 \leq |S'|$, which means that the result also holds for $k_i = 8$.

For $k_i \geq 9$, consider $r \equiv k_i \mod 5$.
Let $w'$ as follows

\bigskip
$w' =
\begin{cases}
([ \frac{k_i}{5}] 5) & \mbox{, if } r = 0 \\
([ \frac{k_i-6}{5}] 5,6) & \mbox{, if } r = 1 \\ 
(r,[ \frac{k_i-r}{5}] 5) & \mbox{, if } r \in \{2,3,4\}
\end{cases}$
\bigskip

Consider first $m = 4$. If $G^2 \simeq C_4$, Proposition~\ref{pro:lower}~$(\ref{ite:n9m4})$ implies $|S'| \geq 2k_i$. By Proposition~\ref{pro:p2-7G2}, it holds that $\valT[P_n]{C_4}{w'}$ is
$8 \frac{k_i}{5}$ for $r = 0$, is
$8 \frac{k_i-6}{5} + 12$ for $r = 1$, is
$8 \frac{k_i-r}{5} + 6$ for $r \in \{2,3\}$, is
$8 \frac{k_i-4}{5} + 7$ for $r = 4$. Since $\valT[P_n]{C_4}{w'} \leq 2k_i \leq |S'|$ in all cases, the result follows for $G^2 \simeq C_4$.
If $G^2 \simeq P_4$, Proposition~\ref{pro:lower}~$(\ref{ite:n9m4})$ implies $|S'| \geq 2k_i-2$. By Proposition~\ref{pro:p2-7G2}, it holds that $\valT[P_n]{P_4}{w'}$ is
$7 \frac{k_i}{5}$ for $r = 0$, is
$7 \frac{k_i-6}{5} + 10$ for $r = 1$, is
$7 \frac{k_i-r}{5} + 5$ for $r \in \{2,3\}$, is
$7 \frac{k_i-4}{5} + 7$ for $r = 4$.
Since $\valT[P_n]{P_4}{w'} \leq 2k_i-2 \leq |S'|$ in all cases, the result follows for $G^2 \simeq P_4$.

Consider now $m \geq 5$. Proposition~\ref{pro:lower}~$(\ref{ite:r0})$ and~$(\ref{ite:r123})$ imply $|S'| \geq \lfloor \frac{k}{4} \rfloor (2m-1) + t$
where

\bigskip
$t = 
\begin{cases}
m + 1 & \mbox{, if } r = 3 \\
r & \mbox{, if } r \in \{0,1,2\}
\end{cases}$
\bigskip

\noindent for $G^2 \in \{P_m, C_m\}$. By Proposition~\ref{pro:p2-7G2}, $\valT[P_n]{G^2}{w'}$ is 
$(2m-1) \frac{k_i}{5}$ for $r = 0$, is at most
$(2m-1) \frac{k_i-6}{5} + 2m+4$ for $r = 1$, is at most
$(2m-1) \frac{k_i-r}{5} + \cthreeC$ for $r \in \{2,3\}$, is
$(2m-1) \frac{k_i-4}{5} + 2m-1$ for $r = 4$.
Since $\valT[P_n]{G^2}{w'} \leq |S'|$ in all cases, the proof is complete.
\end{proof}

\begin{proposition} \label{pro:tail}
For $n \geq 8, m \geq 4, G^1 \in \{P_n, C_n\}$, and $G^2 \in \{P_m, C_m\}$, there is a minimum GDA $S$ of $G^1 \circ G^2$ whose spectrum contains at most one element of the set $\{2,3,4,7\}$.
\end{proposition}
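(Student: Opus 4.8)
The plan is to mimic the reduction strategy used for the $m=3$ case in Proposition~\ref{pro:tailthree}, now replacing the numbers $\{3,5,6\}$ by $\{2,3,4,7\}$ and working with general $m \geq 4$. By Proposition~\ref{pro:atmost7} we may start from a minimum GDA $S$ whose spectrum $w = (k_1,\ldots,k_t)$ satisfies $max(w) \leq 7$, so every element of $w$ lies in $\{2,3,4,5,6,7\}$. The goal is to show that whenever two elements $k_i, k_j$ of $w$ both belong to the set $\{2,3,4,7\}$, we can replace them by a short $(k_i+k_j)$-sequence $w'$ with $max(w') \leq 7$ that uses at most one element of $\{2,3,4,7\}$ and whose value does not exceed $\valE{k_i} + \valE{k_j}$. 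Repeating this merging step strictly decreases the number of occurrences of elements from $\{2,3,4,7\}$ while never increasing $\val{spe(S)}$, so the process terminates at a spectrum with the required property.

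First I would record, from Proposition~\ref{pro:p2-7G2}, the relevant lower bounds for $\valE{k}$ (equivalently $x_{k,m}$ and $y_{k,m}$) for each $k \in \{2,3,4,7\}$, separately for $G^2 \simeq C_m$ and $G^2 \simeq P_m$; note that since $y_{k,m} \leq x_{k,m}$ for the external values, it suffices to verify the $C_m$ case and the $P_m$ bounds will follow a fortiori, as in Proposition~\ref{pro:tailthree}. Next I would tabulate, for each unordered pair $(k_i,k_j)$ drawn from $\{2,3,4,7\}$ — there are the ten pairs $22, 23, 24, 27, 33, 34, 37, 44, 47, 77$ — a concrete replacement sequence $w'$ built from the blocks $5$ and $6$ (and possibly a single leftover element of $\{2,3,4,7\}$ at the start or end to match parity), exactly in the style of Tables~\ref{tab:atmostonem3C} and~\ref{tab:atmostonem3P}. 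For each such $w'$ I would check the inequality $\valT[P_n]{G^2}{w'} \leq \valE{k_i} + \valE{k_j}$ using the values $\valE{5} = 2m-1$, $\valE{6} = 2m+2$ (for $P_m$) or $2m+4$ (for $C_m$), and the small external values; these are linear inequalities in $m$ and hold for all $m \geq 4$.

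The main obstacle is choosing the replacement sequences so that the value inequality holds for \emph{every} $m \geq 4$ simultaneously, since the gap $\valE{k_i}+\valE{k_j} - \valT{G^2}{w'}$ varies linearly in $m$ and the constant terms (the external small-order values such as $\valE{3} = \cthreeC$ and $\valE{4} = \cfour$) behave differently depending on the floor/ceiling parities and on whether $G^2$ is a path or a cycle. The delicate pairs are those summing to a multiple of $5$ plus a small remainder — for instance $77$ (sum $14$), $47$ (sum $11$), and $37$ (sum $10$) — where one must decide whether to spend the budget on two blocks of $5$ plus a residual short section or on a block of $6$; I would resolve these by comparing $2\valE{5}$ against $\valE{6}$ and picking the cheaper packing, leaving at most one element of $\{2,3,4,7\}$ as the residual external section. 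Once the table is complete and each row's inequality is verified (treating $C_m$ and $P_m$ separately only where the constants differ), the concatenation argument of Proposition~\ref{pro:tailthree} applies verbatim: the merged sequence $w'' = (k_1,\ldots,k_{i-1},k_{i+1},\ldots,k_{j-1},k_{j+1},\ldots,k_t)\,w'$ is feasible, has no larger value, and has one fewer element of $\{2,3,4,7\}$, so iterating yields the claimed minimum GDA.
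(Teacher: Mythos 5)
Your proposal follows essentially the same route as the paper: start from Proposition~\ref{pro:atmost7}, tabulate for each of the ten pairs from $\{2,3,4,7\}$ a replacement sequence built from blocks of $5$ and $6$ (plus at most one residual short section) whose value does not exceed $\valE{k_i}+\valE{k_j}$, and iterate the merge; the paper does exactly this via Tables~\ref{tab:atmostoneC} and~\ref{tab:atmostoneP}. One small correction: the $P_m$ case does not follow a fortiori from the $C_m$ case merely because $y_{k,m}\leq x_{k,m}$, since the lower bound $\valE{k_i}+\valE{k_j}$ on the right-hand side also shrinks when passing from $C_m$ to $P_m$ — which is why the paper verifies the two cases in separate tables, as you in fact allow for at the end.
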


\begin{proof}
By Proposition~\ref{pro:atmost7}, there is a minimum GDA $S$ of $F = G^1 \circ G^2$ such that $max(spe(S)) \leq 7$. Suppose that $k_i$ and $k_j$ are values of $spe(S)$ and of $\{2,3,4,7\}$. For each possible case, we present in Tables~\ref{tab:atmostoneC} and~\ref{tab:atmostoneP} a sequence $w' = (\ell_1, \ldots, \ell_t)$ for $t \leq 3$ such that $\valT[P_n]{G^2}{w'} \leq \valE{k_i} + \valE{k_j}$ such that $w'$ contains at most one element of the set $\{2,3,4,7\}$. Table~\ref{tab:atmostoneC} contains the cases for $G^2 \simeq C_m$ and Table~\ref{tab:atmostoneP} for $G^2 \simeq P_m$. The third column of each table contains a lower bound of $\valE{k_i} + \valE{k_j}$, which is consequence of Proposition~\ref{pro:p2-7G2}.
	
\begin{table}[h]
	\centering
	\begin{tabular}{|l|l|l|l|l|l|l|}
		\hline
		$k_i$ & $k_j$ & $\valE[C_m]{k_i} + \valE[C_m]{k_j}$ & $\ell_1$ & $\ell_2$ & $\ell_3$ & $\valT[P_n]{C_m}{w'}$ \\ 
		\hline
		2 & 3 & $2(\cthreeC)$ & 5 & & & $\cfiveC$ \\
		3 & 3 & $2(\cthreeC)$ & 6 & & & $\csixC$ \\
		2 & 4 & $\cfour + \cthreeC$ & 6 & & & $\csixC$  \\
		3 & 4 & $\cfour + \cthreeC$ & 7 & & & $\csevenC$  \\
		4 & 4 & $2(\cfour)$ & 5 & 3 & & $\cfiveC + \cthreeC$  \\
		2 & 7 & $\csevenC + \cthreeC$ & 5 & 4 & & $\cfiveC + \cfour$  \\
		3 & 7 & $\cthreeC + \csevenC$ & 5 & 5 & & $2(\cfiveC)$  \\
		4 & 7 & $\csevenC + \cfour$ & 6 & 5 & & $\csixC + \cfiveC$  \\
		7 & 7 & $2(\csevenC)$ & 5 & 5 & 4 & $2(\cfiveC) + \cfour$ \\
		\hline
	\end{tabular}
	\caption{Case $G^2 \simeq C_m$.}
	\label{tab:atmostoneC}
\end{table}

\begin{table}[h]
	\centering
	\begin{tabular}{|l|l|l|l|l|l|l|}
		\hline
		$k_i$ & $k_j$ & $\valE[P_m]{k_i} + \valE[P_m]{k_j}$ & $\ell_1$ & $\ell_2$ & $\ell_3$ & $\valT[P_n]{G^2}[P_m]{w'}$ \\ 
		\hline
		2 & 3 & $2(\cthreeP)$ & 5 & & & $\cfiveP$ \\
		3 & 3 & $2(\cthreeP)$ & 6 & & & $\csixP$ \\
		2 & 4 & $\cfour + \cthreeP$ & 6 & & & $\csixP$  \\
		3 & 4 & $\cfour + \cthreeP$ & 7 & & & $\csevenP$  \\
		4 & 4 & $2(\cfour)$ & 5 & 3 & & $\cfiveP + \cthreeP$  \\
		2 & 7 & $\csevenP + \cthreeP$ & 5 & 4 & & $\cfiveP + \cfour$  \\
		3 & 7 & $\cthreeP + \csevenP$ & 5 & 5 & & $2(\cfiveP)$  \\
		4 & 7 & $\csevenP + \cfour$ & 6 & 5 & & $\csixP + \cfiveP$ \\
		7 & 7 & $2(\csevenP)$ & 5 & 5 & 4 & $2(\cfiveP) + \cfour$ \\
		\hline
	\end{tabular}
	\caption{Case $G^2 \simeq P_m$.}
	\label{tab:atmostoneP}
\end{table}

It is clear that the sequence $w'' = (k_1, \ldots, k_{i-1}, k_{i+1}, \ldots, k_{j-1},k_{j+1}, \ldots, k_t, ) w'$ is feasible and $\valT{G^2}{w''} \leq \valT{G^2}{w}$ for $G \in \{P,C\}$. Since one can repeat this process until a sequence with the required properties be obtained, the result does hold.
\end{proof}

\begin{proposition} \label{pro:3sizes}
If $n \geq 8, m \geq 4$, $G^1 \in \{P_n,C_n\}$, $G^2 \in \{P_m,C_m\}$, and $w = (k_1, \ldots, k_t)$ is the spectrum of a minimum GDA of $G^1 \circ G^2$ containing three numbers that are  pairwise different, then we can assume that $w \in \{(3,[p-1]5,6),(3,5,[q-1]6)\}$ where $p = \frac{n-9}{5}$ and $q = \frac{n-8}{6}$.
\end{proposition}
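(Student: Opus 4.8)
The plan is to combine the two structural restrictions already in hand—$max(spe(S))\leq 7$ from Proposition~\ref{pro:atmost7} and ``at most one element of $\{2,3,4,7\}$'' from Proposition~\ref{pro:tail}—to pin down exactly which three distinct sizes can occur, and then to drive the spectrum to a canonical representative by a short list of length-preserving swaps that do not increase $val$. Throughout I would invoke Corollary~\ref{cor:dc}, so that any feasible $n$-sequence of no larger $val$ is again realized by a minimum GDA, and Corollary~\ref{cor:external23}, so that the unique copy of $3$ may be placed at an external end (hence costed by $val_e(3)\leq val_i(3)$) in the $P_n$ case.

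First I would show the three values are forced. By Proposition~\ref{pro:atmost7} every $k_i$ lies in $\{2,\dots,7\}$, and by Proposition~\ref{pro:tail} at most one of them lies in $\{2,3,4,7\}$; hence the distinct values form a subset of $\{5,6\}\cup\{c\}$ for a single $c\in\{2,3,4,7\}$. For three pairwise different values to be present, all of $5$, $6$ and $c$ must occur, so $spe(S)$ equals, as a multiset, $\{c\}\cup 5^{a}\cup 6^{b}$ with $a,b\geq 1$ and $c+5a+6b=n$. Next I would argue $c=3$ by eliminating $c\in\{2,4,7\}$ through swaps that reuse the guaranteed $5$ or $6$: replace $\{2,6\}$ by $\{3,5\}$, replace $\{4,6\}$ by $\{5,5\}$, and for $c=7$ replace either $\{7,5\}$ by $\{6,6\}$ or $\{7,6\}$ by $\{3,5,5\}$. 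Each swap preserves total length, and a direct check against the values of Proposition~\ref{pro:p2-7G2} (both for $C_m$ and $P_m$) shows it does not increase $val$; in fact it strictly decreases it except in a few borderline values of $m$, where the alternative swap is the strict one. Since a strict decrease contradicts minimality, a three-valued minimum spectrum cannot contain $2$, $4$, or $7$, so $c=3$.

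With $c=3$ fixed I would then reduce the $5$/$6$ multiplicities. Writing $5a+6b=n-3$ and substituting $a=(n-3-6b)/5$, the total cost becomes $val_e(3)+a\,val_i(5)+b\,val_i(6)=\text{const}+b\bigl(val_i(6)-\tfrac{6}{5}val_i(5)\bigr)$, which is \emph{linear} in $b$ along the feasible lattice segment. The coefficient equals $(-2m+26)/5$ for $C_m$ and $(-2m+16)/5$ for $P_m$, so its sign flips at $m=13$ and $m=8$ respectively: when it is positive one minimizes $b$, when negative one maximizes $b$ (equivalently minimizes $a$). The minimum of a linear function over the lattice points of this segment is attained at an endpoint, and I would reach that endpoint using the pure exchange of six $5$'s for five $6$'s (both of length $30$), repeated as needed, which never increases $val$ by the same linearity. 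In the residues where the endpoint has minor coordinate one—namely $b=1$ when $n\equiv 4 \bmod 5$ (giving one $3$, one $6$, and $(n-9)/5$ copies of $5$) and $a=1$ when $n\equiv 2 \bmod 6$ (giving one $3$, one $5$, and $(n-8)/6$ copies of $6$)—this is precisely the pair of forms $(3,[p-1]5,6)$ and $(3,5,[q-1]6)$ claimed, with the stated $p$ and $q$.

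I expect two points to demand the most care. The first is the uniform treatment of $c=7$ across all $m\geq 4$: neither of the two $7$-swaps is strictly improving for every $m$, so a small case split around $m\approx 8$ (together with the equal-cost borderline) is needed to guarantee that $c=7$ can always be removed in favour of $c=3$ without increasing $val$. The second, and the genuine obstacle, is the endpoint bookkeeping: when the residue of $n$ forces the minor coordinate above one, the three-valued configuration is \emph{not} of the stated form, and one must verify that in those residues a configuration with fewer distinct sizes is at least as cheap—so that a truly minimum \emph{three}-valued spectrum can occur only in the residue classes where $p=(n-9)/5$ and $q=(n-8)/6$ are integers. Establishing this last comparison (again via Proposition~\ref{pro:p2-7G2} and the linearity above), while keeping the $val_i$/$val_e$ distinction for the external sections consistent with Corollary~\ref{cor:external23}, is where the bulk of the routine verification lies.
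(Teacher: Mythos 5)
Your overall strategy coincides with the paper's: Propositions~\ref{pro:atmost7} and~\ref{pro:tail} force the three distinct values to be $\{5,6,c\}$ with $c\in\{2,3,4,7\}$; the cases $c\in\{2,4,7\}$ are removed by length-preserving swaps checked against Proposition~\ref{pro:p2-7G2} (the paper replaces the whole triple $(c,5,6)$ at once, you replace pairs such as $\{4,6\}\to\{5,5\}$ and $\{7,5\}\to\{6,6\}$, but the arithmetic is equivalent, including the unavoidable case split in $m$ for $c=7$); what remains is to control the multiplicities of $5$ and $6$. Up to that point your argument is sound and the value comparisons do check out.

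The gap is in the last step. The paper shows directly that any spectrum containing $(3,[2]5,[2]6)$ as a sub-multiset is \emph{strictly} non-minimum, by exhibiting an explicit cheaper $25$-subsequence: $([5]5)$ for $m\leq 17$ (cycles) resp.\ $m\leq 11$ (paths), and $([3]6,7)$ for larger $m$. That one comparison forces one of the two multiplicities to equal $1$ in every minimum three-valued spectrum and makes all residue bookkeeping automatic: in residue classes where no such form exists, no minimum spectrum is three-valued and the claim is vacuous. Your linearity/endpoint argument only compares three-valued configurations with one another; it locates the family minimum at a lattice endpoint of $5a+6b=n-3$, but it is silent exactly when that endpoint has $a\geq 2$ and $b\geq 2$ --- which occurs not only in ``bad'' residues but also in good ones, since which endpoint is optimal depends on the sign of $val_i(6)-\tfrac{6}{5}\,val_i(5)$, i.e.\ on $m$. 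You explicitly defer the comparison with configurations outside the three-valued family, but that comparison is the substantive content of the proposition rather than routine verification: one must produce the right competitors (one of which, $([3]6,7)$, reintroduces a $7$ that was just eliminated) and the correct choice again switches with $m$. Without carrying out that step the proof is incomplete.
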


\begin{proof}
Suppose that, for $i,j,r \in [t]$, $k_i,k_j,$ and $k_r$ are pairwise different.
By Proposition~\ref{pro:tail}, we can assume that $k_j = 5$ and $k_r = 6$.
In Tables~\ref{tab:3sizesC} and~\ref{tab:3sizesP}, we show that if $k_i \neq 3$, then there is a  $k_i$-sequence $w' = (\ell_1, \ldots, \ell_{t'})$ for $t' \leq 4$ such that $w'$ contains only numbers 3,5, and 6, and $\valT[P_n]{G^2}{w'} \leq \valE{k_i} + \valE{k_j} + \valE{k_r}$.

\begin{table}[h]
	\centering
	\begin{tabular}{|l|l|l|l|l|l|l|}
		\hline
		$k_i$ & $val(i,j,r)$ & $\ell_1$ & $\ell_2$ & $\ell_3$ & $\ell_4$ & $\valT[P_n]{C_m}{w'}$ \\ 
		\hline
		2  & $\cthreeC + \csixC + $ & 5 & 5 & 3 & & $2(\cfiveC) + $ \\
		&  $\cfiveC = $             & & & & & $\cthreeC =$ \\
		&  $5m + 3 + \cthreeCPtwo$  & & & & & $5m-2+ \cthreeCPtwo$ \\		  
		\hline
		4  & $\cfour + \cfiveC +$   & 5 & 5 & 5 & & $3(\cfiveC) = $ \\ 

		&  $\csixC \geq 6m +3$      & & & & & $6m-3$ \\
		\hline
		7  & $\cfiveC + \csixC + $  & 5 & 5 & 5 & 3 & For $m \leq 10$, \\		  
		&  $\csevenC = 7m + 4$      & & & & & $3(\cfiveC) + $ \\
		&  & & & & & $\cthreeC =$ \\
		&                           & & & & & $ 7m + \cthreeCPtwo -3$ \\		  
		&                           & 6 & 6 & 6 & & For $m \geq 11, 3(\csixC) = $\\
		&  & & & & & $6m+12$ \\
		\hline
	\end{tabular}
	\caption{Case $G^2 \simeq C_m$, where $val(i,j,r) = \valE[C_m]{k_i} + \valE[C_m]{k_j} + \valE[C_m]{k_r}$ }
	\label{tab:3sizesC}
\end{table}

\begin{table}[h]
	\centering
	\begin{tabular}{|l|l|l|l|l|l|l|}
	\hline
	$k_i$ & $val(i,j,r)$ & $\ell_1$ & $\ell_2$ & $\ell_3$ & $\ell_4$ & $\valT[P_n]{P_m}{w'}$ \\ 
		\hline
		2 &  $\cthreeP + \csixP + $ & 5 & 5 & 3 & & $2(\cfiveP) + \cthreeP =$ \\
		&    $\cfiveP = 5m + 1 + \lfloor \frac{m-2}{2} \rfloor$ & & & &             & $5m -2+ \lfloor \frac{m-2}{2} \rfloor$ \\
		\hline		
		4 &  $\cfour + \cfiveP + \csixP = $ & 5 & 5 & 5 & & $3(\cfiveP) = 6m-3$ \\ 
		&    $6m$ &  & & & &  \\
		\hline
		7 &  $\cfiveP + \csixP + $ & 5 & 5 & 5 & 3 & For $m \leq 10$, \\		  
		&    $\csevenP \geq 7m + 1$          &  & & & & $3(\cfiveP) + \cthreeP =$ \\		  
		&              &  & & & & $ 7m + \lfloor \frac{m-2}{2} \rfloor -3$ \\		  
		&              & 6 & 6 & 6 & & For $m \geq 11, 3(\csixP) = 6m+6$\\
		\hline
	\end{tabular}
	\caption{Case $G^2 \simeq P_m$, where $val(i,j,r) = \valE[P_m]{k_i} + \valE[P_m]{k_j} + \valE[P_m]{k_r}$}
	\label{tab:3sizesP}
\end{table}

It remains to prove that $w \neq (3, [p]5), [q]6)$ for $p,q \geq 2$. First, we consider $G^2 \simeq C_m$. We can assume that $w = (3,[2]5,[2]6,[p-2]5,[q-2]6)$. We know that $\valT[P_n]{C_m}{(3,[2]5,[2]6)} = 2(\csixC) + 2(\cfiveC) + \cthreeC= 9m + \cthreeCPtwo + 6$. For $m \leq 17$, $\valT[P_n]{C_m}{([5]5)} = 5(\cfiveC) = 10m-5$ and for $m \geq 18$, $\valT[P_n]{C_m}{([3]6,7)} =  3(\csixC) + \csevenC = 9m+13$, which means that $w$ is not the spectrum of a minimum GDA of $G^1 \circ C_m$.

Finally consider $G^2 \simeq P_m$.
We know that $\valT[P_n]{P_m}{(3,[2]5,[2]6)} = 2(\csixP) + 2(\cfiveP) + \cthreeP = 9m + 3 + \lfloor \frac{m-2}{2} \rfloor$.
For $m \leq 11$, $\valT[P_n]{P_m}{([5]5)} = 5(\cfiveP) = 10m-5$ and for $m \geq 12$, 
$\valT[P_n]{P_m}{([3]6,7)} = 3(\csixP) + \csevenP = 9m+6$, which means that $w$ is not the spectrum of a minimum GDA of $G^1 \circ P_m$.
\end{proof}

The above results reduce the number of sequences that can reach $\gamma_a(F)$ for $G^1 \circ G^2$, $G^1 \in \{C_n,P_n\}$, $G^2 \in \{C_m,P_m\}$, $n \geq 8$, and $m \geq 4$. In fact, we will show that, for a given $F$, $\gamma_a(F)$ can be determined considering at most four sequences, the ones defined in the sequel.

\bigskip
$f_{1,n} =
\begin{cases}
	([p]5) & \mbox{, if } n \equiv 0 \mod 5 \\
	(r,[p]5) & \mbox{, if } n \equiv r \mod 5 \mbox{ for } r \in \{2,3,4\}\\
	([p]5,6) & \mbox{, if } n \equiv 6 \mod 5 \\
\end{cases}$

\bigskip
$f_{2,n} =
\begin{cases}
	([p]5,[q]6) \mbox{ for maximum } p & \mbox{, if } n \neq 19 \\
	(3,[2]5,6) & \mbox{, if } n = 19 \\
\end{cases}$
	
\bigskip
$f_{3,n} = ([p]5,[q]6) \mbox{ for maximum } q$
	
\bigskip
$f_{4,n} =
\begin{cases}
	([q]6) & \mbox{, if }  n \equiv 0 \mod 6 \\
	(s, [q]6) & \mbox{, if }  n \equiv s \mod 6 \mbox{ for } s \in \{3,5\}\\

	([q]6,7) & \mbox{, if } n \equiv 1 \mod 6 \\
	
	(3,5,[q]6) & \mbox{, if } n \equiv 2 \mod 6 \\

	([2]5, [q]6) & \mbox{, if } n \equiv 4 \mod 6
\end{cases}$
\bigskip

For $i \in [4]$, $f_{i,n}$ is an infinite set of sequences, which is associated with at most one sequence if we fix the value of $n$. Therefore, when we can handle $f_{i,n}$ as a set.

\begin{theorem} \label{the:f1f4}
For $n \geq 8, m \geq 4, G^1 \in \{P_n,C_n\}$, and $G^2 \in \{P_m,C_m\}$, it holds $\gamma_a(G^1 \circ G^2) = \min \{\valT{G^2}{f_{1,n}}, \valT{G^2}{f_{2,n}}, \valT{G^2}{f_{3,n}}, \valT{G^2}{f_{4,n}}\}$.
\end{theorem}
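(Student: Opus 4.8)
The plan is to show that, among all feasible $n$-sequences $w$, the minimum value $\valT{G^2}{w}$ is attained by one of the four families $f_{1,n}, \ldots, f_{4,n}$. By Corollary~\ref{cor:dc}, $\gamma_a(G^1 \circ G^2) = \min\{\valT{G^2}{w}\}$ over feasible $w$, so it suffices to prove that every optimal $w$ can be transformed, without increasing its value, into one of the four prescribed sequences. The structural reductions from the previous propositions do almost all the work: Proposition~\ref{pro:atmost7} lets me assume $max(spe(S)) \leq 7$; Proposition~\ref{pro:tail} lets me assume $w$ contains at most one element of $\{2,3,4,7\}$; and Proposition~\ref{pro:3sizes} lets me assume $w$ uses at most three distinct values, and if exactly three, then $w$ is one of the two explicit forms listed there (both of which are instances of $f_{2,n}$ or $f_{4,n}$). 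So the core of the argument is a finite case analysis over how many distinct block sizes survive.

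First I would dispose of the ``three distinct values'' case by quoting Proposition~\ref{pro:3sizes} directly: such a $w$ equals $(3,[p-1]5,6)$ or $(3,5,[q-1]6)$, which I will identify with $f_{2,n}$ (the $n=19$ branch and its relatives) and $f_{4,n}$ (the $n \equiv 2 \bmod 6$ branch) respectively. This reduces everything to sequences built from at most two distinct block sizes. Combining Propositions~\ref{pro:atmost7} and~\ref{pro:tail}, any such optimal $w$ consists of repeated $5$'s and $6$'s, possibly together with a single ``small'' block from $\{2,3,4,7\}$. The second step is then to enumerate these two-value patterns. When only $5$'s occur (plus at most one small block), I would recognize the result as $f_{1,n}$; when only $6$'s occur (plus at most one small block, namely $3$, $5$, or $7$ matching the residues $n \bmod 6$), I would recognize $f_{4,n}$; and when genuine mixtures of $5$'s and $6$'s occur, I would recognize $f_{2,n}$ and $f_{3,n}$, which sweep out the two extreme mixtures (maximum number of $5$-blocks, respectively maximum number of $6$-blocks).

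The decisive step, and the main obstacle, is arguing that within the pure mixtures of $5$'s and $6$'s it is enough to test only the two extremes $f_{2,n}$ and $f_{3,n}$ rather than every intermediate split. The point is that a block of size $6$ contributes $\csixC = 2m+4$ (or $\csixP = 2m+2$) while a block of size $5$ contributes $\cfiveC = \max\{m+4, 2m-1\}$ (or $\cfiveP = 2m-1$); since $6 \cdot (2m-1)/5$ and $2m+4$ straddle each other depending on $m$, the per-vertex cost of $6$-blocks versus $5$-blocks reverses as $m$ grows, so the optimum lies at one of the two endpoints of the feasible $(p,q)$ range, never strictly inside. I would make this precise by writing the total value as an affine function of $p$ (with $q$ determined by $5p + 6q = n$ up to the small correction block) and observing its monotonicity, concluding that the minimizer is $f_{2,n}$ when $5$-blocks are cheaper per vertex and $f_{3,n}$ otherwise. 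The threshold values $m \leq 17$ versus $m \geq 18$ for $C_m$ (and $m \leq 11$ versus $m \geq 12$ for $P_m$) appearing in Proposition~\ref{pro:3sizes} are exactly this crossover, so I would reuse those computations rather than redo them.

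Finally I would verify feasibility: each of $f_{1,n}, \ldots, f_{4,n}$ satisfies the conditions of the feasibility Proposition (first block $\geq 2$, all others $\geq 3$, blocks summing to $n$) for $n \geq 8$, so each is a legitimate spectrum and the minimum over the four is genuinely achievable via Proposition~\ref{pro:valGDA}. Assembling these pieces gives $\gamma_a(G^1 \circ G^2) = \min\{\valT{G^2}{f_{1,n}}, \valT{G^2}{f_{2,n}}, \valT{G^2}{f_{3,n}}, \valT{G^2}{f_{4,n}}\}$, as claimed. The only genuinely delicate bookkeeping is matching the residue classes $n \bmod 5$ and $n \bmod 6$ in the definitions of $f_{1,n}$ and $f_{4,n}$ to the correct correction blocks, but this is routine once the extremal structure is established.
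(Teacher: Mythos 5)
Your overall strategy coincides with the paper's: both arguments invoke Corollary~\ref{cor:dc} together with Propositions~\ref{pro:atmost7}, \ref{pro:tail} and~\ref{pro:3sizes} to cut the candidate spectra down to a finite list of shapes built from $5$'s and $6$'s plus at most one exceptional block, and both settle the pure $5$/$6$ mixtures by noting that the value changes by a fixed amount under each $([6]5)\leftrightarrow([5]6)$ exchange, so only the two extreme splits $f_{2,n}$ and $f_{3,n}$ can be optimal. The feasibility check at the end is likewise the same.

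The gap is in how you dispose of the surviving candidates that are \emph{not} literally among $f_{1,n},\ldots,f_{4,n}$. After the reductions, these include $([q]6,s)$ for $s\in\{2,4\}$ (when $n\equiv 2$ or $4\bmod 6$) and, from the three-distinct-values case, $(3,[p-1]5,6)$ for any $n\equiv 4\bmod 5$. You assert the three-valued sequences are ``instances of $f_{2,n}$ or $f_{4,n}$,'' but $(3,[p-1]5,6)$ is an instance of $f_{2,n}$ only when $n=19$; for $n\geq 22$ the definition of $f_{2,n}$ is $([p]5,[q]6)$ with $p$ maximum, which contains no $3$. Likewise $([q]6,2)$ and $([q]6,4)$ match none of the four families, and your description of the ``only $6$'s'' case silently restricts the correction block to $\{3,5,7\}$. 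These candidates must be shown to be \emph{dominated}: the paper proves $\val{(3,5,[q-1]6)}\leq\val{([q]6,2)}$ and $\val{([2]5,[q-1]6)}\leq\val{([q]6,4)}$, and, for $n\geq 22$, that $(3,[p-1]5,6)$ is beaten by $([p]5,4)$ or by a pure $5$/$6$ mixture, each via an explicit inequality in $m$ (steps $(i)$ and $(ii)$ of its proof, with separate computations for $C_m$ and $P_m$). Calling this ``routine bookkeeping'' of residue classes understates it; without these comparisons the claimed equality is unproved precisely for $n\equiv 2,4\bmod 6$ and for three-valued optima with $n\geq 22$.
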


\begin{proof}
Write $F = G^1 \circ G^2$. Corollary~\ref{cor:external23} and Propositions~\ref{pro:atmost7},~\ref{pro:tail},~\ref{pro:3sizes} imply that there is a sequence $w$ such that $ \valT{G^2}{w} = \gamma_a(F)$ and $w$ is a sequence of one of the following 8 sets of sequences for $G = C$ if $G^1 \simeq C_n$, $G = P$ if $G^1 \simeq P_n$, $p = \lfloor \frac{n}{5} \rfloor$, $q = \lfloor \frac{n}{6} \rfloor$, $r = n \mod 5$, and $s = n \mod 6$:

\begin{enumerate}[$T_1 =$]

	\item $\{([p]5)\}$, \label{c1n}
	\item $\{([p]5,r)$ for $r \in \{2,3,4\}\}$, \label{c1}
	\item $\{([q]6)\}$, \label{c2n}
	\item $\{([q]6,s)$ for $s \in \{2,3,4,5\}\}$, \label{c2}
	\item $\{([q-1]6,7)\}$, \label{c3}
	\item $\{(3,5,[q-1]6)\}$, \label{c4}
	\item $\{(3,[p-1]5,6)\}$, \label{c5}
	\item $\{([p']5,[q']6)$, for all positive integers $p'$ and $q'$ such that $5p' + 6q' = n\}$. \label{c6}
\end{enumerate}

We note that there are values of $n$ such that some of these sets are empty. Therefore, we need to show that, if $w$ belongs to some $T_i$ for $i \in [8]$ and $\valT{G^2}{w} = \gamma_a(F)$, then $w$ appears in some $f_{i,n}$, for $i \in [4]$.

\begin{itemize}
\item The sequences of $T_{\ref{c1n}}, T_{\ref{c1}}, T_{\ref{c2n}}, T_{\ref{c3}}$, and $T_{\ref{c4}}$ appear in $f_{1,n}, f_{1,n}, f_{4,n}, f_{4,n},$ and $f_{4,n}$, respectively, so there is nothing to do for these cases.

\item The sequences of $T_{\ref{c2}}$ appear in $f_{4,n}$ for $s \in \{3,5\}$. We will show: $(i)$ for $s \in \{2,4\}$, $\valT{G^2}{([q]6,s)} \leq \valT{G^2}{w}$ for some $w$ that appears in $f_{j,n}$ for some $j \in [4]$.

\item The 19-sequence of $T_{\ref{c5}}$ appears in $f_{2,n}$. We will show: $(ii)$ for $n \geq 22$, $\valT{G^2}{(3,[p-1]5,6)} \leq \valT{G^2}{w}$ for some $w$ that appears in $f_{j,n}$ for some $j \in [4]$.

\item Only two sequences of $T_{\ref{c6}}$ are considered, one in $f_{2,n}$ and the other in $f_{3,n}$. We will show: $(iii)$ only these two sequences of $T_{\ref{c6}}$ can reach the minimum.
\end{itemize}

Hence, to complete the proof it suffices to prove $(i)$, $(ii)$, and $(iii)$.

\bigskip
\noindent $(i)$
We show that $\valT{G^2}{w} \leq \valT{G^2}{w'}$ for $w \in T_{\ref{c4}}$ and $w' \in T_{\ref{c2}}$ with $s = 2$. First, consider $G^2 \simeq C_m$.
For $G^1 \simeq P_n$, suppose that 
$qx_{6,m}+x_{3,m} < (q-1)x_{6,m} + x_{5,m}+x_{3,m}$ for some $n$. We have
$q(\csixC) + \cthreeC < (q-1)(\csixC) + \cfiveC + \cthreeC$. Then $\csixC < \cfiveC$, a contradiction.
For $G^1 \simeq C_n$, suppose that $qx_{6,m}+x_{4,m} < (q-1)x_{6,m} + x_{5,m}+x_{4,m}$ for some $n$. We have
$q(\csixC) < (q-1)(\csixC) + \cfiveC \Rightarrow \csixC < \cfiveC$, a contradiction.

Now, consider $G^2 \simeq P_m$.
For $G^1 \simeq P_n$, suppose that 
$qy_{6,m} + y_{3,m} < (q-1) y_{6,m} + y_{5,m} + y_{3,m}$ for some $n$. We have
$q(\csixP) + \cthreeP < (q-1)(\csixP) + \cfiveP + \cthreeP$. Then $\csixP < \cfiveP$, a contradiction.
For $G^1 \simeq C_n$, suppose that $qy_{6,m}+y_{4,m} < (q-1)y_{6,m} + y_{5,m} + y_{4,m}$ for some $n$. We have
$q(\csixP) < (q-1)(\csixP) + \cfiveP \Rightarrow \csixP < \cfiveP$, a contradiction.

Next, we show that $\valT{G^2}{([2]5,[q-1]6)} \leq \valT{G^2}{([q]6,4)}$. Suppose that $qx_{6,m}+x_{4,m} < (q-1)x_{6,m}+2x_{5,m}$ for some $n$. We have $q(\csixC) + \cfour < (q-1)(\csixC) + 2(\cfiveC)$. Then $4m+3 < 2(\cfiveC)$,
For $m = 3$, $15 < 14$; $m = 4$, $19 < 16$; $m = 5$, $23 < 18$, a contradiction.

\bigskip
\noindent $(ii)$ We show that $\valT{G^2}{w_{\ref{c5}}} \geq \min \{\valT{G^2}{w_{\ref{c1}}}, \valT{G^2}{w_{\ref{c6}}}\}$ where $w_{\ref{c5}} =(3, [p-1]5, 6) \in T_{\ref{c5}}$, $w_{\ref{c1}} = ([p]5,4) \in T_{\ref{c1}}$, and $w_{\ref{c6}} = ([p'5],[q']6) \in T_{\ref{c6}}$ for $n \geq 24$. Since the 3 sequences have a $24$-subsequence, we do the analysis comparing the correspoding $24$-subsequences $w'_{\ref{c5}}, w'_{\ref{c1}}$, and $w'_{\ref{c6}}$.
First consider $G^1 \simeq P_n$ and $G^2 \simeq C_m$.
If $m \leq 5$, $4x_{5,m}+x_{4,m} = 6m + 15$ while $3x_{5,m}+x_{6,m}+x_{3,m} = 3(m+4) + \csixC + \cthreeC = 6m + 16 + \cthreeCPtwo$.
Then for $m = 4$, $\valT[P_n]{C_m}{w'_{\ref{c1}}} = 39 < \valT[P_n]{C_m}{w'_{\ref{c5}}} = 42$; and for $m = 5$, $\valT[P_n]{C_m}{w'_{\ref{c1}}} = 45 \leq \valT[P_n]{C_m}{w'_{\ref{c5}}} = 48$. If $m \geq 6$, $3x_{5,m}+x_{6,m}+x_{3,m} = 6m-3 + \csixC + m + \mtwoovertwo = 9m + 1 + \mtwoovertwo$ and $4x_{6,m} = 4(\csixC) = 8m + 8$, which means  that $\valT[P_n]{C_m}{w'_{\ref{c6}}} = 8m + 8 < 9m + 1 + \mtwoovertwo = \valT[P_n]{C_m}{w'_{\ref{c5}}}$ for $m \geq 6$.
The result for $G^1 \simeq C_n$ and $G^2 \simeq C_m$ is consequence of the fact that $\valT[C_n]{C_m}{w_{\ref{c5}}} \geq \valT[P_n]{C_m}{w_{\ref{c5}}}$ is true due Corollary~\ref{cor:hierarchy}. It remains to consider $G^1 \in \{C_n,P_n\}$ and $G^2 \simeq P_m$. Now, it suffices to observe that $\valT{P_m}{w'_{\ref{c1}}} = 10m - 5 < \valT{P_m}{w'_{\ref{c5}}} = 10m -2$ for every $m \geq 4$.

\bigskip
\noindent $(iii)$
Suppose that the minimum one is achieved by $([p']5,[q']6)$ for $p' < p$ and $q' < q''$. (We consider $p$ maximum and $q''$ maximum). This means that we can change either $([6]5)$ for $([5]6)$ or vice-versa obtaining a smaller GDA, a contradiction.
\end{proof}

Theorems~\ref{the:G2m3} and~\ref{the:f1f4} lead to a constant-time algorithm for computing $\gamma_a(G^1 \circ G^2)$ for $G^1 \in \{C_n,P_n\}$, $G^2 \in \{C_m,P_m\}$, $n \geq 8$ and $m \geq 3$. It consists in computing at most four values and choosing the minimum one. In the next section, we show that functions $f_{k,n}$ have an homogeneous behavior, which allows one to characterize, for each pair $\{n,m\}$, which function gives the global defensive alliance number of $G^1 \circ G^2$.

\section{Deepening the results} \label{sec:imp}

It is easy to verify that if $n \geq 8$ is such that $f_{i,n}$ and $f_{i+1,n}$ are defined, then there is an integer $m_0$ such that $\valT{G^2}{f_{i,n}} \geq \valT{G^2}{f_{i,n}}$ for $G^2 \in \{C_m,P_m\}$ and $m \geq m_0$. The minimum $m_0$ with this property is the {\em threshold between $f_{i,n}$ and $f_{i+1,n}$} and will be denoted by $t_{n,i}$. If one of the functions is not defined or if $\valT{G^2}{f_{i,n}} = \valT{G^2}{f_{i,n}}$ for every $m$ that both functions are defined, we will say that $t_{n,i}$ is undefined.

\begin{proposition} \label{pro:tn23}
If $t_{n,2}$ is defined for $n$, then $t^{CC}_{n,2} = t^{PC}_{n,2} = 13$ and $t^{CP}_{n,2} = t^{PP}_{n,2} = 8$.
\end{proposition}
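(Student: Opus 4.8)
The plan is to exploit the fact that both $f_{2,n}$ and $f_{3,n}$ are sequences of the form $([p]5,[q]6)$ with $5p+6q=n$; they differ only in how the sum is split, with $f_{2,n}$ maximizing the number $p$ of fives and $f_{3,n}$ maximizing the number $q$ of sixes. The exceptional value $n=19$ can be set aside, since there no sequence $([p]5,[q]6)$ exists, so $f_{3,n}$, and hence $t_{n,2}$, is undefined. First I would pin down the four section costs involved. Using Proposition~\ref{pro:p2-7G2} together with the explicit optimal alliances $X_{5,m},X_{6,m}$ (resp.\ $Y_{5,m},Y_{6,m}$), I would observe that each of these sets leaves both of its boundary copies empty, i.e.\ $s_1=s_k=0$; hence each simultaneously realizes the unconstrained minimum $\gamma_a(P_k\circ G^2)$ and the constrained minima defining $\valI{k}$ and $\valE{k}$. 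Sandwiching $\gamma_a(P_k\circ G^2)\le\valE{k}\le\valI{k}\le|X_{k,m}|=\gamma_a(P_k\circ G^2)$ forces equality, yielding $\valE[C_m]{5}=\valI[C_m]{5}=\cfiveC$ and $\valE[C_m]{6}=\valI[C_m]{6}=\csixC$ for $G^2\simeq C_m$, and $\valE[P_m]{5}=\valI[P_m]{5}=\cfiveP$ and $\valE[P_m]{6}=\valI[P_m]{6}=\csixP$ for $G^2\simeq P_m$.

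Because $val_e$ and $val_i$ agree on the blocks $5$ and $6$, the external/internal distinction collapses, and for both $G^1\simeq P_n$ and $G^1\simeq C_n$ one obtains the single uniform expression
\[
\valT{G^2}{([p]5,[q]6)} = p\,\valI{5} + q\,\valI{6}.
\]
Next I would write $f_{2,n}$ as $(p_2,q_2)$ and $f_{3,n}$ as $(p_3,q_3)$. Since both satisfy $5p+6q=n$ and $f_{2,n}\neq f_{3,n}$ whenever $t_{n,2}$ is defined, the two solutions differ by the elementary exchange of six fives for five sixes, so $p_2=p_3+6k$ and $q_2=q_3-5k$ for some integer $k\ge 1$. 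Substituting into the displayed identity gives
\[
\valT{G^2}{f_{2,n}} - \valT{G^2}{f_{3,n}} = k\bigl(6\,\valI{5} - 5\,\valI{6}\bigr),
\]
so the sign of the gap is governed by $6\,\valI{5}-5\,\valI{6}$ independently of $n$ and $k$; this is precisely why the threshold will turn out to be a constant.

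Finally I would solve $6\,\valI{5}\ge 5\,\valI{6}$ in each case. For $G^2\simeq C_m$, using $\cfiveC=\max\{m+4,2m-1\}$ (equal to $2m-1$ for $m\ge 5$ and to $m+4$ for $m\le 5$), the inequality $6\,\cfiveC\ge 5\,\csixC$ reduces to $2m-26\ge 0$ in the regime $m\ge 5$ and fails for $m\le 5$; it therefore fails for every $4\le m\le 12$ and holds for every $m\ge 13$, so $t^{CC}_{n,2}=t^{PC}_{n,2}=13$. For $G^2\simeq P_m$ the inequality $6\,\cfiveP\ge 5\,\csixP$ reads $6(2m-1)\ge 5(2m+2)$, i.e.\ $2m-16\ge 0$, which fails for $m\le 7$ and holds for $m\ge 8$, giving $t^{CP}_{n,2}=t^{PP}_{n,2}=8$. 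The main obstacle I anticipate is the bookkeeping for the degenerate splits, namely when $f_{2,n}$ consists only of fives ($q_2=0$) or $f_{3,n}$ only of sixes ($p_3=0$), where the end sections of the path change type; the identity $val_e=val_i$ on the blocks $5$ and $6$ is exactly what neutralizes these cases, ensuring that $\valT{G^2}{([p]5,[q]6)}=p\,\valI{5}+q\,\valI{6}$ carries no boundary corrections and that path and cycle share the same threshold.
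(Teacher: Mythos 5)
Your proof is correct and follows essentially the same route as the paper's: both reduce the comparison of $f_{2,n}$ and $f_{3,n}$ to the exchange of six fives for five sixes, obtain the difference $k\bigl(6\,\valI{5}-5\,\valI{6}\bigr)$ using $\valE{k}=\valI{k}$ for $k\in\{5,6\}$, and solve the resulting linear inequalities to get $2m-26\ge 0$ (threshold $13$) and $2m-16\ge 0$ (threshold $8$). Your explicit sandwich argument justifying $\valE{k}=\valI{k}$ via $X_{k,m}$ and $Y_{k,m}$, and your separate check of the $m\le 5$ regime where $\cfiveC=m+4$, supply details the paper leaves implicit.
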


\begin{proof}
Let $w_2 = ([p]5,[q]6) \in f_{2,n}$ and $w_3 = ([p']5,[q']6) \in f_{3,n}$. If $\valT{G^2}{w_2} \neq \valT{G^2}{w_3}$, then $p > p'$. Furthermore,
$p = 6k + p'$ and $q' = 5k + q$ for $k \geq 1$.

Since $\valI{k} = \valE{k}$ for $G^2 \in C_m$ and $k \in \{5,6\}$, we have
$\valT{G^2}{w_2} = p\times\valI{5} + q \times \valI{6}$ and
$\valT{G^2}{w_3} = p'\times\valI{5} + q' \times \valI{6}$. Replacing, we have $\valT{G^2}{w_2} = (6k + p')\valI{5} + (q'-5k) \valI{6} = 6k \times \valI{5} - 5k \times \valI{6} + \valT{G^2}{w_3}$.

For $G^2 \simeq C_m$, we have $\valT{C_m}{w_2} = 6k(2m-1) - 5k(\csixC) + \valT{C_m}{w_3} = k(2m-26) + \valT{C_m}{w_3}$, which meanst that $t^{PC}_{n,2} = t^{CC}_{n,2} = 13$.

For $G^2 \simeq P_m$, we have $\valT{P_m}{w_2} = 6k(\cfiveP) - 5k(\csixP) + \valT{P_m}{w_3} = k(2m-16) + \valT{P_m}{w_3}$, which meanst that $t^{PP}_{n,2} = t^{CP}_{n,2} = 8$.
\end{proof}

\begin{proposition}
For every $n$ and $m \geq 4$, $t_{n,1}$ is given in Table~{\em \ref{tab:tn1}}.
\end{proposition}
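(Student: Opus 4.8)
The plan is to mirror the proof of Proposition~\ref{pro:tn23}: for each residue $r = n \bmod 5$ I will put $f_{1,n}$ and $f_{2,n}$ in closed form, write the difference $\valT{G^2}{f_{1,n}} - \valT{G^2}{f_{2,n}}$ as an affine function of $m$, and return the least $m$ at which it becomes nonnegative (beyond which $f_{2,n}$ is the cheaper sequence). The sequence $f_{2,n} = ([p]5,[q]6)$ with $p$ maximum forces $q$ to be the least positive integer congruent to $r$ modulo $5$, that is $q = 5$ when $r = 0$ and $q = r$ when $r \in \{1,2,3,4\}$, whereas $f_{1,n}$ equals $([p]5)$ when $r = 0$, equals $(r,[p]5)$ when $r \in \{2,3,4\}$, and equals $([p]5,6)$ when $r = 1$. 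Since both sequences describe the same $n$ and differ only by a bounded exchange of sections, every term depending on $n$ cancels in the difference, leaving an affine function of $m$; this is exactly the phenomenon already exploited for $t_{n,2}$.

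First I would clear the degenerate residues. When $r = 1$ the two sequences coincide, $([p]5,6) = ([p]5,[1]6)$, so $f_{1,n} = f_{2,n}$ and $t_{n,1}$ is undefined. When $r = 0$ both sequences use only the sizes $5$ and $6$; for these the minimum GDA of $P_k \circ G^2$ is already realized with both end copies empty, by the constructions $X_{k,m}$ and $Y_{k,m}$ of Proposition~\ref{pro:p2-7G2}, so $\valE{k} = \valI{k}$ on both $C_m$ and $P_m$, the external/internal distinction disappears, and the computation reduces to the one done for $t_{n,2}$, giving equal thresholds for $G^1 \simeq P_n$ and $G^1 \simeq C_n$. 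The substance is the case $r \in \{2,3,4\}$. Trading the single $r$-section of $f_{1,n}$ together with $r$ copies of $5$ for $r$ copies of $6$ produces $f_{2,n}$, and, using $\valE{5} = \valI{5}$ and $\valE{6} = \valI{6}$, the difference collapses to $\valE{r} + r(\valI{5} - \valI{6})$ when $G^1 \simeq P_n$ (the $r$-section lies at an end of the sequence and is thus evaluated by $\valE{r}$; for $r = 3$ this placement is also licensed by Corollary~\ref{cor:external23}) and to $\valI{r} + r(\valI{5} - \valI{6})$ when $G^1 \simeq C_n$.

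Finally I would substitute the section values of Proposition~\ref{pro:p2-7G2} and solve the resulting inequality for $m$. Because $\valI{5} - \valI{6}$ equals $-5$ for $C_m$ (with $m \geq 5$) and $-3$ for $P_m$, the threshold is the least $m$ with $\valE{r} \geq 5r$ or $\valI{r} \geq 5r$ for $G^2 \simeq C_m$, and with $\valE{r} \geq 3r$ or $\valI{r} \geq 3r$ for $G^2 \simeq P_m$, according as $G^1$ is a path or a cycle. At this point the four combinations separate: for $r = 4$ one has $\valE{4} = \valI{4} = \cfour$, so the path and cycle thresholds agree, while for $r \in \{2,3\}$ one has $\valI{r} = \cfour$ but $\valE{r} = \cthreeC$ (recall $\valE{2} = \valE{3}$), with its $P_m$ counterpart $m + \mtwoovertwo$, so the two thresholds genuinely differ. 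I expect the main obstacle to be precisely this last inversion: the floor and maximum terms hidden in $\cthreeC$ and in $m + \mtwoovertwo$ make the inequality sensitive to the parity of $m$ and to small-$m$ corrections (and, for $C_m$, $\valI{5} - \valI{6}$ is $-4$ rather than $-5$ at $m = 4$), so each entry of Table~\ref{tab:tn1} has to be confirmed by testing the two or three smallest admissible values of $m$ individually rather than by a single algebraic solution. A short final check disposes of the boundary situations, namely those entries where the threshold already sits at $m = 4$, so that $f_{2,n}$ dominates for every admissible $m$, and the small values of $n$ that force the exceptional form of $f_{2,n}$ recorded in its definition.
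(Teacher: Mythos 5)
Your plan is, in substance, the paper's own proof: the paper also splits on $r = n \bmod 5$, writes $\valT{G^2}{f_{1,n}}$ and $\valT{G^2}{f_{2,n}}$ in terms of the section values $x_{k,m}$, $y_{k,m}$ of Proposition~\ref{pro:p2-7G2}, cancels the common block of $5$-sections, and solves the residual inequality in $m$ for each of the four operand combinations. Your identity ``$f_{2,n}$ wins once $\valE{r} + r(\valI{5}-\valI{6}) \geq 0$'' (with $\valI{r}$ in place of $\valE{r}$ for $G^1 \simeq C_n$) is exactly that cancellation stated once instead of sixteen times, and it is legitimate because $\valE{k}=\valI{k}$ for $k\in\{4,5,6\}$ is witnessed by the constructions $X_{k,m}$, $Y_{k,m}$. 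Your treatment of $r=0$ (six $5$'s against five $6$'s, reducing to the $t_{n,2}$ computation) and $r=1$ ($f_{1,n}=f_{2,n}$, threshold undefined) matches the paper's Cases 5 and 1.

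Two things are missing or need repair. First, $n=19$ is not a ``short final check'': there $f_{2,19}=(3,[2]5,6)$, the exchange against $f_{1,19}=(4,[3]5)$ trades a $4$-section plus one $5$-section for a $3$-section plus one $6$-section, so the controlling quantity is $\valE{4}+\valI{5}-\valE{3}-\valI{6}$, which is \emph{not} an instance of your formula, and the outcome is qualitatively different: for $G^1\simeq C_{19}$ the term $\valI{4}$ cancels and one is left comparing $\cfiveC$ with $\csixC$ (resp.\ $\cfiveP$ with $\csixP$), which never favours $f_{2,19}$, so $t_{19,1}$ is undefined there, while for $G^1\simeq P_{19}$ the thresholds are $9$ and $5$. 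These four comparisons must be carried out explicitly. Second, your closing remark that each table entry must be confirmed numerically is load-bearing, not a formality: for $n\equiv 3\pmod 5$ and $G^1\circ G^2 = P_n\circ C_m$, the inequality that both you and the paper reach, $\cthreeC \geq 3\bigl((\csixC)-(\cfiveC)\bigr)$, i.e.\ $m+\mtwoovertwo\geq 15$, is first satisfied at $m=11$, not at the tabulated value $9$; so the verification step is precisely where your argument either corrects Table~\ref{tab:tn1} or fails to establish the proposition as literally stated, and it cannot be waved through.
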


\begin{table}[h]
	\centering
	\begin{tabular}{|l|l|l|l|l|}
		\hline
		$n \mod 5$ & $t^{PC}_{n,1}$ & $t^{CC}_{n,1}$ & $t^{PP}_{n,1}$ & $t^{CP}_{n,1}$ \\ 
		\hline
		0 & 13 & 13 & 8 & 8 \\
		1 & $\ast$ & $\ast$ & $\ast$ & $\ast$ \\
		2 & 8 & 6 & 5 & 4 \\
		3 & 9 & 8 & 7 & 5 \\
		$4, n \neq 19$ & 11 & 11 & 7 & 7 \\
		$n = 19$ & 9 & $\ast$ & 5 & $\ast$ \\
		\hline
	\end{tabular}
	\caption{$t_{n,1}$.}
	\label{tab:tn1}
\end{table}

\begin{proof}
\noindent {\bf Case 1} ($n \equiv 1 \mod 5, f_{1,n} =f_{2,n}$)

\bigskip
\noindent{\bf Case 2} ($n \equiv 2 \mod 5, f_{1,n} = (2,[p]5)$, and $f_{2,n} = ([p-2]5,[2]6)$)

For $P_n \circ C_m$, using Corollary~\ref{cor:dc}, $\valT[P_n]{C_m}{f_{1,n}} = px_{5,m}+x_{3,m} \geq \valT[P_n]{C_m}{f_{2,n}} = (p-2)x_{5,m}+2x_{6,m}$.

\[2(\cfour) + m + \mtwoovertwo \geq 2(\csixC) \]

\[5m - 2 + \mtwoovertwo \geq 4m + 8 \Rightarrow m + \mtwoovertwo \geq 10 \]

\noindent which is true for $m \geq 8$.

For $C_n \circ C_m$, using Corollary~\ref{cor:dc},
$\valT[C_n]{C_m}{f_{1,n}} = px_{5,m}+x_{4,m} \geq \valT[C_n]{C_m}{f_{2,n}} = (p-2)x_{5,m}+2x_{6,m}$

\[p(\cfour) + \cfour \geq (p-2)(\cfour) + 2(\csixC)\]

\[3(\cfour) \geq 2(\csixC) \Rightarrow 6m - 3 \geq 4m + 8 \Rightarrow 2m \geq 11\]

\noindent that is true for $m \geq 6$.

For $P_n \circ P_m$, using Corollary~\ref{cor:dc},
$\valT[P_n]{P_m}{f_{1,n}} = py_{5,m}+y_{3,m} \geq \valT[P_n]{P_m}{f_{2,n}} = (p-2)y_{5,m}+2y_{6,m}$.

\[2(\cfour) + \cthreeP \geq 2(\csixP) \]

\[m + \mtwoovertwo \geq 6\]

\noindent which is true for $m \geq 5$.

For $C_n \circ P_m$, using Corollary~\ref{cor:dc},
$\valT[C_n]{P_m}{f_{1,n}} = py_{5,m}+y_{4,m} \geq \valT[C_n]{P_m}{f_{2,n}} = (p-2)y_{5,m}+2y_{6,m}$.

\[2(\cfour) + \cfour \geq 2(\csixP) \Rightarrow 2m \geq 7\]

\noindent which is true for $m \geq 4$.

\bigskip
\noindent{\bf Case 3} ($n \equiv 3 \mod 5, f_{1,n} = (3,[p]5)$, and $f_{2,n} = ([p-3]5,[3]6)$)

For $P_n \circ C_m$,
$\valT[P_n]{C_m}{f_{1,n}} = px_{5,m} + x_{3,m} \geq \valT[P_n]{C_m}{f_{2,n}} = (p-3)x_{5,m} + 3x_{6,m}$.

\[ 3(\cfiveC) + \cthreeC \geq 3(\csixC)\]

\noindent which is true for $m \geq 9$.

For $C_n \circ C_m$,
$\valT[C_n]{C_m}{f_{1,n}} = px_{5,m} + x_{4,m} \geq \valT[C_n]{C_m}{f_{2,n}} = (p-3)x_{5,m} + 3x_{6,m}$.

\[ 3(\cfiveC) + \cfour \geq 3(\csixC)\]

\noindent which is valid for $m \geq 8$.

For $P_n \circ P_m$,
$\valT[P_n]{P_m}{f_{1,n}} = py_{5,m} + y_{3,m} \geq \valT[P_n]{P_m}{f_{2,n}} = (p-3)y_{5,m} + 3y_{6,m}$.

\[ 3(\cfiveP) + \cthreeP \geq 3(\csixP)\]

\[\cthreeP \geq 9\]

\noindent which is true for $m \geq 7$.

For $C_n \circ P_m$,
$\valT[C_n]{P_m}{f_{1,n}} = py_{5,m} + y_{4,m} \geq \valT[C_n]{P_m}{f_{2,n}} = (p-3)y_{5,m} + 3y_{6,m}$.

\[ 3(\cfiveP) + \cfour \geq 3(\csixP)\]

\[2m \geq 10\]

\noindent which is true for $m \geq 5$.

\bigskip
\noindent{\bf Case 4} ($n \equiv 4 \mod 5, n \neq 19, f_{1,n} = (4,[p]5)$, and $f_{2,n} = ([p-4]5,[4]6)$)

For $P_n \circ C_m$ and $C_n \circ C_m$, we have $4x_{5,m} + x_{4,m} \geq 4x_{6,m}$

\[4(\cfiveC) + \cfour \geq 4(\csixC)\]

\noindent which is true for $m \geq 11$.

For $P_n \circ P_m$ and $C_n \circ P_m$, we have $4y_{5,m} + y_{4,m} \geq 4y_{6,m}$

\[4(\cfiveP) + \cfour \geq 4(\csixP) \Rightarrow 2m \geq 13\]

\noindent which is true for $m \geq 7$.

\bigskip
\noindent{\bf Case 5} ($n \equiv 0 \mod 5, f_{1,n} = ([p]5)$, and $f_{2,n} = ([p-6]5,[5]6)$)

For $P_n \circ C_m$ and $C_n \circ C_m$, we have

\[6x_{5,m} \geq 5x_{6,m} \Rightarrow 6(\cfour) \geq 5(\csixC) \Rightarrow 12m - 6 \geq 10m +20 \Rightarrow 2m \geq 26 \]

\noindent which is true for $m \geq 13$.

For $P_n \circ P_m$ and $C_n \circ P_m$, we have

\[6y_{5,m} \geq 5y_{6,m} \Rightarrow 6(\cfiveP) \geq 5(\csixP) \Rightarrow 12m - 6 \geq 10m + 10\]

\noindent which is true for $m \geq 8$.

\bigskip
\noindent{\bf Case 6} ($n = 19, f_{1,n} = (4,[3]5)$, and $f_{2,n} = (3,[2]5,6)$)

For $P_n \circ C_m$,

\[3(\cfiveC) + \cfour \geq 2(\cfour) + \csixC + m + \mtwoovertwo \]

\[ 8m - 4 \geq 7m +2 + \mtwoovertwo \Rightarrow m \geq 6 + \mtwoovertwo \]

\noindent that is true for $m \geq 9$.

For $C_n \circ C_m$, $\valT[C_n]{C_m}{f_{1,n}} = 3x_{5,m}+x_{4,m} \geq \valT[C_n]{C_m}{f_{2,n}} = 2x_{5,m}+x_{6,m}+x_{4,m}$.

\[\cfiveC \geq \csixC \]

\noindent since there is no $m$ satisfying the above inequality, $t^{CP}_{19,1}$ is not defined.

For $P_n \circ P_m$, 

\[3(\cfiveP) + \cfour \geq 2(\cfiveP) + \csixP + \cthreeP \]

\[8m - 4 \geq 7m + \mtwoovertwo \Rightarrow m - \mtwoovertwo \geq 4 \]

\noindent that is true for $m \geq 5$.

For $C_n \circ P_m$, 

\[3(\cfiveP) + \cfour \geq 2(\cfiveP) + \csixP + \cfour \]

\[8m - 4 \geq 8m -1 \]

\noindent since there is no $m$ satisfying the above inequality, $t^{CC}_{19,1}$ is not defined.
\end{proof}

\begin{proposition} \label{pro:tn34}
For $n \geq 8$ and $m \geq 4$, $t_{n,3}$ is given in Table~{\em \ref{tab:tn3}}.	
\end{proposition}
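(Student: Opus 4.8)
The plan is to imitate the two preceding propositions (Proposition~\ref{pro:tn23} for $t_{n,2}$ and the statement immediately before this one for $t_{n,1}$): fix the residue $s = n \bmod 6$, write $f_{3,n}$ and $f_{4,n}$ explicitly, evaluate $\val{f_{3,n}}$ and $\val{f_{4,n}}$ through the closed formulas of Proposition~\ref{pro:p2-7G2}, and then solve the inequality $\val{f_{3,n}} \geq \val{f_{4,n}}$ for $m$. First I would pin down the shape of $f_{3,n}$: since $f_{3,n} = ([p]5,[q]6)$ with $q$ maximal, $p$ is the least non-negative solution of $5p \equiv n \pmod 6$, i.e. $p \equiv -n \pmod 6$, so $p$ equals $0,5,4,3,2,1$ as $s = 0,1,2,3,4,5$. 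Matching this against the definition of $f_{4,n}$ shows that the two sequences are literally equal when $s \in \{0,4,5\}$ (they are $([q]6)$, $([2]5,[q]6)$ and $(5,[q]6)$), so $t_{n,3}$ is undefined for those residues; for $s \in \{1,2,3\}$ they differ, but only inside a bounded block, because both share one long common run of $6$'s.

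The second step is to replace the full comparison by the comparison of these bounded blocks. Cancelling the common run, the surplus of $f_{3,n}$ over $f_{4,n}$ is $([5]5)$ against $([3]6,7)$ when $s=1$, and $([3]5)$ against $(3,[2]6)$ when $s \in \{2,3\}$ (each block is balanced: $25 = 3\cdot 6+7$ and $15 = 3 + 2\cdot 6$). As already used in the proof of Proposition~\ref{pro:tn23}, $\valI{k} = \valE{k}$ for $k \in \{5,6\}$, and the same equality holds for $k=7$ because the optimal sets $X_{7,m}$ and $Y_{7,m}$ of Proposition~\ref{pro:p2-7G2} have empty first and last copies; hence for sections of sizes $5,6,7$ it is immaterial whether they lie at a path end or in the interior, and the $P_n$ and $C_n$ computations can differ only through the single $3$-section occurring in the $s\in\{2,3\}$ block.

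The third step is to solve the one-variable inequalities. For $s=1$ the inequality is $5\,x_{5,m} \geq 3\,x_{6,m} + x_{7,m}$ for $G^2 = C_m$ and the same with $y$'s for $G^2=P_m$; these carry no boundary ambiguity, so the $P_n$ and $C_n$ thresholds coincide, and for $m \geq 5$ they read $5(2m-1) \geq 3(\csixC)+\csevenC$ and $5(2m-1)\geq 3(\csixP)+\csevenP$, linear in $m$. For $s\in\{2,3\}$ I would split on $G^1$. When $G^1 = C_n$ the $3$-section is forced to be internal and costs $\valI{3}=\valI{4}=2m-1$, so the inequality is $3(2m-1) \geq (2m-1) + 2(\csixC)$ for $C_m$ and $3(2m-1)\geq (2m-1)+2(\csixP)$ for $P_m$; in both the two sides differ by a negative constant, hence $f_{3,n}$ is strictly cheaper for every $m$ and $t_{n,3}$ is undefined in the two cyclic columns. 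When $G^1 = P_n$, Corollary~\ref{cor:external23} lets the $3$-section sit at a path end, where it costs $\valE{3}$, equal to $\cthreeC$ for $C_m$ and $\cthreeP$ for $P_m$, and the inequality becomes $3(2m-1) \geq \cthreeC + 2(\csixC)$ (resp.\ $3(2m-1) \geq \cthreeP + 2(\csixP)$).

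The main obstacle I anticipate is the floor term $\mtwoovertwo$ hidden in $\cthreeC$ and $\cthreeP$, which enters exactly in the two path columns of the $s\in\{2,3\}$ rows: expanding it makes the inequality parity-dependent (the crossover falls one unit earlier for odd $m$ than for even $m$), so the exact threshold cannot be read off a single linear solve but must be fixed by testing consecutive integers and confirming that the difference, once non-negative, stays non-negative — i.e. that it is non-decreasing in $m$, which is what entitles us to call the first feasible $m$ the threshold. Everything else is a routine linear comparison of the macro values of Proposition~\ref{pro:p2-7G2}; assembling the six residue rows and the four columns $t^{PC}_{n,3}, t^{CC}_{n,3}, t^{PP}_{n,3}, t^{CP}_{n,3}$ then yields Table~\ref{tab:tn3}, the undefined entries sitting exactly at $s\in\{0,4,5\}$ (all columns) and at the two cyclic columns of $s\in\{2,3\}$.
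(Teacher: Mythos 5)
Your plan is essentially the paper's own proof: it cases on $n \bmod 6$, writes $f_{3,n}=([p]5,[q]6)$ with $p\equiv -n \pmod 6$, cancels the common run of $6$'s against $f_{4,n}$, and reduces each comparison to a linear inequality in $m$ via the values of Proposition~\ref{pro:p2-7G2} (the paper treats residues $2$ and $3$ as separate cases but obtains the same reduced inequality you get by unifying them, and declares $t_{n,3}$ undefined exactly where you do). Your caution about the floor term $\mtwoovertwo$ and about verifying monotonicity of the difference before naming the first feasible $m$ is well placed: that is precisely the step the paper treats most casually (it reads off ``true for $m\geq 6$'' from $m \geq 7 + \mtwoovertwo$ in the $P_n\circ P_m$ subcase), so carrying out your consecutive-integer check there is the one point where your write-up must be done carefully rather than copied from a single linear solve.
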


\begin{table}[h]
	\centering
	\begin{tabular}{|l|l|l|l|l|}
		\hline
		$n \mod 6$ & $t^{PC}_{n,3}$ & $t^{CC}_{n,3}$ & $t^{PP}_{n,3}$ & $t^{CP}_{n,3}$ \\ 
		\hline
		0 & $\ast$ & $\ast$ & $\ast$ & $\ast$ \\		  
		1 & 18 & 18 & 11 & 11 \\ 
		2 & 19 & $\ast$ & 6 & $\ast$ \\
		3 & 19 & $\ast$ & 6 & $\ast$ \\ 
		4 & $\ast$ & $\ast$ & $\ast$ & $\ast$ \\ 
		5 & $\ast$ & $\ast$ & $\ast$ & $\ast$ \\ 
		\hline
	\end{tabular}
	\caption{$t_{n,3}$.}
	\label{tab:tn3}
\end{table}

\begin{proof}
\bigskip
\noindent{\bf Case 1} ($n \equiv 1 \mod 6, f_{3,n} = ([5]5,[q-4]6)$, and $f_{4,n} = ([q-1]6,7)$)

For $P_n \circ C_m$ and $C_n \circ C_m$,
$5x_{5,m} \geq 3x_{6,m} + x_{7,m}$.

\[ 5(\cfiveC) \geq 3(\csixC) + \csevenC\]

From, $10m - 5 \geq 9m +13$, we have that $t_{n,3} = 18$.

For $P_n \circ P_m$ and $C_n \circ P_m$, we can write $5y_{5,m} \geq 3y_{6,m} + y_{7,m}$. Thus

\[ 5(\cfiveP) \geq 3(\csixP) + \csevenP\]

From, $10m - 5 \geq 9m +6$, we have that $t_{n,3} = 11$.

\bigskip
\noindent{\bf Case 2} ($n \equiv 2 \mod 6, f_{3,n} = ([4]5,[q-3]6)$, and $f_{4,n} = (3,5,[q-1]6)$)

For $P_n \circ C_m$, we have $4x_{5,m} \geq 2x_{6,m} + x_{5,m} + x_{3,m}$.

$3(\cfiveC) \geq 2(\csixC) + \cthreeC$. Thus

\[6m -3 \geq 5m + 8 + \mtwoovertwo \Rightarrow m \geq 11 + \mtwoovertwo\]

\noindent is true for $m \geq 19$.

For $C_n \circ C_m$, we have $4x_{5,m} \geq 2x_{6,m} + x_{5,m} + x_{4,m}$.

Since there is no positive $m$ satisfying $3(\cfiveP) \geq \cfiveP + \cfour$, $t_{n,3}$ is undefined for this case.

For $P_n \circ P_m$, we have $4y_{5,m} \geq 2y_{6,m} + y_{5,m} + y_{3,m}$.

$3(\cfiveP) \geq 2(\csixP) + \cthreeP$. Then

\[6m -3 \geq 5m + 4 + \mtwoovertwo \Rightarrow m \geq 7 + \mtwoovertwo\]

\noindent is true for $m \geq 6$.

For $C_n \circ P_m$, we have $4y_{5,m} \geq 2y_{6,m} + y_{5,m} + y_{4,m}$.

$3(\cfiveP) \geq 2(\csixP) + \cfour$. Since $6m -3 \geq 6m + 3$ is not true for any positive $m$, $t_{n,3}$ is undefined for this case.

\bigskip
\noindent{\bf Case 3} ($n \equiv 3 \mod 6, f_{3,n} = ([3]5,[q-2]6)$, and $f_{4,n} = (3,[q]6)$)

For $P_n \circ C_m$, one has $3x_{5,m} \geq 2x_{6,m} + x_{3,m} \Rightarrow 3(\cfiveC) \geq 2(\csixC) + \cthreeC$

\[6m - 3 \geq 4m + 8 + m + \mtwoovertwo \Rightarrow m \geq 11 + \mtwoovertwo \]

\noindent that is true for $m \geq 19$.

For $C_n \circ C_m$, one has $3x_{5,m} \geq 2x_{6,m} + x_4\Rightarrow 3(\cfiveC) \geq 2(\csixC) + \cfour$. From $6m - 3 \geq 6m + 7$, we conclude that $t_{n,3}$ is undefined for this case.

For $P_n \circ P_m$, one has $3y_{5,m} \geq 2y_{6,m} + y_{3,m} \Rightarrow 3(\cfiveP) \geq 2(\csixP) + \cthreeP$.

\[6m - 3 \geq 5m + 4 + \mtwoovertwo \Rightarrow m + \mtwoovertwo \geq 7\]

\noindent which is true for $m \geq 6$.

For $C_n \circ P_m$, one has $3y_{5,m} \geq 2y_{6,m} + y_{4,m} \Rightarrow 3(\cfiveP) \geq 2(\csixP) + \cfour$.

Since $6m - 3 \geq 6m + 3$ is not true for any positive $m$, $t_{n,3}$ is undefined for this case.

\bigskip
\noindent{\bf Case 4} ($n \equiv 4 \mod 6, f_{3,n} = ([2]5,[q-1]6)$, and $f_{4,n} = ([2]5,[q-1]6)$)

Since $f_{n,3} = f_{n,4}$, $t_{n,3}$ is undefined for this case.

\bigskip
\noindent{\bf Case 5} ($n \equiv 5 \mod 6, f_{3,n} = (5,[q]6)$, and $f_{4,n} = (5,[q]6)$)

Since $f_{n,3} = f_{n,4}$, $t_{n,3}$ is undefined for this case.

\bigskip
\noindent{\bf Case 6} ($n \equiv 0 \mod 6, f_{3,n} = ([q]6)$, and $f_{4,n} = ([q]6)$).

Since $f_{n,3} = f_{n,4}$, $t_{n,3}$ is undefined for this case.
\end{proof}

\begin{corollary} \label{cor:n8m4}
For $n \geq 8, m \geq 4, G^1 \in \{C_n,P_n\},$ and $G^2 \in \{C_m,P_m\}$, it holds

$\gamma_a(G^1 \circ G^2)  = 
\begin{cases}
\valT{G^2}{f_{1,n}} & \mbox{, if } m < \min\{t_{n,1}, t_{n,2}\} \\
\valT{G^2}{f_{2,n}} & \mbox{, if } t_{n,1} \mbox{ is defined and } t_{n,1} \leq m \leq t_{n,2} \\
\valT{G^2}{f_{3,n}} & \mbox{, if } t_{n,3} \mbox{ is defined and } t_{n,2} \leq m < t_{n,3} \\
\valT{G^2}{f_{4,n}} & \mbox{, if } m \geq \max\{t_{n,3}, t_{n,2}\}
\end{cases}$	
\end{corollary}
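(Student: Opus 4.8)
The plan is to convert the four-term minimum provided by Theorem~\ref{the:f1f4} into the stated case split. By that theorem, $\gamma_a(G^1 \circ G^2) = \min\{\valT{G^2}{f_{1,n}}, \valT{G^2}{f_{2,n}}, \valT{G^2}{f_{3,n}}, \valT{G^2}{f_{4,n}}\}$, so for each fixed $n$ and each choice of $(G^1,G^2)$ it remains only to decide which of the four candidates is smallest as $m$ varies, and to match this with the four displayed cases. I would fix $n$ and write $g_i(m) = \valT{G^2}{f_{i,n}}$ for $i \in [4]$.

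First I would observe that each $g_i$ is piecewise affine in $m$: every term $\valI{k}$ and $\valE{k}$ appearing in Proposition~\ref{pro:p2-7G2} is affine once $m \ge 6$, so $g_i$ is affine for $m \ge 6$, the two remaining values $m \in \{4,5\}$ being checked directly. Moreover the coefficient of $m$ in $g_i$ is governed by the number of $6$-sections in $f_{i,n}$: the computation in the proof of Proposition~\ref{pro:tn23} shows that trading a block $([6]5)$ for a block $([5]6)$ — which keeps the total $n$ fixed — strictly decreases this coefficient. Since $f_{1,n}, f_{2,n}, f_{3,n}, f_{4,n}$ carry a weakly increasing number of $6$-sections, their slopes obey $g_1' \ge g_2' \ge g_3' \ge g_4'$, so $g_1, \dots, g_4$ are lines of weakly decreasing slope and their lower envelope (which equals $\gamma_a$) is met in the index order $1,2,3,4$ as $m$ grows.

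The crux is then the ordering of the breakpoints. By definition $t_{n,i}$ is the least $m$ for which $g_i(m) \ge g_{i+1}(m)$, and affineness of $g_i - g_{i+1}$ turns this into the single-crossing statement $g_i \le g_{i+1}$ for $m < t_{n,i}$ and $g_i \ge g_{i+1}$ for $m \ge t_{n,i}$, with an undefined $t_{n,i}$ signalling that one of $f_{i,n}, f_{i+1,n}$ is absent or that $g_i \equiv g_{i+1}$. I would then verify the key inequalities $t_{n,1} \le t_{n,2} \le t_{n,3}$ whenever all three are defined; this is precisely what prevents any candidate from being dominated on its nominal interval, and it can be read off by comparing Table~\ref{tab:tn1}, Proposition~\ref{pro:tn23} (which gives $t_{n,2} \in \{8,13\}$), and Table~\ref{tab:tn3}. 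With this ordering in hand, transitivity settles every non-adjacent comparison: on $m < \min\{t_{n,1}, t_{n,2}\}$ one has $g_1 \le g_2 \le g_3 \le g_4$; on $t_{n,1} \le m \le t_{n,2}$ one has $g_2 \le g_1$ and $g_2 \le g_3 \le g_4$; on $t_{n,2} \le m < t_{n,3}$ one has $g_3 \le g_2 \le g_1$ and $g_3 \le g_4$; and on $m \ge \max\{t_{n,2}, t_{n,3}\}$ one has $g_4 \le g_3 \le g_2 \le g_1$. These are exactly the four asserted cases, while the $\min$, the $\max$, and the ``if defined'' qualifiers absorb the residues of $n$ in which candidates coincide or vanish — for instance $n \equiv 1 \mod 5$, where $f_{1,n} = f_{2,n}$, $t_{n,1}$ is undefined, and the first boundary collapses to $t_{n,2}$.

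The main obstacle is not a single sharp inequality but the uniformity of the two structural claims — the slope ordering and $t_{n,1} \le t_{n,2} \le t_{n,3}$ — across all residues of $n$ modulo $5$ and modulo $6$ and all four products $(G^1,G^2)$. In particular one must check that the small ``remainder'' pieces distinguishing the candidates (a leading $2,3,4$ in $f_{1,n}$, a trailing $7$ or a $(3,5)$ block in $f_{4,n}$, and so on) never disturb the slope ordering, and that every degenerate configuration really does produce an undefined threshold so that the corresponding interval is empty. I would organize this by branching first on the residue of $n$ modulo $30$, which fixes each $f_{i,n}$ as an explicit sequence and each $g_i$ as an explicit affine function; the slope comparison and the threshold ordering then reduce to the finitely many inequalities already recorded in the preceding propositions, after which the four cases follow by transitivity as above.
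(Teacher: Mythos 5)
Your proposal is correct and follows essentially the same route as the paper, whose entire proof of this corollary is the one-line citation of Theorem~\ref{the:f1f4} and Propositions~\ref{pro:tn23}--\ref{pro:tn34}; you merely make explicit the lower-envelope argument (weakly decreasing slopes in $m$ plus ordered thresholds, checked residue by residue) that the paper leaves implicit. One caution: the ordering $t_{n,2}\le t_{n,3}$ cannot literally be read off Table~\ref{tab:tn3} as printed, since the entries $t^{PP}_{n,3}=6$ for $n\equiv 2,3\pmod 6$ would violate it against $t^{PP}_{n,2}=8$; recomputing $3y_{5,m}\ge 2y_{6,m}+y_{3,m}$ (the paper's derivation drops a sign, arriving at $m+\lfloor\frac{m-2}{2}\rfloor\ge 7$ instead of $m-\lfloor\frac{m-2}{2}\rfloor\ge 7$) gives a threshold of $11$, so the verification step you describe would have to correct that table entry rather than quote it, after which the $\min$/$\max$ formulation of the corollary behaves exactly as you argue.
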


\begin{proof}
For $m \geq 4$, the result is consequence of Theorem~\ref{the:f1f4} and Propositions~\ref{pro:tn23} to~\ref{pro:tn34}.
\end{proof}

\section{Conclusion} \label{sec:conclusion}

One can determining the global defensive alliance number of a graph $F = G^1 \circ G^2$ for $G^1 \in \{C_n,P_n\}$ and $G^2 \in \{C_m,P_m\}$ within a constant number of arithmetic operations.

For $n \leq 7$, the answer is obtained directly from Tables~\ref{tab:P2-7Cm} to~\ref{tab:C2-7Pm}. For instance, $\gamma_a(P_5 \circ C_3) = 7$ due Proposition~\ref{pro:p2-7G2} and $\gamma_a(C_5 \circ P_3) = 5$ due Proposition~\ref{cor:c2-7G2}.

For $n \geq 8$, consider as an example $P_{20} \circ C_{15}$. Since $t^{PC}_{2,3} = 13$ (Proposition~\ref{pro:tn23}) and $t^{PC}_{2,3} = 19$ (Proposition~\ref{pro:tn34}), Corollary~\ref{cor:n8m4}, implies that $\gamma_a(P_{20} \circ C_{15}) = f_{3,20} = \valT[P_n]{C_{15}}{([4]5)} = 4 x_{5,15} = 116$.
As another example, consider the graph $C_{20} \circ P_{15}$. Since $t^{CP}_{2,3} = 8$ (Proposition~\ref{pro:tn23}) and $t^{CP}_{2,3}$ is undefined (Proposition~\ref{pro:tn34}), Corollary~\ref{cor:n8m4}, implies that $\gamma_a(C_{20} \circ P_{15}) = f_{4,20} = \valT[C_n]{P_{15}}{(3,5,[2]6)} = y_{4,15} + y_{5,15} + 2 y_{6,15} = 29 + 29 + 2*32 = 122$.

For concluding, we remark that the four examples presented in this section show that the only relation not contained in Corollary~\ref{cor:hierarchy} indeed cannot be stablished because $\gamma_a(P_5 \circ C_3) = 7 > 5 = \gamma_a(C_5 \circ P_3)$ and 
$\gamma_a(P_{20} \circ C_{15}) = 116 < 122 = \gamma_a(C_{20} \circ P_{15})$.


\end{document}